\documentclass[11pt,reqno]{amsart}

\usepackage{comment}
\usepackage[utf8]{inputenc}
\usepackage[english]{babel}
\usepackage{amsmath}
\usepackage{mathtools}
\mathtoolsset{showonlyrefs}
\usepackage{amsfonts,amssymb}
\usepackage{mathrsfs,enumerate,doi}
\usepackage{booktabs}
\usepackage{esint} 
\usepackage{amsthm}
\usepackage{nicefrac}
\usepackage{hyperref}
\usepackage{accents}
\usepackage{epsfig}
\usepackage{graphicx}
\usepackage{csquotes}
\usepackage{psfrag}
\usepackage{graphpap,latexsym,epsf}
\usepackage{xcolor}
\usepackage{geometry}
\newtheorem{theorem}{Theorem}[section]
\newtheorem{corollary}[theorem]{Corollary}
\newtheorem{lemma}[theorem]{Lemma}
\newtheorem{prop}[theorem]{Proposition}
\newtheorem{definition}[theorem]{Definition}

\newtheorem{obs}[theorem]{Remark}

\newtheorem{ackn}{Acknowledgement\ \!\!\!\!\!}

\newcommand{\blu}[1]{{\color{blue}#1}}

\newcommand{\Om}{\Omega}
\newcommand{\R}{\mathbb{R}}
\newcommand{\D}{\mathrm{D}}
\newcommand{\f}{\varphi}
\newcommand{\N}{\nabla}
\newcommand{\e}{\varepsilon}
\newcommand{\pa}{\partial}
\hypersetup{colorlinks=true, linkcolor=blue, filecolor=black, urlcolor=blue,
citecolor=teal}
\usepackage{xcolor}
\usepackage{hyperref}
\definecolor{vgreen}{rgb}{0.1,0.5,0.2}

\title{$p$-Laplace Serrin problem on planar annuli}

\author[V. Agostiniani]{Virginia Agostiniani}
\author[C. Bernardini]{Chiara Bernardini}

\address[V. Agostiniani]{
Department of Mathematics,
University of Trento 
\newline\indent Via Sommarive, 14 - 38123 Povo (Trento)}
\email{\href{mailto:virginia.agostiniani@unitn.it}{virginia.agostiniani@unitn.it}}

\address[C. Bernardini]{
Department of Mathematics,
University of Rome Tor Vergata 
\newline\indent Via della Ricerca Scientifica 1, 00133 Roma}
\email{\href{mailto:bernardini@axp.mat.uniroma2.it}{bernardini@axp.mat.uniroma2.it}}

\begin{document}

\begin{abstract}
We prove a rigidity result for the problem $\Delta_p u=-2$ defined on a ring-shaped domain in $\R^2$, when homogeneous Dirichlet boundary conditions are imposed and $p\in(1,2)$. 
This problem admits a natural physical interpretation, as it describes the steady laminar flow of a generalised Newtonian fluid exhibiting shear-thinning behaviour.
Our symmetry result is obtained assuming that the set of maximum points, denoted by $\Sigma$, is a smooth closed curve, and there are no other critical points outside it; moreover, an appropriate asymptotic expansion of the solution near $\Sigma$ is required. 
Although our approach is inspired by the comparison strategy of \cite{AgoBerBorMaz}, the main ingredients used there are intrinsically linear and have no direct counterparts for the \(p\)-Laplacian. This requires the introduction of new ideas and techniques.
\end{abstract}

\maketitle
\date{}

{
\hypersetup{linkcolor=black}
\tableofcontents
}

%----------------------------------------------------------------

\section{Introduction}

In this paper, we establish symmetry results for solutions to the $p$-Laplacian torsion problem on planar domains. For a fixed $p\in(1,2)$, 
we consider pairs $(\Omega,u)$ satisfying the boundary value problem
\begin{equation}\label{nostro problema}
\begin{cases}
    \Delta_p u=-2, \quad&\text{in }\Omega,\\
    u=0,  & \text{on }\partial\Omega.\
\end{cases}
\end{equation}

We are mainly concerned with the case 
where $\Omega$ is a planar \textit{ring-shaped domain}, that is a bounded domain in $\mathbb{R}^2$ with a smooth boundary consisting of exactly two connected components:
\[
\partial\Omega=\Sigma_{\rm i}\sqcup\Sigma_{\rm o},
\]
where \(\Sigma_{\rm i}\) denotes the inner boundary component and \(\Sigma_{\rm o}\) the outer one.
We recall that problem \eqref{nostro problema} admits an interesting physical interpretation in the context of steady laminar flows of non-Newtonian fluids.
More precisely, starting from the Cauchy equation of motion and assuming an incompressible, steady, laminar flow in a pipe of cross section $\Omega$, the model for a generalized Newtonian fluid with shear-dependent viscosity reduces to the equation
\[
\operatorname{div}\big(\mu(|\nabla u|)\nabla u\big)=c,
\]
where $u$ denotes the vertical component of the velocity field. Choosing $\mu(s)=Ks^{p-2}$, which corresponds to the $p$-Laplacian, the cases $p>2$, $1<p<2$, and $p=2$ describe, respectively, shear-thickening, shear-thinning, and Newtonian fluid behaviour. We refer the reader to Section \ref{sez2} for more details on this derivation.\\

Starting from the celebrated rigidity result for Serrin’s problem in \cite{Serrin}, several works have been devoted to extending his result to more general operators than the classical Laplacian. As a consequence, overdetermined problems for quasilinear operators have been intensively studied over the past few decades. Philippin \cite{Philippin} studied the following quasilinear (nondegenerate) equation
$${\rm div}\big(f(|\nabla u|)\,\nabla u)=-2\qquad\text{in }\Omega$$
imposing
$$u=0, \quad \text{ and }\quad|\nabla u|=c\qquad\text{on }\partial\Omega$$
where $\Omega\subset\R^n$ is a connected bounded domain and $f$ is bounded above and below by positive constants. He proved a rigidity result using a technique first introduced by Weinberger in \cite{Wein}, and exploiting a maximum principle argument together with Rellich’s identity. 
Later Garofalo and Lewis \cite{GarofaloLewis} established the symmetry result for a broader class of (potentially) degenerate equations, under growth conditions on $f$, and imposing `weaker' Neumann boundary conditions which do not require smoothness assumptions on the boundary of the domain. Their result, which includes the $p$-Laplacian as a special case, is obtained by generalizing Weinberger's argument. By contrast, Brock and Henrot \cite{BroHen} adopted a completely different strategy, relying on Steiner symmetrization and domain derivative techniques to investigate an overdetermined problem for a class of quasilinear operators of $p$-Laplacian type defined on convex domains. 
We also refer the interested reader to the work by Fragalà, Gazzola and Kawohl \cite{FrGaKo} (see also Farina and Kawohl \cite{FarinaKaw}) who studied a family of degenerate equations not covered by the above-mentioned papers, and to \cite{CiraoloRoncoroni} for problems on convex cones considering possibly degenerate operators in Euclidean space or space forms.
Concerning symmetry results for domains with cavities, they have already been considered by Alessandrini \cite{Alessandrini} and Payne-Philippin \cite{PhiPay89}, in both cases, they studied a degenerate quasi-linear capacity overdetermined problem, and in particular, the equation treated is 
$${\rm div}\big(f(u,|\nabla u|)\,\nabla u)=0\qquad\text{in }\,\Omega_0\setminus\cup_{k=1}^m\bar{\Om}_k.$$
Along these lines, the non-homogeneous case has been treated by Payne and Philippin in \cite{PaynePhil}; they assume $f$ satisfies a uniform ellipticity condition and impose more restrictive conditions (than ours) on the boundary data.
Very recently, Cao, Wei, and Zhan \cite{cao} considered overdetermined problems involving possibly degenerate elliptic equations of the form 
$$-{\rm div}\left(f(|\nabla u|)\frac{\nabla u}{|\nabla u|}\right)=g(u)$$
on ring-shaped domains $\Om=\Om_0\setminus\overline\Om_1$, under suitable assumptions on $f$ and $g$. They impose a weaker version of the previous overdetermining conditions, which, however, in the case $u\in C^1(\bar\Om)$, reduces to $u=0$, $\partial_\nu u=c_0$ in $\partial\Om_0$, and $u=\eta$, $\partial_\nu u=c_1$ in $\partial\Om_1$. 
In particular, assuming $\eta>0$, $c_0\le0$ and $c_1\ge0$, they prove that, if $u\in C^1(\Om)$ is a weak solution to the problem, then $\Om_0$ and $\Om_1$, must be (possibly non-concentric) balls.
Notice that, in general, their solution $u$ may not be radially symmetric. Their proof relies on continuous Steiner symmetrization together with a suitable approximation argument.
We also refer the reader to the recent work by Figalli and Zhang \cite{fiZh} where they address an overdetermined problem for the anisotropic Laplacian $\Delta_H u ={\rm div}(H(\nabla u) {\rm D}H(\nabla u))$ on a bounded indecomposable set of finite perimeter. Their result provides an anisotropic Serrin-type rigidity theorem for Lipschitz domains and, more generally, for finite-perimeter sets satisfying suitable rectifiability assumptions.
We finally recall the work by Reichel \cite{Reichel} on analogous quasilinear problems in exterior domains, as well as \cite{AleRos, EncPer, Poggesi} for what concerns the $p$-harmonic case on punctured domains. \\

In what follows, we always assume $\Om$ a planar bounded domain with smooth boundary, and $p\in(1,2)$, unless explicitly stated otherwise. Notice that by maximum principle arguments, if $u$ solves \eqref{nostro problema} then $u>0$ in $\Omega$ and the function $u$ achieves its maximum in the interior of the domain. We will use the following notation:
\[
u_{\mathrm{max}}:=\max_{\Omega}{u},\qquad\quad{\rm MAX}(u)=\{x\in\Omega\,|\,u(x)=u_{\mathrm{max}}\}.
\]

We will always assume the following.

\newtheorem*{ip no crit}{Assumption I}
\begin{ip no crit}
The solution $u$ has no critical points in the interior of $\Omega$ other than its maximum points; namely,
\[
{\rm Crit}(u)\setminus {\rm MAX}(u)=\varnothing.
\]
\end{ip no crit}

We stress that Assumption I should be regarded essentially as a working assumption. We expect that it can be removed by means of a suitable \(\varepsilon\)-regularization of the solution, in the sense of DiBenedetto \cite{DiBen}. For the present purposes, however, we retain this assumption in order to develop the theory in a transparent way and to isolate the main ideas in this basic setting. A complete treatment allowing for the presence of additional critical points is deferred to future work.
It is therefore important to keep in mind that Assumption I is of a different nature from the more structural hypotheses introduced next, which enter the statements of our main results.

\begin{definition}\label{ass_Loja}
We say that $u$ satisfies a
\emph{$p$-{\L}ojasiewicz inequality} at ${\rm MAX}(u)$, if there exist a neighbourhood $\mathscr U$ of ${\rm MAX}(u)$ and two constants  $c>0$ and 
$\frac 1p\leq\gamma<\frac2p$ such that
\begin{equation}\label{loj con p}
|\nabla u(x)|\,\geq \,c\,(u_{\rm max}-u(x))^\gamma,
\qquad\mbox{for every }\ x\in\mathscr U.    
\end{equation}
\end{definition}

Observe that when $p=2$ any solution $u$ of \eqref{nostro problema} satisfies \eqref{loj con p}. Indeed, in this case $u$ is real analytic and \eqref{loj con p} reduces to the classical {\L}ojasiewicz Inequality.
Observe as well that, in the range $1<p<2$ considered here, condition \eqref{loj con p} is less restrictive than the classical {\L}ojasiewicz inequality, since it allows for larger exponents.

\begin{obs}\label{scal_inv}
{\rm
If $0\in\Om$ and $\lambda>0$, then $u$ solves \eqref{nostro problema} in $\Om$ if and only if
$\lambda^{\frac p{p-1}} u(x/\lambda)$ solves \eqref{nostro problema} in $\lambda\Om$.
In view of this fact, we define the following model solutions:
$$\mathfrak u_0(x)=\,\Big(\frac{p-1}p\Big)\, \big(1-|x|^{\frac p{p-1}}\big), \quad x\in B_1(0),$$
$$\mathfrak u_0^\lambda(x)=\lambda^{\frac p{p-1}}\mathfrak u_0\left(\frac x{\lambda}\right),\quad x\in B_\lambda(0).$$
In what follows, we will refer to solutions of this form as \textit{p-Serrin solutions}.
Note that the model solution $\mathfrak u_0$ fulfils \eqref{loj con p} with $\vartheta=\frac1p$.
}
\end{obs}

First, we obtain a characterization of $p$-Serrin solutions in terms of the quantity $\max_{\partial\Om}\frac{|\nabla u|^p}{u_{\rm max}}$. 

\begin{theorem}\label{teo2.1NEW}
Let $(\Omega,u)$ be a solution to problem \eqref{nostro problema}. 
Under Assumption {\rm \blu{I}}, suppose that $u$ satisfies a 
$p$-{\L}ojasiewicz inequality at ${\rm MAX}(u)$ and also a $\frac{2-p}{p^2}$-reverse {\L}ojasiewicz inequality at at ${\rm MAX}(u)$, according to Definition \ref{RRLI}.
If $N$ is a connected component of $\Omega\setminus{\rm MAX}(u)$ and 
$$\max_{\partial\Om\cap\overline N}\frac{|\nabla u|^p}{u_{\rm max}}\le\frac{p}{p-1},$$
then $\Om$ is a disk and $u$ coincides, up to translations, with $u_0^\lambda$, for some $\lambda>0$.
\end{theorem}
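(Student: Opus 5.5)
We follow the comparison strategy of \cite{AgoBerBorMaz}, adapted to the $p$-Laplacian. On the connected component $N$ we compare $u$ with the model profile $\mathfrak u_0^\lambda$ through its gradient. For the model, $|\nabla \mathfrak u_0^\lambda|^p = \frac{p}{p-1}\big(\lambda^{\frac p{p-1}}\frac{p-1}{p}-\mathfrak u_0^\lambda\big)$, i.e. $|\nabla \mathfrak u_0^\lambda|^p=\frac p{p-1}\,(u_{\max}-u)$. We therefore introduce the $P$-function
\[
W:=|\nabla u|^p-\frac{p}{p-1}\,(u_{\rm max}-u),
\]
defined on $\overline N$. Equivalently one may compare $|\nabla u|$ with the model gradient expressed in terms of $u$, which leads to the normalized quantity $\frac{|\nabla u|^p}{\frac p{p-1}(u_{\max}-u)}$.

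\emph{Step 1 (elliptic inequality for $W$).} On $N$, Assumption I gives $\nabla u\neq0$, so $u$ is smooth there and the linearized operator $L\varphi=\mathrm{div}\big(|\nabla u|^{p-2}(\mathrm{Id}+(p-2)\nu\otimes\nu)\nabla\varphi\big)$, with $\nu=\nabla u/|\nabla u|$, is uniformly elliptic on compact subsets of $N$. A Bochner-type computation in dimension two should show that $W$ satisfies $LW+\langle b,\nabla W\rangle\ge0$ for a locally bounded drift $b$. The key input is the refined Kato/Cauchy--Schwarz inequality for the Hessian of a $p$-torsion function in the plane, where equality forces $\nabla^2u$ to be proportional to the model Hessian. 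Hence $W$ satisfies the strong maximum principle on $N$, and $W\equiv$ const only when the level sets of $u$ are concentric circles with the model profile.

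\emph{Step 2 (boundary behaviour).} The boundary of $N$ consists of $\partial\Omega\cap\overline N$ and part of ${\rm MAX}(u)$. On $\partial\Omega\cap\overline N$ we have $u=0$, so by hypothesis $W=|\nabla u|^p-\frac p{p-1}u_{\max}\le 0$. The delicate part is approaching ${\rm MAX}(u)$, where $W\to0$ trivially but the operator degenerates. Here we use the two {\L}ojasiewicz hypotheses. The $p$-{\L}ojasiewicz inequality \eqref{loj con p} with $\gamma<2/p$, together with the reverse {\L}ojasiewicz inequality of Definition \ref{RRLI} with exponent $\frac{2-p}{p^2}$, should give two-sided control
\[
|\nabla u|^p\approx (u_{\max}-u)^{\theta}
\]
near ${\rm MAX}(u)$. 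This control has two uses. First, it justifies integrating by parts, or applying a maximum principle on the sublevel region $\{u<u_{\max}-\varepsilon\}\cap N$ and letting $\varepsilon\to0$, with boundary terms on $\{u=u_{\max}-\varepsilon\}$ that vanish in the limit. Second, it excludes a positive supremum of $W$ being attained only asymptotically at ${\rm MAX}(u)$.

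\emph{Step 3 (rigidity).} From Steps 1 and 2, $W\le0$ on $N$. To upgrade this to equality we use an integral identity. Integrating the equation over the region between $\Sigma_{\rm MAX}$ and $\partial\Omega\cap\overline N$ gives $\int_{\{u=t\}}|\nabla u|^{p-1}=2\,|\{u>t\}\cap N|$, modulo the contribution at ${\rm MAX}(u)$, which vanishes by Step 2. We combine this with the coarea formula and the planar isoperimetric inequality for the level sets, in the spirit of a monotonicity argument for $t\mapsto|\{u>t\}\cap N|$. The upper bound $W\le 0$ and the isoperimetric lower bound then force equality everywhere. Consequently $W\equiv0$, and by the equality case in Step 1 the level sets are concentric circles. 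Since ${\rm MAX}(u)$ must then reduce to a single point, $N=\Omega\setminus\{x_0\}$, $\Omega$ is a disk, and $u=\mathfrak u_0^\lambda$ up to translation.

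The main obstacle is Step 2, the analysis near ${\rm MAX}(u)$, where $\nabla u=0$ and the equation degenerates for $p<2$. This is exactly why the exponent constraints are tuned to $\frac{2-p}{p^2}$, and it is where I expect most of the technical work. I would first try to prove that $|\nabla u|^{p-1}|\nabla W|$ integrates to zero on $\{u=u_{\max}-\varepsilon\}$ as $\varepsilon\to0$, using both inequalities.
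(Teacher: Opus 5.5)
\textbf{Main gap: Step 3 cannot turn $W\le0$ into $W\equiv0$.} Granting Steps 1--2, what you obtain is $W\le 0$ on $N$. The paper obtains the same thing, using the weighted function $F=\Psi^{-m_1(p)-\frac p{p-1}}(W-W_0)$, where $W-W_0=|\nabla u|^p-\frac p{p-1}(u_{\rm max}-u)$ is exactly your $W$. The weight is chosen so that the coefficients have the right sign. It is the blow-up of this weight on ${\rm MAX}(u)$ that the $\frac{2-p}{p^2}$-reverse {\L}ojasiewicz inequality controls; for your unweighted $W$ the limit at ${\rm MAX}(u)$ is trivial by continuity of $\nabla u$.

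Step 3 fails because every inequality you invoke points the same way. Set $c=\frac p{p-1}$, $s=u_{\rm max}-t$, $A=|\{u>t\}|$, $L=\mathcal H^1(\{u=t\})$. The bound $W\le0$ gives $\int_{\{u=t\}}|\nabla u|\le 2(cs)^{\frac{2-p}{p}}A$. Cauchy--Schwarz, the coarea formula and $L^2\ge4\pi A$ then give $\frac{dA}{ds}\ge 2\pi(cs)^{-\frac{2-p}{p}}$, i.e.\ $|\{u>t\}|\ge\pi\big(c(u_{\rm max}-t)\big)^{\frac{2(p-1)}{p}}$. So the superlevel sets are \emph{at least} as large as the model ones, no hypothesis supplies the reverse bound, and no equality is forced.

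What is missing is a substitute for the strong maximum principle at a maximum point. That principle closes the argument when $p=2$, but it is unavailable here because the operator degenerates on ${\rm MAX}(u)$. The paper's step in Theorem~\ref{teo p serrin omega}, from $F\le0$ with $F\to0$ on ${\rm MAX}(u)$ to $F\equiv0$, hinges on the same point, and the weak maximum principle alone does not give it.

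One way to supply it, once $W\le0$ holds on all of $\Omega$:
\begin{enumerate}
\item Integrate $\partial_r(u_{\rm max}-u)\le\big(c(u_{\rm max}-u)\big)^{1/p}$ along rays from $x_0\in{\rm MAX}(u)$. This gives $B(x_0,r_o)\subset\Omega$ with $r_o=(cu_{\rm max})^{\frac{p-1}{p}}$.
\item Compare $u$ with $\mathfrak u_0^{\rho}(\cdot-x_0)$ on $B(x_0,\rho)$, where $\rho={\rm dist}(x_0,\partial\Omega)$. This gives $\rho\le r_o$, hence $\rho=r_o$, so $\partial B(x_0,r_o)$ touches $\partial\Omega$ at some point $y$.
\item At $y$, comparison gives $|\nabla u(y)|^p\ge cu_{\rm max}$, while the hypothesis gives the reverse inequality.
\item Hopf's lemma for the linearised equation, which is uniformly elliptic near $y$, together with analyticity on $B(x_0,r_o)\setminus\{x_0\}$, yields $u=\mathfrak u_0^{r_o}(\cdot-x_0)$ and $\Omega=B(x_0,r_o)$.
\end{enumerate}

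\textbf{Second gap: the localisation to $N$.} This is where the paper spends the $p$-{\L}ojasiewicz inequality. The hypothesis controls $|\nabla u|$ only on $\partial\Omega\cap\overline N$. The paper therefore first proves that $\Omega\setminus{\rm MAX}(u)$ is connected, and only then applies the global Theorem~\ref{teo p serrin omega}. The argument (Proposition~\ref{prop2.3new}) runs as follows.
\begin{itemize}
\item On $N$ one has $\mathrm{div}\big(|\nabla u|^{p-2}\nabla u\,(u_{\rm max}-u)^{-\frac{2(p-1)}{p}}\big)=\frac{2(p-1)}{p}(u_{\rm max}-u)^{-\frac{3p-2}{p}}\,W\le0$.
\item Hence $U(t)=(u_{\rm max}-t)^{-\frac{2(p-1)}{p}}\int_{\{u=t\}\cap\overline N}|\nabla u|^{p-1}$ is non-increasing and bounded by $U(0)<\infty$.
\item On the other hand, $|\nabla u|\ge c'(u_{\rm max}-u)^\gamma$ with $\gamma<\frac2p$ forces $U(t)\to+\infty$ if $\mathcal H^1({\rm MAX}(u)\cap\overline N)>0$.
\end{itemize}
The monotonicity you allude to in Step 3 is of this kind, but it yields only an inequality, which the paper uses for connectedness, not rigidity. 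The ``two-sided control $|\nabla u|^p\approx(u_{\rm max}-u)^\theta$'' you attribute to the two hypotheses is not what they give: the exponents differ, with $\gamma\ge\frac1p$ from below and only $\gamma<\frac{2-p}{p^2}<\frac1p$ from above.

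Without connectedness, Step 3 also fails concretely. If ${\rm MAX}(u)\cap\overline N$ contains a curve, part of $\partial(\{u>t\}\cap N)$ lies on ${\rm MAX}(u)$, so the isoperimetric inequality in terms of $\mathcal H^1(\{u=t\}\cap\overline N)$ is not available. Moreover, your conclusion that ${\rm MAX}(u)$ is a single point presupposes the equality case that Step 3 never reaches.
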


\noindent 
Observe that the quantity $|\nabla u|^p/u_{\rm max}$ appearing in the above statement is a scaling-invariant quantity, in view of Remark \ref{scal_inv}.
We also emphasize that the two {\L}ojasiewicz-type inequalities appearing in the statement play distinct roles in the proof. The {\L}ojasiewicz inequality is used to show that $\Omega\setminus \operatorname{MAX}(u)$ is connected (see Proposition \ref{prop2.3new}), whereas the reverse {\L}ojasiewicz inequality is needed to obtain a suitable gradient estimate (see Theorem \ref{teo p serrin omega}). For functions that are $C^2$ across the set of maximum points, such a reverse inequality follows from \cite[Theorem 2.2]{BoChMa}. 
In the $C^{1,\alpha}$ setting considered here, we expect an analogous inequality to hold with a suitable exponent 
$\bar\gamma<1$ (see Definition \ref{RRLI}). This would allow us to remove the reverse {\L}ojasiewicz inequality from the assumptions in the above statement.

Theorem \ref{teo2.1NEW}, which we believe to be of independent interest, will be used in the proof of our second main result, which characterizes annular-type rotationally symmetric solutions to \eqref{nostro problema} with locally constant Neumann data.
These solutions are completely described, up to translations and rescalings, by the following family
$\{\mathfrak u_R\}_{0<R<1}$.
For every fixed $0<R<1$, letting 
$$(\mathfrak u_R)_{\rm max}:=\int\limits_{R}^{1}\left(\rho-\frac{R^2}{\rho}\right)^\frac{1}{p-1}d\rho,$$ 
it can be easily proved that 
there exists a unique $r_i\in(0,R)$ such that
$$\int\limits_{r_i}^{R}\left(\frac{R^2}{\rho}-\rho\right)^\frac{1}{p-1}d\rho=(\mathfrak u_R)_{\rm max}.$$
Then the function
\begin{equation}\label{model_annular_slus}
\mathfrak u_R(x)=\begin{cases}
\displaystyle\int_{r_i}^{|x|}\left(\frac{R^2}{\rho}-\rho\right)^\frac{1}{p-1}d\rho\qquad\quad\qquad &\text{if }r_i<|x|\le R\\
\displaystyle
(\mathfrak u_R)_{\rm max} -\int_{R}^{|x|}\left(\rho-\frac{R^2}{\rho}\right)^\frac{1}{p-1}d\rho &\text{if }R\le|x|<1
\end{cases}
\end{equation}
solves problem \eqref{nostro problema}
in the circular annular domain
\begin{equation}\label{model annular domain}
\Om_R=\left\{x\in\R^2\,\big|\,r_i<|x|<1\right\}.   
\end{equation}
Figure~\ref{fig:annular-profiles} illustrates the profiles of the model annular solutions $\mathfrak u_R$ for some values of $p\in(1,2)$, when \(R=\frac12\). 

\begin{figure}[htbp]
\centering
\includegraphics[width=0.70\textwidth]{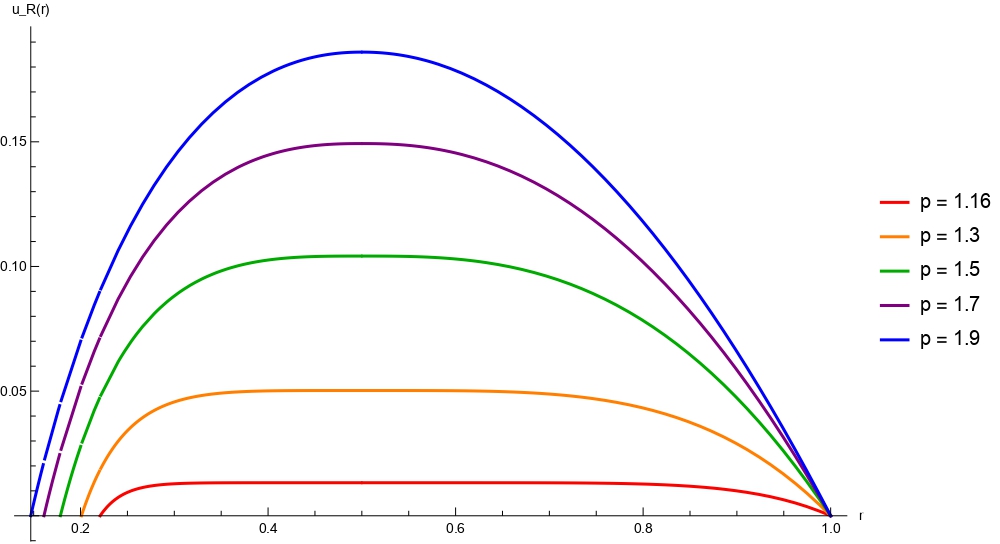}
\caption{\small{Profiles of model annular solutions $\mathfrak u_R$ for some values of $p\in(1,2)$, with $R=\frac12$.}}
\label{fig:annular-profiles}
\end{figure}

\noindent As we will see in Sections \ref{sez stime grad} and \ref{sez6}, a key step in the proof of our results is to establish a suitable gradient estimate, namely, to control the gradient of the solution in terms of an appropriate model comparison solution. The profiles in Figure \ref{fig:annular-profiles} provide some intuition for why this issue becomes increasingly delicate as $p$ approaches $1$: the model solutions develop an increasingly flat region around their maximum.

Analysing the first derivative of $\mathfrak u_R$, it is easy to see that
$\mathfrak u_R\in C^2$ for every $p\in(1,2)$.
Moreover, for every natural number $k\geq3$, we have that $\mathfrak u_R\in C^k$ if $p\in(1,\nicefrac{k}{k-1})$ and $u_R\notin C^k$ if $p\in(\nicefrac{k}{k-1},2)$. At the same time, $u_R\in C^{k-1}\setminus C^k$ if $p=\nicefrac{k}{k-1}$ and $k$ is odd, whereas $u_R\in C^{\infty}$ if $p=\nicefrac{k}{k-1}$ and $k$ is even. That is, for every natural number $k\geq3$, 
\[
u_R\in C^k\mbox{\qquad if and only if\qquad} 1<p<\frac{k}{k-1}\mbox{\quad or\quad}p=\frac{m}{m-1}\mbox{\ for some even $m$}.
\]

Observe that the $p$-Serrin solution arises as the singular limit of the ring-shaped model solutions in the regime $R \to 0^+$. More precisely, the family \(\mathfrak u_R\) converges smoothly to \(\mathfrak u_0\) on every compact subset of
\(B_1(0)\setminus\{0\}\): see Figure~\ref{fig:profili_R_var} for a comparison of the profiles corresponding to different values of $R$.
In particular, the sets ${\rm MAX}(\mathfrak u_R)$ and $\{|x|=r_i\}$ both collapse onto the origin as \(R \to 0^+\), moreover $(\mathfrak u_R)_{\rm max}\to\frac{p-1}{p}$ as $R\to0^+$.

\begin{figure}[htbp]
\centering
\includegraphics[width=0.7\textwidth]{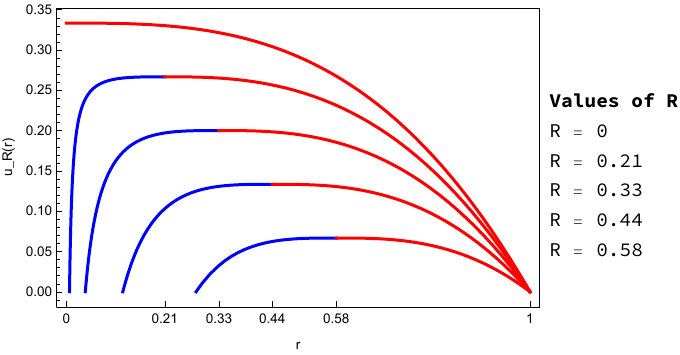}
\caption{\small{Radial profiles of the ring-shaped model solutions 
$\mathfrak u_R$ for \(p=\frac32\) and different values of \(R\). The blue and red portions represent, respectively, the increasing and decreasing branches of each profile. The case \(R=0\), corresponding to the \(p\)-Serrin solution \(\mathfrak u_0\), is included for comparison.}}
\label{fig:profili_R_var}
\end{figure}

Our main rigidity result is the following.

\begin{theorem}\label{TEOREMA rigidita}
Let $\Omega$ be a planar ring-shaped domain, and let $u$ be the solution to problem \eqref{nostro problema} such that 
\[|\nabla u| \quad\mbox{is locally constant on}\quad\partial\Om. \]
Under Assumption {\rm \blu{I}}, assume further that $\Sigma:={\rm MAX}(u)$ is a smooth closed (nontrivial) curve and that the following asymptotic expansion holds for $u$ at $\Sigma$:
\begin{equation}
\label{ass_esp}
u=u_{\rm max}+a(\theta)\,t^\frac{p}{p-1}+b(\theta)\,t^{\frac{p}{p-1}+1}\,+\,o_1 \left(t^{\frac{p}{p-1}+1}\right),
\end{equation}
where $t$ is the distance function from $\Sigma$.

Then, $\Om$ is a circular annular domain and the solution $u$ is rotationally symmetric.
\end{theorem}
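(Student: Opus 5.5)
The plan is to adapt the comparison strategy of \cite{AgoBerBorMaz} to the $p$-Laplacian. Since $\Sigma={\rm MAX}(u)$ is a smooth closed curve in the ring, it separates $\Omega$ into two connected components: $\Omega_{\rm i}$, bounded by $\Sigma_{\rm i}$ and $\Sigma$, and $\Omega_{\rm o}$, bounded by $\Sigma$ and $\Sigma_{\rm o}$. By Assumption I, $u$ has no critical points in either piece.

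\emph{Step 1: rotationally symmetric comparison data.} Integrating $\Delta_p u=-2$ over $\Omega_{\rm i}$ and $\Omega_{\rm o}$, and using $\nabla u=0$ on $\Sigma$, gives
\[
|\nabla u|_{\Sigma_{\rm i}}^{p-1}|\Sigma_{\rm i}|=2|\Omega_{\rm i}|,\qquad |\nabla u|_{\Sigma_{\rm o}}^{p-1}|\Sigma_{\rm o}|=2|\Omega_{\rm o}|.
\]
Here the Neumann data are the constants provided by the hypothesis. After rescaling via Remark \ref{scal_inv}, we choose the model $\mathfrak u_R$ of \eqref{model_annular_slus}, with $R\in(0,1)$ fixed so that $(\mathfrak u_R)_{\rm max}=u_{\rm max}$ and the outer Neumann datum of $\mathfrak u_R$ equals $|\nabla u|_{\Sigma_{\rm o}}$. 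On each side we write $|\nabla \mathfrak u_R|^p=F_\pm(\mathfrak u_R)$ along level sets; this defines a one-variable profile $F_\pm$. We then introduce the pseudo-radial functions $\Psi_\pm:=|\nabla u|^p-F_\pm(u)$ on $\Omega_{\rm o}$ and $\Omega_{\rm i}$.

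\emph{Step 2: maximum principle for $\Psi_\pm$.} The aim is to show that $\Psi_\pm\le0$. Away from $\Sigma$, $u$ is smooth because $\nabla u\neq0$, and the linearized operator $L=\operatorname{div}(|\nabla u|^{p-2}(I+(p-2)\nu\otimes\nu)\nabla\,\cdot)$ is uniformly elliptic on compact subsets. A Bochner-type computation in dimension two, modelled on the one behind Theorem \ref{teo p serrin omega}, should give an elliptic inequality $L\Psi+\langle X,\nabla\Psi\rangle\ge0$ wherever $\Psi\ge0$. On the outer boundary $\Sigma_{\rm o}$ we have $\Psi=0$ by the choice of $R$. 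The expansion \eqref{ass_esp} controls $\Psi$ near $\Sigma$: since
\[
|\nabla u|^p=\big(\tfrac p{p-1}\big)^p|a|^p t^{\frac p{p-1}}+\dots\quad\text{and}\quad u_{\rm max}-u=|a|t^{\frac p{p-1}}+\dots,
\]
the leading terms of $|\nabla u|^p$ and $F(u)\sim\frac p{p-1}2^{\frac1{p-1}}(u_{\rm max}-u)\cdots$ can be compared directly. The second-order coefficient $b(\theta)$ enters through the curvature of $\Sigma$, which is what makes the next-order comparison possible. With boundary values controlled, the maximum principle gives $\Psi_{\rm o}\le0$ in $\Omega_{\rm o}$.

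\emph{Step 3: integral identity and rigidity.} Using the coarea formula with $|\nabla u|\le F^{1/p}(u)$, we compare the area $|\Omega_{\rm o}|$ and the length $|\Sigma|$ with the corresponding quantities for $\mathfrak u_R$. Running the same argument on $\Omega_{\rm i}$, where a Theorem \ref{teo2.1NEW}-type gradient estimate replaces the disk case, yields opposite inequalities for the inner region. These are then combined with the flux identities of Step 1 and the isoperimetric-type relation between $|\Sigma|$ and the curvature of $\Sigma$; the relation $\int_\Sigma\kappa=2\pi$ appears through the coefficient $b$. Together these should force equality, hence $\Psi\equiv0$ on both sides. It follows that $|\nabla u|$ is a function of $u$ alone, the level sets have constant curvature, $\Sigma$ is a circle, and $u$ coincides with a rescaled translate of $\mathfrak u_R$.

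The main obstacle is Step 2 near $\Sigma$. There the equation degenerates, $u$ is only $C^{1,\alpha}$ in general, and $L$ loses ellipticity; the asymptotics \eqref{ass_esp} are exactly what is needed to compute the boundary behaviour of $\Psi$ and its normal derivative there. A secondary difficulty is deriving the differential inequality for $\Psi$, since the linear tools of \cite{AgoBerBorMaz}, such as harmonic conjugates, are unavailable here.
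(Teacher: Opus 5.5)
Your outline has the right overall shape: split along $\Sigma$, bound $|\nabla u|^p$ by a radial model through a maximum principle, turn this into curvature and length bounds, and close with an integral identity. However, the comparison in Step 1 is set up wrongly, and this already breaks the inner region.

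\textbf{The comparison models.} Up to scaling, the family $\mathfrak u_R$ has a single parameter $R$. Both scale-invariant ratios $|\alpha|^p/u_{\rm max}$ and $|\beta|^p/u_{\rm max}$ of a model are functions of $R$. So a model matched to $u_{\rm max}$ and to the outer datum has no reason to reproduce the inner datum $\alpha$. Then $|\nabla u|^p-F_-(u)$ has no sign on $\Sigma_{\rm i}$, and the maximum principle on $\Omega_{\rm i}$ yields nothing.

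You need two independent models, with a priori different values of $R$:
\begin{itemize}
\item an inner one determined by $(u_{\rm max},\alpha)$ via Corollary \ref{lemma a<b caso beta=0}, which applies thanks to the Pohozaev bound $u_{\rm max}<\frac{p-1}{2p}|\alpha|^p$;
\item an outer one determined by $(u_{\rm max},\beta)$ via Corollary \ref{lemma a>b alfa=0}, which exists only if $\frac{p-1}{2p}|\beta|^p<u_{\rm max}<\frac{p-1}{p}|\beta|^p$.
\end{itemize}
The upper bound is not automatic, and your proposal never proves it. This is exactly where Theorem \ref{teo2.1NEW} enters, applied to $N=\Omega_{\rm o}$: if $u_{\rm max}\ge\frac{p-1}{p}|\beta|^p$, then $\Omega$ would be a disk, contradicting ring-shapedness. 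The {\L}ojasiewicz-type hypotheses of that theorem come from \eqref{ass_esp} via Remark \ref{II implica no bnd su p}. So that theorem belongs to the outer region, not to the inner one as in your Step 3.

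\textbf{Step 2: the gradient bound.} The boundary value on $\Sigma$ is the easy part: for $|\nabla u|^p-F(u)$ both terms vanish there. The real difficulty is a differential inequality whose zeroth-order coefficient has the right sign, and this is not automatic. The paper works with $w=W/W_R-1$ in the conformal metric $\Psi^{-2}g_{\R^2}$, where $\Psi=\psi(u)$ is the genuine pseudo-radial function. It uses the $p$-Bochner formula, and in both subproblems it must pass to $\eta(\varphi)\,w$ with a tuned exponent such as $m_1(p)$ to obtain $c\le0$.

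The coefficient $b(\theta)$ is used elsewhere. It yields $V=2t+k\,t^2+o(t^2)$ and $V_R=2t+\big[\frac{k}{2p-1}+\frac{2(p-1)}{(2p-1)R}\big]t^2+o(t^2)$. Then $V\le V_R$ gives the curvature bound, and hence the length bound, on $\Sigma$.

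\textbf{Step 3: closing the argument.} The two sides are compared with different models, so they cannot be ``combined''; each must be closed separately.
\begin{itemize}
\item On $\Omega_{\rm i}$, integrating $-\Delta_p u/(R^2-\Psi^2)$ gives $\int(\text{positive weight})\,(W_R-W)=|\Sigma_{\rm i}|/r_i-|\Sigma|/R$. The right side is $\le0$ by $|\Sigma_{\rm i}|\le2\pi r_i$ (a first-order comparison of $W$ and $W_R$ at $\Sigma_{\rm i}$) and $|\Sigma|\ge2\pi R$. This forces equality.
\item On $\Omega_{\rm o}$, the analogous identity produces $|\Sigma_{\rm o}|/r_o-|\Sigma|/R$, which the length bounds make $\ge0$. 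That is the same sign as the left side, so no rigidity follows.
\end{itemize}
For the outer region the paper instead combines four ingredients: the Pohozaev identity, the flux identity $2|\Omega_{\rm o}|=|\beta|^{p-1}|\Sigma_{\rm o}|$, the isoperimetric bound $|A_{\rm int}|\le R|\Sigma|/2$ (which uses $|\Sigma|\le2\pi R$), and an auxiliary function solving \eqref{ode fi}. Your coarea comparison does not substitute for this.
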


\vspace{2mm}
\noindent In the above statement, by $f=o_1(t^\ell)$ we mean that $\frac{\partial f}{\partial\theta}=o(t^\ell)$, $\frac{\partial f}{\partial t} =o(t^{\ell-1})$, and $\frac{\partial^2 f}{\partial t \partial \theta} =o(t^{\ell-1})$.

\begin{obs}\label{II implica no bnd su p}
\rm{
The proof of Theorem \ref{TEOREMA rigidita} takes advantage of Theorem \ref{teo2.1NEW}, 
whose assumptions are satisfied. 
Indeed, expansion \eqref{ass_esp} implies, in particular, the $p$-{\L}ojasiewicz inequality of Definition \ref{ass_Loja}. This is because 
letting $q:=\frac{p}{p-1}$, since Proposition \ref{lemma7.1} guarantees the non-degeneracy of the leading coefficient \(a\) and in particular that $a(\theta)\leq -c_0<0$ on $\Sigma$, then $(u_{\rm max}-u)\sim t^q$ and 
$|u_t|\sim t^{q-1}$,
as \(t\to0\), uniformly with respect to \(\theta\). This implies that \(u\) satisfies inequality \eqref{loj con p} with $\gamma=\frac1p$. The asymptotic expansion \eqref{ass_esp} also implies that \(u\) satisfies a $\frac1p$-reverse {\L}ojasiewicz inequality and, consequently, also a $\frac{2-p}{p^2}$-reverse {\L}ojasiewicz inequality (see Definition \ref{RRLI}).
}
\end{obs}

\medskip

The main tool to prove our symmetry result is the \textit{comparison geometry technique} introduced in \cite{AgoBorMaz}, where it was employed to characterize rotationally symmetric solutions to the classical torsion problem defined over planar annular domains (see also \cite{AgoBerBorMaz} for the $n$-dimensional case). The same technique also found fruitful applications in the context of general relativity \cite{BoChMa}, in the study of the critical catenoid conjecture \cite{EspMar} and semilinear equations \cite{esFeMa}. To the best of our knowledge, this is the first attempt to extend such a procedure to equations involving quasilinear operators.

In order to start the comparison procedure, we split the problem into two sub-problems as suggested by the following observation. Under the assumptions of Theorem \ref{TEOREMA rigidita}, the curve $\Sigma$ divides the domain $\Omega$ into two ring-shaped domains, more specifically
$$\Omega=\Omega_{\rm i}\sqcup\Sigma\sqcup\Om_{\rm o},$$ 
where $\Omega_{\rm i}$ denotes the inner region of $\Om$ having boundary $\partial\Om_{\rm i}=\Sigma\sqcup\Sigma_{\rm i}$, and $\Om_{\rm o}$ the outer one such that $\partial\Om_{\rm o}=\Sigma\sqcup\Sigma_{\rm o}$ (to keep the presentation concise, we postpone the details about the curve $\Sigma$ to \textit{Remark} \ref{no isl}). 
Moreover, since $|\nabla u|$ is locally constant on $\partial\Om$, without loss of generality, we can assume that $\partial_\nu u=\alpha$ on $\Sigma_{\rm i}$ and $\partial_\nu u=\beta$ on $\Sigma_{\rm o}$, where $\alpha$ and $\beta$ real constants.
As a consequence, the key idea in our argument consists of studying separately the rigidity of the solution in each of these ring-shaped sub-domains.  More precisely, the problem reduces to the study of the following two overdetermined sub-problems
\begin{equation}\label{sist interno}
\begin{cases}
\Delta_pu=-2,\qquad\qquad&\text{in }\Omega_{\rm i}\\
u=0,\quad\hspace{0,29cm}\frac{\partial u}{\partial\nu}=\alpha,&\text{on }\Sigma_{\rm i}\\
u=M,\quad\,\frac{\partial u}{\partial\nu}=0,&\text{on }\Sigma\\
0\,\le\,u\,\le\,M, &\text{in }\Omega_{\rm i}
\end{cases} 
\end{equation}
and
\begin{equation}\label{sist esterno}
\begin{cases}
\Delta_p u=-2,\qquad\qquad&\text{in }\Omega_{\rm o}\\
u=M,\,\,\quad\frac{\partial u}{\partial\nu}=0,&\text{on }\Sigma\\
u=0,\quad\,\hspace{0.28cm}\frac{\partial u}{\partial\nu}=\beta,&\text{on }\Sigma_{\rm o}\\
0\,\le\,u\,\le\,M, &\text{in }\Omega_{\rm o}
\end{cases}    
\end{equation}
where $\nu$ denotes the exterior unit normal to $\Omega_{\rm i}$ of $\Om_{\rm o}$, respectively. Since we are under Assumption {\rm \blu{I}}, throughout this paper, whenever we consider either system \eqref{sist interno} or \eqref{sist esterno}, we always assume that the solution has no critical points in the interior of the domain $\Om_{\rm i}$ or $\Om_{\rm o}$, respectively.  Moreover, we will always assume that $M>0$. This comes from the observation that if $u$ solves \eqref{nostro problema} then $u>0$ in $\Omega$ and the function $u$ achieves its maximum in the interior of the domain, so we are interested in the case when $M=u_{\rm max}>0$.\\

Therefore, given $a,b,\alpha,\beta$ real constants, we are interested in studying symmetry for the pairs $(\Om,u)$, assuming that $\Omega\subset\R^2$ is a ring-shaped domain having smooth boundary, and $u:\Om\to\R$ is a solution to the following overdetermined problem 

\begin{equation}\label{sist a b alpha beta}
\begin{cases}
\Delta_pu=-2,\qquad\qquad&\text{in }\Omega\\
u=a,\quad\frac{\partial u}{\partial\nu}=\alpha,&\text{on }\Gamma_\mathrm{i}\\
u=b,\quad\,\frac{\partial u}{\partial\nu}=\beta,&\text{on }\Gamma_\mathrm{o}.
\end{cases}  
\vspace{1mm}
\end{equation}
where $\nu$ is the exterior unit normal to $\Omega$, $\Gamma_{\rm i}$ is the inner boundary component of $\Om$ and $\Gamma_{\rm o}$ the outer one.
By considering the cases $ a < b$ and $ a> b$ separately, we obtain the following symmetry result, which plays a fundamental role in the proof of Theorem \ref{TEOREMA rigidita}. 
Once again, when considering problem \eqref{sist a b alpha beta}, we always assume that the desired solution does not have critical points inside the domain $\Omega$.

\begin{theorem}\label{teo rigidità a<b e a>b}
Let $(\Omega,u)$ be a solution to problem \eqref{sist a b alpha beta} such that ${\rm Crit}(u)\cap\Om=\varnothing$. Assume either:
\begin{itemize}
    \item [i.] $a<u<b$ in $\Omega$ and $\beta\not=0$;
\end{itemize}
or
\begin{itemize}
    \item [ii.] $b<u<a$ in $\Omega$, $\alpha\not=0$ and $a-b\le\left(\frac{p-1}{p}\right)(|\beta|^p-\alpha^p)$.
\end{itemize}
Then, $\Omega$ is a circular annular domain, and the solution $u$ is rotationally symmetric.
\end{theorem}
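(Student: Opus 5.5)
We follow the comparison geometry strategy of \cite{AgoBorMaz,AgoBerBorMaz}, adapted to the $p$-Laplacian. Since ${\rm Crit}(u)\cap\Om=\varnothing$ and $u$ is constant on each boundary component, the level sets of $u$ are smooth closed curves foliating $\Om$.

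In case i (and symmetrically in case ii) we first fix the model. Among the one-parameter family of rotationally symmetric solutions of $\Delta_p\mathfrak u=-2$ with $\mathfrak u'\neq0$ we select, by scaling (Remark \ref{scal_inv}) and by the shooting parameter $R$ in \eqref{model_annular_slus}, a model $\mathfrak u$ on an annulus with the same boundary values $a<b$ and the same Neumann datum $|\beta|$ on the outer circle. On the level set $\{\mathfrak u=s\}$, $s\in[a,b]$, the model gradient defines a profile $|\nabla\mathfrak u|=\psi(s)$, obtained from the ODE $(\psi^{p-1}\rho)'=-2\rho$ written in the variable $s$. We then introduce the pseudo-radial function $\Psi=\psi\circ u$ on $\Om$ and the comparison quantity
\[
W:=|\nabla u|^p-\psi(u)^p .
\]

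The core step is a gradient estimate $W\le0$ in $\Om$, corresponding to Theorem \ref{teo p serrin omega} in the Serrin case. We plan to prove it via a maximum principle for a degenerate but, away from critical points, uniformly elliptic operator. The operator is the linearisation $L\varphi=\operatorname{div}\big(|\nabla u|^{p-2}(\mathrm{Id}+(p-2)\nu\otimes\nu)\nabla\varphi\big)$ plus first-order drift terms, with $\nu=\nabla u/|\nabla u|$. A Bochner-type identity for the $p$-Laplacian on $|\nabla u|^p$, combined with the refined Kato inequality in dimension two, shows that $W$ satisfies $LW+\langle X,\nabla W\rangle\ge0$ wherever $W$ is near $0$. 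On $\Gamma_{\rm o}$ we have $W=0$ by the choice of the model. On $\Gamma_{\rm i}$ the sign of $W$ must be controlled. In case i we would use $\beta\neq0$ and the divergence theorem: integrating $\Delta_p u=-2$ gives $|\beta|^{p-1}|\Gamma_{\rm o}|-|\alpha|^{p-1}|\Gamma_{\rm i}|=2|\Om|$. In case ii the extra hypothesis $a-b\le\frac{p-1}{p}(|\beta|^p-\alpha^p)$ is exactly what guarantees that the model with the same outer datum reaches the level $b$ before its gradient profile falls below $\alpha$. This hypothesis is the $p$-analogue of the Pohozaev-type bound used for $p=2$.

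Next we promote $W\le0$ to rigidity. We use the area/length monotonicity along level sets: for $s\mapsto\int_{\{u=s\}}|\nabla u|^{p-1}$, the equation gives $\frac{d}{ds}\int_{\{u=s\}}|\nabla u|^{p-1}=-2\int_{\{u=s\}}|\nabla u|^{-1}$. We couple this with the isoperimetric inequality and Gauss–Bonnet, $\int_{\{u=s\}}\kappa=2\pi$, where the curvature of level sets is $\kappa=-(\Delta_p u+ (p-1)|\nabla u|^{p-2}\nabla^2u(\nu,\nu))/|\nabla u|^{p-1}$. This yields a monotone quantity $F(s)$, built from $\int_{\{u=s\}}\Psi^{\,\cdot}\,\kappa$, which is constant for the model. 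The gradient bound forces $F$ to be monotone in one direction, while the boundary data and the Neumann information force equality at both ends. Hence all inequalities are equalities: $W\equiv0$, each level set has constant curvature, $\Om$ is a round annulus and $u$ is radial. Constant curvature also forces the level circles to be concentric, since $|\nabla u|$ is constant on each of them.

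The main obstacle is the maximum principle step. First, the strong maximum principle for $L$ requires uniform ellipticity, which fails near $\Gamma_{\rm i}$ in case i only if $\alpha=0$. Second, the drift $X$ contains $\psi'$, which blows up for $p<2$ where $\psi\to0$. We would first try to work with $W$ replaced by $\Phi(|\nabla u|)-\Phi(\psi(u))$ with $\Phi(t)=t^{p}$ versus $t^{2(p-1)}$, choosing the power that cancels the worst term of the $p$-Bochner formula. We would handle possible interior positive maxima by a Hopf-type argument at $\Gamma_{\rm i}$ using $\alpha\neq0$ or $\beta\neq0$.
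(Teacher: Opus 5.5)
Your overall architecture (radial comparison model, pseudo-radial function, a $p$-Bochner maximum principle giving a gradient comparison, then rigidity) matches the paper's. However, the model is chosen wrongly, and this breaks both the gradient estimate and the rigidity step.

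\textbf{The model.} You match the model only to $a$, $b$ and the outer datum $\beta$. This leaves one parameter of the radial family free. More importantly, it gives no equality on $\Gamma_{\rm i}$, where your $|\nabla u|^p-\psi(u)^p$ equals the constant $|\alpha|^p-\psi(a)^p$. The flux identity cannot fix the sign of this constant, because it involves the unknown quantities $|\Gamma_{\rm i}|$, $|\Gamma_{\rm o}|$, $|\Om|$. (As you wrote it, it is the case-ii identity; in case i it reads $|\alpha|^{p-1}|\Gamma_{\rm i}|-\beta^{p-1}|\Gamma_{\rm o}|=2|\Om|$.)

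The missing idea is that the radial family has three parameters $R,r_i,r_o$. These suffice to match all four data $a,b,\alpha,\beta$ at once (Lemmas \ref{lemma a<b} and \ref{lemma a>b}). The existence of this fully matched model is exactly where the hypotheses enter:
\begin{itemize}
\item in case i, the Pohozaev-type condition $2a+\frac{p-1}{p}|\alpha|^p>2b+\frac{p-1}{p}\beta^p$ of Proposition \ref{prop cond pohozaev} is used;
\item in case ii, the hypothesis $a-b\le\frac{p-1}{p}(|\beta|^p-\alpha^p)$ is used; it is the condition $f(0)\le0$ in the proof of Lemma \ref{lemma a>b}.
\end{itemize}
With this model, $W=W_R$ on both components of $\partial\Om$. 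The maximum principle then yields $W\le W_R$ with no boundary sign to control. In case i it is applied to $w=W/W_R-1$ in the metric $g=\Psi^{-2}g_{\R^2}$. In case ii it is applied to a weighted $\eta(\varphi)w$, the weight being needed to make the zeroth-order coefficient nonpositive. Note that $\beta\neq0$ is used only to guarantee $R>r_o$, so that $W_R>0$ on $\Gamma_{\rm o}$ and $w$ is defined there; it has nothing to do with controlling $\Gamma_{\rm i}$.

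\textbf{The rigidity step.} It inherits the same problem. Moreover, your monotone quantity $F(s)$ is never actually defined, so ``equality at both ends'' cannot be checked. The paper's argument uses $W=W_R$ on $\partial\Om$ twice.

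First, equality holds at boundary points and $W\le W_R$ inside. Comparing first-order expansions along the normal then gives $k\le-1/r_i$ on $\Gamma_{\rm i}$ and $k\le1/r_o$ on $\Gamma_{\rm o}$ (curvature with respect to the exterior normal of $\Om$). Hence $|\Gamma_{\rm i}|\le2\pi r_i$ and $|\Gamma_{\rm o}|\ge2\pi r_o$.

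Second, testing $-\Delta_p u=2$ against $(R^2-\Psi^2)^{-1}$ and integrating by parts gives
\[
\int_\Om\frac{2\,\Psi^{\frac{p}{p-1}}}{(R^2-\Psi^2)^{\frac{2p-1}{p-1}}}\,(W_R-W)\,d\mu=\frac{|\Gamma_{\rm i}|}{r_i}-\frac{|\Gamma_{\rm o}|}{r_o}.
\]
The boundary terms are explicit only because $W=W_R$ there. The left side is $\ge0$ and the right side is $\le0$. Hence $W\equiv W_R$, and the equality case of the gradient estimate gives the symmetry.

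With your model, unless equality happens to hold on $\Gamma_{\rm i}$, two things can go wrong:
\begin{itemize}
\item if $W>W_R$ there, you get no gradient estimate at all;
\item if $W<W_R$ there, the strict inequality carries no information on normal derivatives.
\end{itemize}
Either way you get no length bound for $\Gamma_{\rm i}$ and no explicit inner boundary term, so the chain of inequalities does not close.
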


\noindent The cases where $\beta=0$ or $\alpha=0$ are addressed in Theorem \ref{teo rigidità caso beta nullo}. 
The case $p=2$ was treated in \cite{borghini}, and the present work extends both the technique developed there and the corresponding result to the range $p\in(1,2)$. We also point out that it should be possible to obtain similar results by suitably adapting Sirakov's work \cite{Sirakov}, which investigates a similar problem for a broader class of elliptic equations, including the $\varepsilon$-regularised $p$-Laplacian.

We emphasize that Theorem \ref{teo rigidità a<b e a>b} addresses the case of nonvanishing Neumann data and is therefore not sufficient, by itself, to prove Theorem \ref{TEOREMA rigidita}, for which the degenerate cases $\beta=0$ or $\alpha=0$ also need to be considered. These cases are covered by Theorem \ref{teo rigidità caso beta nullo}. We have nevertheless chosen to state Theorem \ref{teo rigidità a<b e a>b} in this form, without including the case of vanishing Neumann data, in order to highlight that, once such data are excluded, the symmetry result can be obtained without assuming any extra asymptotic expansion near the maximum points, in contrast with Theorem \ref{teo rigidità caso beta nullo}.\\

Our main goal is to compare the gradient of $u$ with the gradient of a suitably chosen ring-shaped model solution $(\Om_R,u_R)$, to finally obtain a sharp and rigid upper bound for the former in terms of the latter (see Theorems \ref{stime gradiente} and \ref{stime gradiente esterne}). 
First, we need to associate a solution to problem \eqref{sist a b alpha beta} with an appropriate ring-shaped model solution to compare with. Therefore, we show that for any choice of parameters $M$ and $\alpha$, satisfying suitable necessary conditions (see Corollaries \ref{cor cond poho} and \ref{lemma a<b caso beta=0}), there exists a pair $(\Om_R, u_R)$, such that
\begin{equation*}
u_R(x)=\int_{r_i}^{|x|}\left(\frac{R^2}{\rho}-\rho\right) ^\frac{1}{p-1}d\rho,
\end{equation*}
over the domain
\begin{equation*}
\Omega_R=\{x\in\R^2\,|\,r_i<|x|<R\},
\end{equation*}
solves system \eqref{sist interno} and acts as model inner solution. A similar construction can be done for problem \eqref{sist esterno}.  
Once we have constructed the model, we introduce the so-called \textit{pseudo-radial function} $\Psi$, that is, a function that mimics the behaviour of the radial coordinate, and coincides with $|x|$ if and only if the solution $(\Om,u)$ coincides with the model one. In turn, we can rephrase our problem in terms of the conformal metric $g=\frac{1}{\Psi^2}g_{\R^2}$. 
This equivalent reformulation of our problem proves to be quite useful in comparing the gradient of $u$ with the gradient of $u_R$, and in particular to obtain key gradient estimates (see Theorems \ref{stime gradiente}  and \ref{stime gradiente beta=0}). The main tool to obtain this result is the $p$-Bochner formula provided in \cite{Valtorta}, and a suitable maximum principle argument. The gradient estimates are then used to derive curvature bounds on the inner and outer boundary components of the domain $\Omega$, which in turn yield quantitative bounds on their lengths. Together with the rigidity statement associated with the equality case in the gradient estimates, these length bounds are then used in the final step of the proof to establish the rotational symmetry of the solution (see the concluding part of the proof of Theorem \ref{teo rigidità caso beta nullo}, Case 1, and Theorem \ref{teo rigidità caso beta nullo}, Case 2).
We also need length bounds for the curve $\Sigma$, which are again obtained from suitable curvature bounds. A fundamental tool in deriving these curvature estimates is the asymptotic expansion of $|\nabla u|^{p-1}$ and $|\dot u_R|^{p-1}$ near $\Sigma$. It is precisely at this stage that the asymptotic expansion of $u$ at $\Sigma$, given in \eqref{ass_esp}, becomes crucial to our argument (see Propositions \ref{lemma7.1} and \ref{prop stima VR}).\\

The paper is organized as follows. The fluid-dynamical interpretation of our problem is described in Section \ref{sez2}. In Section \ref{sez3}, we establish a Pohozaev-type identity, which will serve as a key tool to obtain some necessary conditions on the constants $M,\,a,\,b,\,\alpha,\,\beta$ such that a solution to problem \eqref{sist a b alpha beta} does exist. Section \ref{sez4} is devoted to introducing explicit ring-shaped model solutions for our problem and the comparison scheme. Section \ref{sez stime grad} contains the core gradient estimates, while Section \ref{sez6} contains a characterization of $p$-Serrin solutions in terms of the quantity $\max_{\partial\Om}\frac{|\nabla u|^p}{u_{\rm max}}$, so in particular the proof of Theorem \ref{teo2.1NEW}. In Sections \ref{sez7} and \ref{sez8}, we derive sharp length bounds for the boundary components of our domain, and for the curve $\Sigma$, respectively. Finally, Section \ref{sez9} is devoted to the proof of the rigidity results, that is Theorem \ref{teo rigidità a<b e a>b} and Theorem \ref{TEOREMA rigidita}. Throughout the paper, $|\cdot|$ will denote the Euclidean norm in $\R^2$.

%------------------------------------------------------------------

\section{Physical interpretation}\label{sez2}

In this section, we provide a description of the physical interpretation of our problem. In order to do that, let us first recall the Cauchy equation of motion, which is the fundamental balance law that expresses the conservation of linear momentum in continuum mechanics. It is valid for any continuum medium regardless of its  molecular nature, and reads as follows
\begin{equation}\label{cauchy eq}
\rho \dot v=\rho f+{\rm div }T
\end{equation}
where $\rho$ is the mass density, $f=f(t,X)$ is the external body force per unit mass, $v=v(t,X)$ is the velocity vector field, $\dot v$ represents the material derivative of the spatial vector field $v$ (that is $\dot v=\frac{\partial v}{\partial t}+(D_X v)v$), $T$ is the stress tensor (it is the constitutive law specifying $T$ that encodes the type of medium that one considers), and ${\rm div}$ is w.r.t. the spatial variable $X$.

By exploiting mass conservation ($\frac{\partial\rho}{\partial t}+{\rm div}(\rho v)=0$) and using the identity
\[
{\rm div}(\rho v\otimes v)=v{\rm div}(\rho v)+\rho (D_X v)v,
\]
equation \eqref{cauchy eq} can be rewritten as
\begin{equation}\label{cauchy eq 2}
\frac{\partial}{\partial t}(\rho v)+{\rm div}(\rho\,v\otimes v)=\rho f+{\rm div}T.
\end{equation}
We recall that for a  \textit{perfect fluid}, that is for fluids that do not exert appreciable tangential stresses, the constitutive equation for $T$ is 
\[T=-{\rm p}I,
\]
where ${\rm p}$ is the pressure field.
For a general viscous fluid, we recall from \cite{SerrinBook} that Stokes' principle holds, that it can be stated in form of postulates, and that such postulates lead to the following form of the stress tensor:
\[
T=\alpha I+\beta D+\gamma D^2,
\]
where $D:=\frac{1}{2}({\rm D}v+{\rm D}v^T)$ is the deformation tensor and $\alpha$, $\beta$ and $\gamma$ are scalar functions of the principal invariants of $D$.
From now on, let us assume that the fluid is  \emph{incompressible} (the volume of any fluid element is preserved), namely that 
\[
{\rm div}v=0.
\]
We also recall, en passant, that for a \textit{Newtonian fluid}
the stress tensor is an affine function of $D$, namely the \textit{Cauchy-Poisson} law holds:
\[
T=-{\rm p}I+\mu D,
\]
where $\mu$ is the \emph{dynamic viscosity}.
This law, coupled with \eqref{cauchy eq 2} yields the Navier-Stokes equation
\[
\frac{\partial}{\partial t}(\rho v)+{\rm div}(\rho\,v\otimes v)
=
\rho f-\nabla p+2\mu{\rm div}D.
\]
Among the \emph{non-Newtonian} fluids, we now consider the \emph{shear-dependent viscosity} fluids (also called \emph{generalised Newtonian} fluids, see, e.g., \cite{Diening}), namely those fluids such that 
\begin{equation}
\label{T_const}
T=-{\rm p}I+2\mu\big(|Dv|\big)\,D.
\end{equation}
Hence, unlike in the classical Newtonian setting, the dynamic viscosity of a
generalized Newtonian fluid may depend on the local shear rate $|Dv|$. For a
classical reference on the mathematical theory of viscous incompressible
flows, see \cite{Olga}. 
Let us go back to the Cauchy equation of motion with the constitutive assumption \eqref{T_const}. Supposing that the flow is taking place along a pipe of cross section $\Omega\ni X=(x,y)$, that such flow is steady and laminar (that is $v=\big(0,0,u(x,y)\big)$), and that $\rho(0,X)=\rho^*$, then from incompressibility and conservation of mass it follows that $\rho(t,X)=\rho^*$ $\forall t$, and \eqref{cauchy eq 2} reduces to
\[
f-\frac{\nabla {\rm p}}{\rho^*}+\frac{1}{\rho^*}\begin{pmatrix} 0\\ 0\\ \frac{1}{2}{\rm div}_{x,y}(\mu\nabla u) \end{pmatrix}	=0.
\]
If we finally assume 
\[
f=\begin{pmatrix} 0\\ 0\\ f_3=const.\end{pmatrix},
\]
we end up with
\begin{equation}
\label{modellino}
{\rm div}(\mu \nabla u)= c,
\end{equation}
where 
$\mu=\mu(|\nabla u|)$. 
Now, if we suppose that
\[
\mu(s)=K s^{p-2},
\qquad s=|\nabla u|,
\]
where \(K>0\) is a consistency parameter, the derived model \eqref{modellino}
coincides with the $p$-Laplace equation that is under consideration in the present paper.

When \(p>2\), then the dynamic viscosity increases as the shear rate increases.
This corresponds to \emph{shear-thickening} behaviour: the fluid becomes
more resistant to deformation under stronger shearing. A concentrated
mixture of cornstarch and water is a familiar example, since it flows
relatively easily under slow deformation but can become temporarily
stiff or almost solid-like when rapidly deformed. Regularity up to the
boundary for related full incompressible Navier-Stokes systems with
shear-dependent viscosity in the shear-thickening regime is studied in
\cite{veiga}.

When \(1<p<2\), then the dynamic viscosity decreases as the shear rate increases.
This corresponds to \emph{shear-thinning} behaviour: the fluid becomes
easier to deform under stronger shearing. Many paints exhibit this
behaviour, becoming less viscous while being brushed or rolled, which
allows them to spread easily, while remaining more viscous when nearly
at rest. Regularity up to the boundary
for related full incompressible systems in the shear-thinning regime is
investigated in \cite{Veiga0}.

Finally, when \(p=2\), the dynamic viscosity is independent of the shear
rate, since
$\mu(s)=K$.
Consequently, the nonlinear constitutive equation reduces, after normalisation, \[
\Delta u=c,
\]
which corresponds to Newtonian behaviour.

%-------------------------------------------------------------------

\section{Pohozaev-type identity}\label{sez3}

In this section, we derive a suitable Pohozaev-type identity for solutions to our problem. We then use such a relation to obtain some necessary conditions on the constants $M,\,a,\,b,\,\alpha,\,\beta$ such that a solution to problem \eqref{sist a b alpha beta} does exist.

\begin{theorem}
Let $\Omega$ be a bounded smooth domain in $\R^2$ and $p\in(1,+\infty)$. Assume that $u:\Omega\to\R$, $u\in C^2(\Omega)$  satisfies $\Delta_p u=-2$ in $\Omega$ and $u$ is locally constant on the boundary of the domain $\Omega$. The following identity holds
\begin{equation}\label{POHOZAEV}
\int_{\partial\Omega}\left[2u+\left(\frac{p-1}p\right)|\nabla u|^p\right] \langle X,\nu\rangle\,{\rm d}\sigma=
\int_\Omega \left(4u-\frac{2-p}p|\nabla u|^p\right){\rm d}\mu   
\end{equation}
where $\nu$ is the exterior unit normal to $\partial\Omega$ and $X:\Omega\to\R^2$ is the position vector $X(x)=(x_1,x_2)$.
\end{theorem}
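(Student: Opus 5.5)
Our approach is the standard Pohozaev/Rellich argument: we test the equation against the multiplier $\langle X,\nabla u\rangle$ and against $u$, integrate by parts, and then use that $u$ is locally constant on $\partial\Omega$. On $\partial\Omega$ this gives $\nabla u=\partial_\nu u\,\nu$, hence $\langle X,\nabla u\rangle=\partial_\nu u\,\langle X,\nu\rangle$ and $|\nabla u|=|\partial_\nu u|$. Note that $|\nabla u|^{p-2}\nabla u$ is merely $C^1$ near critical points when $p<2$, but the vector fields below are continuous with integrable divergence. If needed, we justify the divergence theorem by excising small neighbourhoods of ${\rm Crit}(u)$, or by using the $C^{1,\alpha}$ regularity together with the weak formulation.

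\emph{Step 1 (Rellich-type identity).} Set $V:=|\nabla u|^{p-2}\nabla u$. We compute pointwise
\[
{\rm div}\big(\langle X,\nabla u\rangle V\big)=\langle X,\nabla u\rangle\Delta_p u+|\nabla u|^p+|\nabla u|^{p-2}\,\D^2u(X,\nabla u).
\]
We also compute
\[
{\rm div}\big(\tfrac1p|\nabla u|^pX\big)=\tfrac{2}{p}|\nabla u|^p+|\nabla u|^{p-2}\,\D^2u(\nabla u,X),
\]
using ${\rm div}X=2$ in the plane. Subtracting the two gives
\[
{\rm div}\Big(\langle X,\nabla u\rangle V-\tfrac1p|\nabla u|^pX\Big)=-2\langle X,\nabla u\rangle+\tfrac{p-2}{p}|\nabla u|^p .
\]
For the transport term we write $-2\langle X,\nabla u\rangle=-2\,{\rm div}(uX)+4u$.

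\emph{Step 2 (integration).} We integrate over $\Omega$. The boundary term from the first divergence is
\[
\int_{\partial\Omega}\Big(|\partial_\nu u|^{p}-\tfrac1p|\nabla u|^p\Big)\langle X,\nu\rangle\,{\rm d}\sigma=\int_{\partial\Omega}\tfrac{p-1}{p}|\nabla u|^p\langle X,\nu\rangle\,{\rm d}\sigma,
\]
because $\langle X,\nabla u\rangle\langle V,\nu\rangle=|\partial_\nu u|^p\langle X,\nu\rangle$. The term $-2\,{\rm div}(uX)$ contributes $-\int_{\partial\Omega}2u\langle X,\nu\rangle$. Moving it to the left-hand side yields
\[
\int_{\partial\Omega}\Big[2u+\tfrac{p-1}p|\nabla u|^p\Big]\langle X,\nu\rangle=\int_\Omega\Big(4u-\tfrac{2-p}{p}|\nabla u|^p\Big),
\]
which is exactly \eqref{POHOZAEV}.

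\emph{Main obstacle.} The algebra is routine; the only delicate point is justifying the integration by parts. The field $\langle X,\nabla u\rangle V$ is only continuous at critical points when $p<2$, and the Hessian term may fail to be integrable there. We handle this in one of two ways. The first is to work on $\Omega\setminus\overline{B_\varepsilon({\rm Crit}(u))}$, where the assumption $u\in C^2(\Omega)$ gives a classical computation, and then let $\varepsilon\to0$; the boundary fluxes vanish because the integrands are bounded by $C|\nabla u|^{p}|X|$, which tends to $0$ near critical points. The second, if the critical set is not small, is to apply the identity to the regularised operator ${\rm div}((\varepsilon+|\nabla u|^2)^{(p-2)/2}\nabla u)$ and pass to the limit.
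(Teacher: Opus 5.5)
Your proof is correct and is essentially the paper's argument: the paper specialises the Pucci--Serrin identity with $f(u,v)=\frac{|v|^p}{p}-2u$, $g\equiv0$, $X(x)=x$, and its vector field $fX-\langle X,\nabla u\rangle\,|\nabla u|^{p-2}\nabla u$ is exactly minus your combined field $\langle X,\nabla u\rangle V-\frac1p|\nabla u|^pX+2uX$; the boundary term is then reduced the same way, using that $\nabla u$ is normal to $\partial\Omega$. Your direct derivation and your use of $\nabla u=\partial_\nu u\,\nu$ are slightly cleaner than the paper's $\nu=-\nabla u/|\nabla u|$, which presumes a sign and a nonvanishing gradient, and the paper does not address the justification of the integration by parts near critical points at all.
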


\proof
Let us consider $f=f(x,u,v):\Omega\times\R\times\R^n \to\R$, $g:\Omega\to\R$ and $X:\Omega\to\R^n$ sufficiently regular and assume that $u:\Omega\to\R$, $u\in C^2(\Omega)$ satisfies
\begin{equation}\label{EL}
\mathrm{div}\Big({\rm D}_v f\,(x,u,\nabla u)\Big)\,=\,\partial_u f(x,u,\nabla u),\qquad x\in\Omega.    
\end{equation}
In \cite{PucciSerrin}, the following identity is derived
\begin{equation}\label{Pucci_Serrin}
\begin{split}
{\rm div}\Big(f(x,u,&\nabla u)X-\langle X,\nabla u\rangle\,
{\rm D}_v f(x,u,\nabla u)-g \, u{\rm D}_vf(x,u,\nabla u)\Big)\\
&=\,
f(x,u,\nabla u)\,{\rm div}X+\langle X,{\rm D}_x f(x,u,\nabla u)\rangle-\langle {\rm D}X\,{\rm D}_v f(x,u,\nabla u),\nabla u\rangle\\
&\quad-u\langle{\rm D}g,{\rm D}_vf(x,u,\nabla u)\rangle
-g\Big(\langle \nabla u,{\rm D}_v f(x,u,\nabla u)\rangle
+u\,\partial_uf(x,u,\nabla u)\Big),
\end{split}    
\end{equation}

If we suppose that
$$g\equiv0,\qquad\mbox{and}\qquad f(x,u,v)\,=\,f(u,v),$$
then \eqref{Pucci_Serrin} becomes
$${\rm div}\Big(f(u,\nabla u)X-\left\langle X,\nabla u\right\rangle
{\rm D}_v f(u,\nabla u)\Big)=f(u,\nabla u){\rm div}X-\left \langle {\rm D}X\,{\rm D}_vf(u,\nabla u),\nabla u\right\rangle.$$
and integrating over $\Omega$ we get
\begin{equation}\label{pucci ser integ}
\begin{aligned}
\int_{\partial\Omega}\big\langle f(u,\nabla u)X-\left\langle X,\nabla u\right\rangle {\rm D}_v &f(u,\nabla u)\,,\,\nu\big\rangle\,{\rm d}\sigma\\
&=\int_\Omega f(u,\nabla u){\rm div}X-\left \langle {\rm D}X\,{\rm D}_v f(u,\nabla u),\nabla u\right\rangle{\rm d}\mu.   
\end{aligned}
\end{equation}
Assuming also that
$$n=2, \qquad X(x)=x\qquad\mbox{and}\qquad
f(u,v)=\frac{|v|^p}p-2u,$$
then ${\rm D}_v f=|v|^{p-2}v$ and $\partial_u f=-2$, so the condition \eqref{EL} for $u$ becomes
$$\mathrm{div}\big(|\nabla u|^{p-2} \nabla u\big)\,=-2,\qquad x\in\Omega$$ 
that is 
$$\Delta_p u=-2.$$
Moreover, if $u$ is locally constant on $\partial\Omega$,
the outward unit normal $\nu$ at $\partial\Omega$ coincides with
$-\nabla u/|\nabla u|$. Putting together all the previous assumptions, the left-hand side of \eqref{pucci ser integ} becomes
$$\int_{\partial\Omega} \big\langle f(u,\nabla u)X-\langle X,\nabla u\rangle {\rm D}_vf(u,\nabla u)\,,\,\nu\big\rangle{\rm d}\sigma \,=\, \int_{\partial\Omega}\left[ \left(\frac{1-p}p\right)|\nabla u|^p-2u \right] \langle X,\nu\rangle{\rm d}\sigma$$
while the right-hand side is
$$\int_\Omega f(u,\nabla u){\rm div}X-\left\langle {\rm D}X \,{\rm D}_vf(u,\nabla u),\nabla u\right\rangle{\rm d}\mu
\,=\,\int_\Omega \left(\frac{2-p}p|\nabla u|^p-4u\right){\rm d}\mu$$
this proves our identity.
\endproof

Exploiting the previous Pohozaev identity, we obtain the following relations between the boundary values $a,\,b,\,\alpha,\,\beta$.

\begin{prop}\label{prop cond pohozaev}
Let $p\in(1,2)$ and $(\Omega,u)$ be a solution to problem \eqref{sist a b alpha beta}. The following inequalities hold.
\begin{itemize}
    \item [i.] If $a<b$ and $a\le u\le b$ in $\Omega$, then $\alpha<0$, $\beta\ge0$ and $2a+\frac{p-1}{p}|\alpha|^p> 2b+\frac{p-1}{p}\beta^p$.
    \item [ii.] If $a>b$ and $b\le u\le a$ in $\Omega$, then $\alpha\ge0$, $\beta<0$ and $2a+\frac{p-1}{p}\alpha^p> 2b+\frac{p-1}{p}|\beta|^p$.
\end{itemize}
\end{prop}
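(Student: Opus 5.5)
We prove case i in detail; case ii follows by the same argument with the roles of the two boundary components exchanged. Throughout we use that $\Delta_p u=-2$, that $u$ is constant on each of $\Gamma_{\rm i}$ and $\Gamma_{\rm o}$, and that $\nu$ is the exterior normal, so $\partial_\nu u=\pm|\nabla u|$ on each component.

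\emph{Signs of $\alpha,\beta$.} Assume $a<b$ and $a\le u\le b$. On $\Gamma_{\rm o}$ we have $u=b=\max u$, hence $\partial_\nu u\ge 0$, that is $\beta\ge0$. On $\Gamma_{\rm i}$ the minimum $a$ is attained. Since $\Delta_p u=-2<0$, $u$ is $p$-superharmonic, and the Hopf lemma for the $p$-Laplacian (Vázquez) gives $\partial_\nu u<0$ there, so $\alpha<0$. This step only needs the strong minimum principle for supersolutions.

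\emph{Integrating the equation.} Set $\Omega^*=$ the bounded region enclosed by $\Gamma_{\rm i}$. By the divergence theorem,
\[
\int_{\Gamma_{\rm o}}|\nabla u|^{p-2}\partial_\nu u+\int_{\Gamma_{\rm i}}|\nabla u|^{p-2}\partial_\nu u=-2|\Omega|,
\]
so $\beta^{p-1}L_{\rm o}-|\alpha|^{p-1}L_{\rm i}=-2|\Omega|$. We use this flux identity alongside the Pohozaev identity.

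\emph{Pohozaev step.} Apply \eqref{POHOZAEV}. On $\Gamma_{\rm o}$ the integrand equals $c_{\rm o}:=2b+\frac{p-1}p\beta^p$, and $\int_{\Gamma_{\rm o}}\langle X,\nu\rangle=2|\Omega\cup\overline{\Omega^*}|$. On $\Gamma_{\rm i}$ the integrand equals $c_{\rm i}:=2a+\frac{p-1}p|\alpha|^p$, and since $\nu$ points into $\Omega^*$, $\int_{\Gamma_{\rm i}}\langle X,\nu\rangle=-2|\Omega^*|$. The left-hand side is therefore
\[
2c_{\rm o}\big(|\Omega|+|\Omega^*|\big)-2c_{\rm i}|\Omega^*|=2|\Omega|c_{\rm o}-2|\Omega^*|(c_{\rm i}-c_{\rm o}).
\]
The goal is $c_{\rm i}>c_{\rm o}$, so we need an upper bound on $\int_\Omega\big(4u-\frac{2-p}p|\nabla u|^p\big)$ in terms of $c_{\rm o}$. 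Write $4u=4b-4(b-u)$. Integrating $\mathrm{div}\big((b-u)|\nabla u|^{p-2}\nabla u\big)=-|\nabla u|^p-2(b-u)$ gives
\[
\int_\Omega|\nabla u|^p+2\int_\Omega(b-u)=(b-a)|\alpha|^{p-1}L_{\rm i},
\]
which is the standard energy identity used to eliminate $\int(b-u)$. Combining this with the flux identity, all terms except $|\Omega^*|(c_{\rm i}-c_{\rm o})$ should reduce to expressions of definite sign, roughly
\[
|\Omega^*|(c_{\rm i}-c_{\rm o})=\kappa\int_\Omega|\nabla u|^p+(\text{boundary terms}),
\]
with $\kappa>0$ for $p<2$. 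Strictness then comes from $\nabla u\not\equiv0$.

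\emph{Main obstacle.} The difficulty is this bookkeeping: the volume term $4b|\Omega|$ must cancel against $2|\Omega|c_{\rm o}$ using the flux relation, and this cancellation is not automatic. If a direct combination fails, we plan to repeat the computation with $X$ replaced by $X-x_0$ for $x_0\in\Omega^*$. Identity \eqref{POHOZAEV} is unchanged under this translation, so a residual term can be positive only if it is already zero or sign-determined. If needed, we would also use Cauchy--Schwarz-type bounds of the form $L_{\rm i}^2\ge4\pi|\Omega^*|$ to control the $L_{\rm i}$ term.
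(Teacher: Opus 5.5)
Your signs for $\alpha,\beta$ (boundary maximum on $\Gamma_{\rm o}$, Hopf lemma at the boundary minimum on $\Gamma_{\rm i}$) and your evaluation of the boundary side of the Pohozaev identity, $2|\Omega|c_{\rm o}-2|\Omega^*|(c_{\rm i}-c_{\rm o})$, are correct and match the paper. The gap is the final step. You never establish $c_{\rm i}>c_{\rm o}$, and the route you sketch would not deliver it. Eliminating $\int_\Omega(b-u)$ with your energy identity gives
\[
2|\Omega^*|(c_{\rm i}-c_{\rm o})=2(b-a)|\alpha|^{p-1}L_{\rm i}-\frac{3p-2}{p}\int_\Omega|\nabla u|^p\,d\mu+\frac{2(p-1)}{p}\beta^p|\Omega| .
\]
So the coefficient of $\int_\Omega|\nabla u|^p$ is $-\frac{3p-2}{p}<0$, not a positive $\kappa$. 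You would then have to balance a positive boundary term against a negative bulk term. None of your fallback tools settles that sign:
\begin{itemize}
\item the flux identity;
\item translating $X$, which changes nothing because $\int_{\Gamma}\nu\,d\sigma=0$ on each closed boundary component;
\item an isoperimetric bound.
\end{itemize}

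No elimination is needed, and the ``obstacle'' you describe does not exist. The term $2|\Omega|c_{\rm o}=4b|\Omega|+\frac{2(p-1)}{p}\beta^p|\Omega|$ already contains the volume term $4b|\Omega|$ exactly. Rearranging the identity you already wrote gives
\[
2|\Omega^*|(c_{\rm i}-c_{\rm o})=\int_\Omega\Big(4(b-u)+\frac{2-p}{p}|\nabla u|^p+\frac{2(p-1)}{p}\beta^p\Big)\,d\mu .
\]
Each term of the integrand is nonnegative, since $u\le b$, $p<2$ and $\beta\ge0$. Moreover $b-u>0$ on a nonempty open set, because $u=a<b$ on $\Gamma_{\rm i}$. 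Hence $c_{\rm i}>c_{\rm o}$. This is exactly the paper's argument, which isolates $|A_{\rm int}|$.

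For case ii, with the same definitions of $c_{\rm i},c_{\rm o}$, isolate the outer area instead. Write $|\Omega^*|=|A|-|\Omega|$, where $A$ is the region bounded by $\Gamma_{\rm o}$. This gives
\[
2|A|(c_{\rm i}-c_{\rm o})=\int_\Omega\Big(4(a-u)+\frac{2-p}{p}|\nabla u|^p+\frac{2(p-1)}{p}\alpha^p\Big)\,d\mu>0 .
\]
The flux and energy identities play no role.
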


\proof
If $a<b$, then the solution $u$ decreases when approaching the inner boundary, and it increases when approaching the outer one, from which the signs for $\alpha$ and $\beta$ follow immediately. 
Actually, in order to rule out the case $\alpha=0$ (and $\beta=0$ in case $ii.$) we exploit the Hopf Lemma for the $p$-laplacian \cite[Proposition 3.2.1]{Tolk} (we can apply it since $u\in C^{1,\alpha}(\bar\Omega)$, see Remark \ref{regolarita soluz} below). Under the boundary conditions in \eqref{sist a b alpha beta}, the Pohozaev identity \eqref{POHOZAEV} becomes
$$\int\limits_{\Omega}\left(4u-\frac{2-p}{p}|\nabla u|^p\right)d\mu= \left(2a+\frac{p-1}{p}|\alpha|^p\right) \int\limits_{\Gamma_ \mathrm{i}}\langle X,\nu\rangle \,d \sigma+\left(2b+\frac{p-1}{p}|\beta|^p\right)\int\limits_{\Gamma_\mathrm{o}} \langle X,\nu\rangle\,d\sigma$$
and from the divergence theorem (recall ${\rm div}X=2$), we get
$$\int\limits_{\Omega}\left(4u-\frac{2-p}{p}|\nabla u|^p\right)d\mu=
-2\left(2a+\frac{p-1}{p}|\alpha|^p\right)|A_\mathrm{int}|+ 2\left(2b+\frac{p-1}{p}|\beta|^p\right)|A|$$
where $A,\,A_\mathrm{int}\subset\mathbb{R}^2$ are the two nonempty simply connected bounded domains such that $\Omega=A\setminus\overline{A_\mathrm{int}}$, furthermore $\Gamma_\mathrm{o}=\partial A$ and $\Gamma_\mathrm{i}=\partial A_\mathrm{int}$ are simple closed smooth curves. In particular, we have that
$|\Omega|=|A|-|A_\mathrm{int}|$. \\
\textit{Case i.} 
Isolating $|A_{\rm int}|$ it holds
$$\int\limits_{\Omega}\left(4(u-b)-\frac{2-p}{p}|\nabla u|^p-2\frac{p-1}{p}\beta^p\right)d\mu= \left(4b+2\frac{p-1}{p}\beta^p-4a-2\frac{p-1}{p}|\alpha|^p\right) |A_\mathrm{int}|,$$
by assumption, the LHS of the previous equality is negative, hence 
$2b+\frac{p-1}{p}|\beta|^p-2a-\frac{p-1}{p}|\alpha|^p< 0$.\\
\textit{Case ii.} Analogously, isolating $|A|$ it holds
$$\int\limits_{\Omega}\left(4(u-a)-\frac{2-p}{p}|\nabla u|^p-2\frac{p-1}{p}\alpha^p\right)d\mu=\left(4b+2\frac{p-1}{p}|\beta|^p-4a-2\frac{p-1}{p}\alpha^p\right)|A|,$$
we conclude as before.
\endproof

\begin{obs}
\label{regolarita soluz}
{\rm 
We collect here some regularity facts for solutions of
\begin{equation}\label{p-lapl=-2}
\Delta_p u:=\operatorname{div}\bigl(|\nabla u|^{p-2}\nabla u\bigr)=-2,
\end{equation}
in a smooth bounded domain $\Omega\subset\mathbb{R}^2$, with $1<p<2$.

By classical regularity results (see, e.g., \cite{Morrey}), any solution of \eqref{p-lapl=-2} is real analytic in the set
$\{x\in\Omega:\nabla u(x)\neq 0\}$.
At critical points, however, the operator becomes singular and analyticity may fail; in fact, solutions need not even be $C^2$ there.

Standard regularity theory (see \cite[Theorem 2]{DiBen} or \cite[Proposition 3.7]{Tolk}) yields
$u\in C^{1,\alpha}_{\rm loc}(\Omega)$,
for some $\alpha\in(0,1)$, with $\alpha\to0$ as $p\to1^+$. Boundary regularity for bounded weak solutions of both Dirichlet and Neumann problems was established by Lieberman. In particular, if $u$ solves \eqref{sist interno} or \eqref{sist esterno}, then $u\in C^{1,\alpha}(\overline{\Omega})$, for some $\alpha\in(0,1)$; see \cite[Theorem 1]{Lieberman}.

We also recall that the critical points of nonconstant $p$-harmonic functions in the plane are isolated; see \cite{Manfredi}. Finally, for solutions of
$\Delta_p u=f$,
with $f\in L^\infty$ and $p>2$, an optimal Hölder regularity result is available in \cite{AraTei}.
}
\end{obs}

For later use, we state in the following corollary the necessary conditions to have a solution to problem \eqref{sist interno} and to problem \eqref{sist esterno}, respectively. We skip the proof since it follows readily from Proposition \ref{prop cond pohozaev}.

\begin{corollary}\label{cor cond poho}
Assume that $p\in(1,2)$ and $M>0$.
Let $(\Omega,u)$ be a solution to problem \eqref{sist interno} such that $0\le u\le M$ in $\Omega$.
Then,
$$\alpha<0\qquad\text{and}\qquad  M<\frac{p-1}{2p}|\alpha|^p.$$ 
On the other hand, if  $(\Omega,u)$ is a solution to problem \eqref{sist esterno}. Then, 
$$\beta<0 \qquad\text{and}\qquad M> \frac{p-1}{2p}|\beta|^p$$
\end{corollary}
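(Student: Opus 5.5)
The plan is to specialise Proposition \ref{prop cond pohozaev} to the two sub-problems, reading off the constants $a,b,\alpha,\beta$ of problem \eqref{sist a b alpha beta} in each case.

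\emph{Inner problem \eqref{sist interno}.} Here the inner boundary component is $\Gamma_{\rm i}=\Sigma_{\rm i}$, with $u=0$ and $\partial_\nu u=\alpha$. The outer component is $\Gamma_{\rm o}=\Sigma$, with $u=M$ and $\partial_\nu u=0$. So we are in problem \eqref{sist a b alpha beta} with $a=0$, $b=M>0$, $\beta=0$, and $a\le u\le b$. Case i of Proposition \ref{prop cond pohozaev} gives $\alpha<0$. Its strict inequality becomes
\[
\tfrac{p-1}{p}|\alpha|^p>2M ,
\]
which is exactly $M<\frac{p-1}{2p}|\alpha|^p$.

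\emph{Outer problem \eqref{sist esterno}.} Now the inner component is $\Gamma_{\rm i}=\Sigma$, with $u=M$ and $\partial_\nu u=0$. The outer component is $\Gamma_{\rm o}=\Sigma_{\rm o}$, with $u=0$ and $\partial_\nu u=\beta$. Thus $a=M>b=0$, the Neumann datum on $\Gamma_{\rm i}$ is $0$ (compatible with the conclusion $\alpha\ge0$), and $b\le u\le a$. Case ii yields $\beta<0$; this sign comes from the Hopf lemma at $\Sigma_{\rm o}$, where $u$ attains its minimum. The strict inequality becomes
\[
2M>\tfrac{p-1}{p}|\beta|^p ,
\]
that is, $M>\frac{p-1}{2p}|\beta|^p$.

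The only point needing care, and the main (mild) obstacle, is justifying the Pohozaev identity \eqref{POHOZAEV}, which was stated for $u\in C^2$. Near $\Sigma$ we have $\nabla u=0$, and $u$ is only $C^{1,\alpha}$ there. I would apply the identity on the approximating domains $\{u<M-\varepsilon\}$ (inner case), respectively their outer analogue. These are bounded by $\Sigma_{\rm i}$ or $\Sigma_{\rm o}$ together with a regular level curve of $u$. Such curves are smooth, since by Assumption I there are no critical points off $\Sigma$, so $u$ is analytic there by Remark \ref{regolarita soluz}, and $u$ is locally constant on their boundary. The boundary contribution on the level curve is
\[
\Bigl(2(M-\varepsilon)+\tfrac{p-1}{p}|\nabla u|^p\Bigr)\int\langle X,\nu\rangle\,d\sigma .
\]
As $\varepsilon\to0$, it converges to the term with datum $(M,0)$. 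This uses $u\in C^{1,\alpha}(\overline{\Omega})$, so that $|\nabla u|\to0$ uniformly near $\Sigma$, together with the fact that the enclosed areas converge. The interior integrals converge by dominated convergence.

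The strictness of the final inequalities then follows exactly as in the proof of Proposition \ref{prop cond pohozaev}, since the left-hand integrand there is strictly negative on a set of positive measure.
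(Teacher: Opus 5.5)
Your proposal is correct and follows the paper's route exactly. The paper omits the proof, saying it follows readily from Proposition~\ref{prop cond pohozaev}, and your specialisation $(a,b,\alpha,\beta)=(0,M,\alpha,0)$ for \eqref{sist interno} and $(a,b,\alpha,\beta)=(M,0,0,\beta)$ for \eqref{sist esterno} is precisely that argument. One slip in your extra approximation step (which the paper does not carry out): $|\nabla u|^p$ is not constant on $\{u=M-\varepsilon\}$, so it cannot be pulled out of the boundary integral; its contribution still vanishes, because $\sup_{\{u=M-\varepsilon\}}|\nabla u|\to0$ while the flux $\int_{\{u=M-\varepsilon\}}|\nabla u|^{p-1}\,d\sigma=|\alpha|^{p-1}|\Sigma_{\rm i}|-2|\{u<M-\varepsilon\}|$ stays bounded by the divergence theorem (analogously with $\beta$ and $\Sigma_{\rm o}$ in the outer case).
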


%------------------------------------------------------------------

\section{Preliminary results}\label{sez4}

\subsection{Model solutions for our problem}

First of all, let us write explicitly radial solutions to 
$$\Delta_p u=-2$$
over open ring-shaped domains $\Omega$ in $\R^2$.  For future convenience, we will distinguish between the cases where ring-shaped model solutions are strictly increasing or strictly decreasing in $\Omega$.\\

\textbf{Case 1.} We look for the pairs $(\Om,u)$, where $u=u(r)$ is a radial solution to $\Delta_p u=-2$ such that $u'(r)>0$ in the interior of $\Omega$, and the domain $\Om$ is a circular annulus in $\R^2$ of inner radius $r_i$ and outer radius $r_o$ (where $r_o>r_i>0$). 
It follows that we have to solve 
\[\frac{1}{r}\frac{d}{dr}\left(r(u')^{p-1}\right)= -2\qquad \text{for } r\in(r_i,r_o).
\]
Denoting by $w(r):=\left(u'(r)\right)^{p-1}$ we get the ODE
$$w'+\frac{1}{r}w=-2,$$
hence the solution is 
$$w(r)=\frac{R^2}{r}-r,\qquad\text{for }r\in(r_i,r_o)$$
where $R$ is a suitable constant such that $R^2\ge r_o^2$. 
Hence, the family of increasing rotationally symmetric solutions to $\Delta_p u=-2$, can be parametrized by the parameter $R$ such that $|R|\ge r_o$ and  it has the following form
\begin{equation}\label{sol mod cresc}
u_R(x)=u_R(r_i)+\int_{r_i}^{|x|}\left(\frac{R^2}{\rho}-\rho\right)^\frac{1}{p-1}d\rho
\end{equation}
over the domain
\begin{equation}\label{dom mod cresc}
\Omega_R=\{x\in\R^2\,|\,r_i<|x|<r_o\}.
\end{equation}
We will refer to the couple $(\Omega_R,u_R)$ as an \textit{inner model solution}. Notice that under this assumption on $R$ the solution $u_R$ is monotonically increasing inside the circular annulus $\Omega_R$ and it can have critical points only on the outer boundary component corresponding to $|x|=r_o$ when taking $R=r_o$.\\

\textbf{Case 2.} Arguing as in the previous case, we look for radial solutions to $\Delta_p u=-2$ over the circular annulus $\Om=\{x\in\R^2\,|\, r_i<|x|< r_o\}$ such that $u'(r)<0$. 
It follows that we have to solve 
$$\mathrm{div}\left((-u')^{p-2}\,u'\,\frac{\partial}{\partial r}\right)=-2\qquad \text{for } r\in(r_i,r_o).$$
Denoting by $w(r):=\left(-u'(r)\right)^{p-1}$, the previous becomes
$$w'+\frac{1}{r}w=2$$
hence 
$$w(r)=r+\frac{c_1}{r},\qquad\text{for }r\in(r_i,r_o)$$
where $c_1$ is a real negative constant such that $r+\frac{c_1}{r}>0$. For future convenience, we parametrize the family of decreasing rotationally symmetric solutions to $\Delta_p u=-2$ by the parameter $R>0$ such that $R^2<r_i^2$ in the following way
\begin{equation}\label{sol mod decr}
u_R(x)=u_R(r_i)-\int_{r_i}^{|x|}\left(\rho-\frac{R^2}{\rho}\right) ^\frac{1}{p-1}d\rho
\end{equation}
over the domain
\begin{equation}\label{dom mod decr}
\Omega_R=\{x\in\R^2\,|\,r_i<|x|<r_o\}.
\end{equation}
In this case, we will refer to the couple $(\Omega_R,u_R)$ as an \textit{outer model solution}.

\subsection{Comparison scheme}

To set up the comparison argument, we have to prove that for any suitable choice of the parameters $a,b,\alpha,\beta$ (according to the necessary conditions in Proposition \ref{prop cond pohozaev}) there exists a couple of the form \eqref{sol mod cresc}-\eqref{dom mod cresc} or of the form \eqref{sol mod decr}-\eqref{dom mod decr}, which solves problem \eqref{sist a b alpha beta}. The same argument applies to problem \eqref{sist interno} and problem \eqref{sist esterno}.

\begin{lemma}\label{lemma a<b}
Let us consider the boundary value problem \eqref{sist a b alpha beta} where we assume that $a<b$, $\alpha<0$, $\beta\ge0$ and 
$\left(\frac{p-1}{2p}\right)(|\alpha|^p-\beta^p)>b-a$.
Then, there exist unique $R,r_i,r_o$ real values such that $0<r_i<r_o\le R$ 
and the function 
$$u(x)=a+\int_{r_i}^{|x|} \left(\frac{R^2}{\rho}-\rho\right) ^\frac{1}{p-1}d\rho$$
solves problem \eqref{sist a b alpha beta} in the circular annular domain $\Omega=\{x\in\R^2\,|\,r_i<|x|<r_o\}$.
\end{lemma}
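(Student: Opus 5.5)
Since the candidate $u$ is radial, with $u'(r)=(R^2/r-r)^{1/p-1}\ge0$ for $r\le R$, it satisfies $\Delta_p u=-2$ on any annulus $\{r_i<|x|<r_o\}$ with $r_o\le R$ (Case 1 above). So the task reduces to matching the four boundary data with the three unknowns $R,r_i,r_o$; the free additive constant has already been fixed by $u(r_i)=a$. The exterior normal is $-\partial_r$ on the inner circle and $\partial_r$ on the outer one. Hence the conditions are
\[
(\tfrac{R^2}{r_i}-r_i)^{\frac1{p-1}}=|\alpha|,\qquad (\tfrac{R^2}{r_o}-r_o)^{\frac1{p-1}}=\beta,\qquad \int_{r_i}^{r_o}\Big(\tfrac{R^2}{\rho}-\rho\Big)^{\frac1{p-1}}d\rho=b-a .
\]
Setting $A:=|\alpha|^{p-1}>0$ and $B:=\beta^{p-1}\ge0$, the first two equations read $r^2+Ar-R^2=0$ and $r^2+Br-R^2=0$. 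For each $R>0$ they have the unique positive roots
\[
r_i(R)=\tfrac{-A+\sqrt{A^2+4R^2}}{2},\qquad r_o(R)=\tfrac{-B+\sqrt{B^2+4R^2}}{2}.
\]
The map $A\mapsto r$ is decreasing, so $A>B$ gives $0<r_i<r_o\le R$, with equality $r_o=R$ exactly when $\beta=0$. Note that $A>B$ follows from the hypothesis $|\alpha|^p-\beta^p>\frac{2p}{p-1}(b-a)>0$.

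It remains to solve the single scalar equation $F(R)=b-a$, where
\[
F(R):=\int_{r_i(R)}^{r_o(R)}\Big(\tfrac{R^2}{\rho}-\rho\Big)^{\frac1{p-1}}d\rho .
\]
The cleanest route is the substitution $w=R^2/\rho-\rho$, which runs from $A$ down to $B$ and has $dw=-(R^2/\rho^2+1)\,d\rho$. Then $F(R)=\int_B^A w^{1/(p-1)}\,\frac{d\rho}{dw}\,dw$ with $\rho(w)=\frac{-w+\sqrt{w^2+4R^2}}{2}$. Differentiating this expression gives $-\frac{d\rho}{dw}=\frac12\big(1-\frac{w}{\sqrt{w^2+4R^2}}\big)$, so
\[
F(R)=\frac12\int_B^A w^{\frac1{p-1}}\Big(1-\frac{w}{\sqrt{w^2+4R^2}}\Big)dw .
\]
The integrand is continuous in $R$ and strictly increasing in $R$, so $F$ is continuous and strictly increasing on $(0,\infty)$. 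This gives uniqueness.

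For existence, the limits follow by monotone convergence. As $R\to0^+$, $F(R)\to0$. As $R\to\infty$, $F(R)\to\frac12\int_B^A w^{1/(p-1)}dw=\frac{p-1}{2p}(A^{p/(p-1)}-B^{p/(p-1)})=\frac{p-1}{2p}(|\alpha|^p-\beta^p)$. By hypothesis $b-a$ lies strictly between these two limits, so the intermediate value theorem yields a unique $R$; consistently, this limiting value is exactly the Pohozaev bound of Proposition \ref{prop cond pohozaev}. Uniqueness of the triple follows because $r_i,r_o$ are uniquely determined by $R$. Finally, the condition $u(r_o)=b$ holds by construction, and $u'>0$ on $(r_i,r_o)$.

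The main obstacle I expect is establishing the monotonicity of $F$. If one differentiates directly in $R$, the moving endpoints produce competing terms. The change of variables to the fixed interval $[B,A]$ is the step that makes monotonicity transparent, so I would carry it out first.
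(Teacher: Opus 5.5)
Your proof is correct and complete. It has the same overall structure as the paper's: solve the two Neumann conditions for $r_i(R)$ and $r_o(R)$, reduce to one scalar equation in $R$, and conclude by the intermediate value theorem plus strict monotonicity. The difference is where you use the substitution $w=R^2/\rho-\rho$, which the paper also uses.

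The paper keeps the integral over the moving interval $[r_i(t),r_o(t)]$ and treats each step separately:
\begin{itemize}
\item for the limit as $t\to0^+$, it bounds the integrand by $t^{2/(p-1)}\rho^{-1/(p-1)}$;
\item for the limit as $t\to+\infty$, it shifts $\rho=t+\tau$, Taylor-expands the endpoints, and applies dominated convergence;
\item for uniqueness, it substitutes only after differentiating and uses an integration-by-parts identity to obtain $f'(t)=2t\int_0^{|\alpha|^{p-1}}s^{\frac p{p-1}}(s^2+4t^2)^{-3/2}\,ds>0$.
\end{itemize}
It writes out the uniqueness step only in the special case $\beta=0$, $a=0$, and leaves the general case to ``minor modifications''.

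By substituting in $F$ itself, you put all the $R$-dependence into an integrand on the fixed interval $[B,A]$ that is pointwise increasing in $R$. Continuity, strict monotonicity and both limits then follow at once, for every $\beta\ge0$. Differentiating your representation gives $F'(R)=2R\int_B^A w^{\frac p{p-1}}(w^2+4R^2)^{-3/2}\,dw$, which for $B=0$ is exactly the paper's formula. So your route is shorter, and it supplies the general uniqueness argument the paper omits.

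Two small slips. The intermediate line should read $F(R)=\int_A^B w^{1/(p-1)}\frac{d\rho}{dw}\,dw$, or equivalently $\int_B^A$ with $-\frac{d\rho}{dw}$; your final formula is correct. Also, the exponent $1/p-1$ in your first sentence should be $\frac{1}{p-1}$.
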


\proof 
Since by assumption $a<b$, we are looking for pairs of the form \eqref{sol mod cresc}-\eqref{dom mod cresc}, for some $R>0$. By imposing the boundary conditions, we get 
\begin{equation}
\label{sistema:slu_rot}
\begin{cases}
u(r_i)=a\\
b=a+\int\limits_{r_i}^{r_o}\left(\frac{R^2}{\rho}-\rho\right)^\frac{1}{p-1}d\rho\\
r_i^2+|\alpha|^{p-1}r_i-R^2=0\\
r_o^2+\beta^{p-1}r_o-R^2=0    
\end{cases}
\end{equation}
From the third and the fourth equation, since we look for positive values for $r_i$ and $r_o$, we get
\begin{equation}\label{r_int}
    r_i=\frac{-|\alpha|^{p-1}+\sqrt{\alpha^{2p-2}+4R^2}}{2}.
\end{equation}
\begin{equation}\label{r_out}
    r_o=\frac{-\beta^{p-1}+\sqrt{\beta^{2p-2}+4R^2}}{2}.
\end{equation}
It follows immediately that requiring $r_i<r_o$ is equivalent to asking for $\beta<|\alpha|$, which is true by assumption (recall that $\left(\frac{p-1}{2p}\right)(|\alpha|^p-\beta^p)>b-a$ since $b-a>0$). From \eqref{r_out} we also get that $r_o\le R$. 

Let us consider now the function $f:(0,+\infty)\to\R$ defined as follows
$$f(t):=\int\limits_{r_i(t)}^{r_o(t)}\left(\frac{t^2}{\rho}-\rho\right)^\frac{1}{p-1}d\rho+a-b$$
where $r_i(t)=\frac{1}{2}\left(-|\alpha|^{p-1}+\sqrt{\alpha^{2p-2}+4t^2}\right)$ and $r_o(t)=\frac{1}{2}\left(-\beta^{p-1}+\sqrt{\beta^{2p-2}+4t^2}\right)$. In order to conclude, we have to prove that $\exists\,R>0$ such that $f(R)=0$. Since the function $f$ is continuous, it is sufficient to prove that 
$$\lim_{t\to0^+}f(t)<0\qquad\text{and} \qquad \lim_{t\to+\infty}f(t)>0.$$
First, we note that 
$$\lim_{t\to0^+}f(t)=a-b<0.$$
Indeed, it holds
$$0\le\int\limits_{r_i(t)}^{r_o(t)}\left(\frac{t^2}{\rho}-\rho\right) ^\frac{1}{p-1}d\rho
\le t^\frac{2}{p-1}\int\limits_{r_i(t)}^{r_o(t)}\rho^{-\frac{1}{p-1}}
=\left(\frac{p-1}{2-p}\right)
t^2\left(\frac{1}{\beta^{p-2}}-\frac{1}{|\alpha|^{p-2}}\right) (1+o(1)),$$
which readily implies the above limit.
For what concerns the limit of $f$ as $t\to+\infty$, by changing variables and computing the Taylor expansions of $r_i$ and $r_o$, we obtain the following
$$\int\limits_{r_i(t)}^{r_o(t)}\left(\frac{t^2}{\rho}-\rho\right)^\frac{1}{p-1}d\rho= \int\limits_{\tau_1(t)}^{\tau_2(t)}\left(\frac{t^2}{t+\tau}-(t+\tau)\right)^\frac{1}{p-1}d\tau= 
\int\limits_{\tau_1(t)}^{\tau_2(t)}(-\tau)^\frac{1}{p-1}\left(\frac{2t+\tau}{t+\tau}\right)^\frac{1}{p-1}d\tau$$
where $\tau_1(t)=-\frac{|\alpha|^{p-1}}{2}+\frac{|\alpha|^{2p-2}}{8t}+o\left(\frac{1}{t}\right)$ and $\tau_2(t)=-\frac{\beta^{p-1}}{2}+\frac{\beta^{2p-2}}{8t}+o\left(\frac{1}{t}\right)$ as $t\to+\infty$.
By dominated convergence, it follows that 
$$\int\limits_{\tau_1(t)}^{\tau_2(t)}(-\tau)^\frac{1}{p-1}\left(\frac{2t+\tau}{t+\tau}\right)^\frac{1}{p-1}d\tau
\xrightarrow{t\to+\infty}\int\limits_{-\frac{|\alpha|^{p-1}}{2}} ^{-\frac{|\beta|^{p-1}}{2}}2^\frac{1}{p-1}(-\tau)^\frac{1}{p-1}\,d\tau=\,\frac{p-1}{p}\left(\frac{|\alpha|^p-\beta^p}{2}\right),$$
hence 
$$\lim\limits_{t\to+\infty}f(t)=\frac{p-1}{2p}\left( |\alpha|^p-\beta^p\right)+a-b\,>0$$
by assumption. We conclude that there exists $R>0$ such that $f(R)=0$.
We provide the proof of the uniqueness only in the simplified case where  $r_o=R$, $a=0$, $b=M$, $\beta=0$, the general case follows the same argument with minor modifications.
We compute
\begin{align}
f'(t)
&=
\frac{2t}{p-1}
\int_{r_i(t)}^t
\frac{1}{\rho}
\left(\frac{t^2}{\rho}-\rho\right)^{\frac{2-p}{p-1}}
d\rho
-
\frac{2t}{\sqrt{|\alpha|^{2p-2}+4t^2}}
\left(\frac{t^2}{r(t)}-r(t)\right)^{\frac{1}{p-1}}
\nonumber\\
&=
2t\left(
\frac1{p-1}
\int_{r(t)}^t
\frac{1}{\rho}
\left(\frac{t^2}{\rho}-\rho\right)^{\frac{2-p}{p-1}}
\,d\rho
-
\frac{|\alpha|}{\sqrt{|\alpha|^{2p-2}+4t^2}},
\right)
\label{per_unic}
\end{align}
where in the second identity we have used the third equation in \eqref{sistema:slu_rot}.
Now, performing the change of variable
$
s=\frac{t^2}{\rho}-\rho
$
in the integral term appearing in \eqref{per_unic}, we get
\begin{equation}
\label{id_deri_cresc}
\int_{r_i(t)}^t
\frac1{\rho}
\left(\frac{t^2}{\rho}-\rho\right)^{\frac{2-p}{p-1}}
d\rho
=
\int_0^{|\alpha|^{p-1}}
\!\!\frac{s^{\frac{2-p}{p-1}}}
{\sqrt{s^2+4t^2}}
ds.
\end{equation}
Also, not that
\[
\frac{d}{ds}
\left(
\frac{s^{\frac1{p-1}}}{\sqrt{s^2+4t^2}}
\right)
=
\frac1{p-1}
\frac{s^{\frac{2-p}{p-1}}}{\sqrt{s^2+4t^2}}
-
\frac{s^{\frac p{p-1}}}{(s^2+4t^2)^{3/2}}.
\]
Coupling this fact with identity \eqref{id_deri_cresc} and with expression \eqref{per_unic} yields
\[
f'(t)
=
2t
\int_0^{|\alpha|^{p-1}}
\frac{s^{\frac p{p-1}}}
{(s^2+4t^2)^{3/2}}
ds.
\]
In particular, we get that $f'>0$ and in turn the uniqueness statement.
\endproof

\begin{lemma}\label{lemma a>b}
Let us consider the boundary value problem \eqref{sist a b alpha beta} where we assume that $a>b$, $\alpha\ge0$, $\beta<0$ and $a-b>\left(\frac{p-1}{2p}\right)(|\beta|^p-\alpha^p)$. 
\begin{itemize}
    \item[$i)$] If $\alpha\ge|\beta|$, then problem \eqref{sist a b alpha beta} does not admit solutions of the form \eqref{sol mod decr}-\eqref{dom mod decr}.
    \item[$ii)$] Assuming $\alpha<|\beta|$, in the case when $a-b> \left(\frac{p-1}{p}\right)(|\beta|^p-\alpha^p)$ problem \eqref{sist a b alpha beta} does not admit solutions of the form \eqref{sol mod decr}-\eqref{dom mod decr}; while if $a-b\le\left(\frac{p-1}{p}\right)(|\beta|^p-\alpha^p)$ a solution of the form \eqref{sol mod decr}-\eqref{dom mod decr} does exist and it is unique. More precisely, there exist $R,r_i,r_o$ real values such that $0<|R|\le r_i<r_o$ and the function 
    $$u(x)=a-\int_{r_i}^{|x|} \left(\rho-\frac{R^2}{\rho}\right) ^\frac{1}{p-1}d\rho$$
solves problem \eqref{sist a b alpha beta} in the circular annular domain $\Omega=\{x\in\R^n\,|\,r_i<|x|<r_o\}$.
\end{itemize}
\end{lemma}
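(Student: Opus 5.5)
We mimic the proof of Lemma \ref{lemma a<b}, now with decreasing profiles of the form \eqref{sol mod decr}-\eqref{dom mod decr}. Since $u'<0$ and $\nu$ points outward, the Neumann conditions read $(-u'(r_i))^{p-1}=\alpha^{p-1}$ at the inner circle (where $\partial_\nu u=-u'(r_i)$) and $(-u'(r_o))^{p-1}=|\beta|^{p-1}$ at the outer one. Using $w(r)=r-R^2/r$, we must solve the system
\begin{equation*}
r_i^2-\alpha^{p-1}r_i-R^2=0,\qquad r_o^2-|\beta|^{p-1}r_o-R^2=0,\qquad a-b=\int_{r_i}^{r_o}\Big(\rho-\frac{R^2}{\rho}\Big)^{\frac1{p-1}}d\rho .
\end{equation*}
The first two equations have unique positive roots
\begin{equation*}
r_i(t)=\tfrac12\big(\alpha^{p-1}+\sqrt{\alpha^{2p-2}+4t^2}\big),\qquad r_o(t)=\tfrac12\big(|\beta|^{p-1}+\sqrt{|\beta|^{2p-2}+4t^2}\big),
\end{equation*}
with $t=|R|$. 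Both are at least $t$, so the constraint $R^2\le r_i^2$ is automatic. The condition $r_i<r_o$ is equivalent to $\alpha<|\beta|$. This settles $i)$: if $\alpha\ge|\beta|$ then $r_o\le r_i$, so no annulus exists.

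For $ii)$, set
\begin{equation*}
g(t):=\int_{r_i(t)}^{r_o(t)}\Big(\rho-\frac{t^2}{\rho}\Big)^{\frac1{p-1}}d\rho ,\qquad t\in[0,+\infty),
\end{equation*}
and study when $g(t)=a-b$ has a solution.

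\emph{Value at $t=0$.} Here $r_i=\alpha^{p-1}$ and $r_o=|\beta|^{p-1}$, so
\begin{equation*}
g(0)=\int_{\alpha^{p-1}}^{|\beta|^{p-1}}\rho^{\frac1{p-1}}d\rho=\tfrac{p-1}{p}\big(|\beta|^p-\alpha^p\big).
\end{equation*}

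\emph{Limit as $t\to+\infty$.} Substitute $\rho=t+\tau$ and expand $r_i,r_o$ as in Lemma \ref{lemma a<b}: the endpoints tend to $\alpha^{p-1}/2$ and $|\beta|^{p-1}/2$, and the integrand tends to $(2\tau)^{1/(p-1)}$. Dominated convergence then gives
\begin{equation*}
g(t)\to\tfrac{p-1}{2p}\big(|\beta|^p-\alpha^p\big).
\end{equation*}

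\emph{Monotonicity.} The key step is to show $g'<0$ on $(0,\infty)$. Differentiate $g$; the boundary terms are $\tfrac{2t}{\sqrt{|\beta|^{2p-2}+4t^2}}\,|\beta|$ and the analogous expression with $\alpha$. In the interior term, change variables to $s=\rho-t^2/\rho$, so that $d\rho/\rho=ds/\sqrt{s^2+4t^2}$. Then integrate by parts using
\begin{equation*}
\frac{d}{ds}\Big(\frac{s^{1/(p-1)}}{\sqrt{s^2+4t^2}}\Big)=\frac1{p-1}\frac{s^{\frac{2-p}{p-1}}}{\sqrt{s^2+4t^2}}-\frac{s^{\frac p{p-1}}}{(s^2+4t^2)^{3/2}} .
\end{equation*}
The boundary terms should cancel exactly, as in Lemma \ref{lemma a<b}, but with the opposite sign because the interior term enters with a minus (the integrand $\rho-t^2/\rho$ decreases in $t$). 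The expected result is
\begin{equation*}
g'(t)=-2t\int_{\alpha^{p-1}}^{|\beta|^{p-1}}\frac{s^{p/(p-1)}}{(s^2+4t^2)^{3/2}}\,ds<0 .
\end{equation*}

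\emph{Conclusion.} Since $g$ is continuous and strictly decreasing, its range is the interval $\big(\tfrac{p-1}{2p}(|\beta|^p-\alpha^p),\,\tfrac{p-1}{p}(|\beta|^p-\alpha^p)\big]$. The standing hypothesis $a-b>\tfrac{p-1}{2p}(|\beta|^p-\alpha^p)$ gives the lower bound. Hence $g(t)=a-b$ has no solution if $a-b>\tfrac{p-1}{p}(|\beta|^p-\alpha^p)$, and exactly one solution $t=R\ge0$ otherwise. In the equality case $R=0$, which is formally excluded by $0<|R|$; we would either allow $R=0$ or note that this borderline case is handled separately.

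Finally, any radial decreasing solution must have exactly this form. Indeed, the model analysis of Case 2 shows $w=r+c_1/r$, and the boundary conditions force $c_1\le0$, so the parametrization covers every such solution. This gives uniqueness.

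The main obstacle is the sign computation for $g'$, in particular checking that the boundary terms cancel exactly after the integration by parts.
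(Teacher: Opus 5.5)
Your proof is correct and follows the paper's route. You solve the Neumann equations for $r_i(t)$ and $r_o(t)$, note that $r_i<r_o$ if and only if $\alpha<|\beta|$, and reduce existence and uniqueness to continuity and strict monotonicity of the Dirichlet gap $g(t)$ on $[0,\infty)$.

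The only difference is in the monotonicity step. The paper writes $f'(t)=h(\alpha,t)-h(|\beta|,t)$ and differentiates an auxiliary $h$ in the Neumann variable. You instead substitute $s=\rho-t^2/\rho$ and integrate by parts, as in the uniqueness part of Lemma \ref{lemma a<b}. The boundary terms do cancel exactly, since $s^{1/(p-1)}$ equals $|\beta|$ and $\alpha$ at the two endpoints. This gives $g'(t)=-2t\int_{\alpha^{p-1}}^{|\beta|^{p-1}} s^{p/(p-1)}(s^2+4t^2)^{-3/2}\,ds$, which is $h(\alpha,t)-h(|\beta|,t)$ written as an integral in the Neumann variable, with its sign evident. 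You also supply $\lim_{t\to\infty}g(t)=\frac{p-1}{2p}(|\beta|^p-\alpha^p)$, which the paper uses without stating. Your remark that the equality case $a-b=\frac{p-1}{p}(|\beta|^p-\alpha^p)$ forces $R=0$ is right; the paper's claim that $R\in(0,r_i)$ overlooks this, and $R=r_i$ anyway when $\alpha=0$.

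However, delete your final paragraph: the claim that the boundary conditions force $c_1\le 0$ is false. For $c_1>0$ one has $w(r)=w(c_1/r)$. Take $r_i=\sqrt{c_1}/\lambda$ and $r_o=\lambda\sqrt{c_1}$ with $\lambda>1$, and define $u$ by $(-u')^{p-1}=r+c_1/r$. This is a radial, strictly decreasing solution of $\Delta_p u=-2$ on the annulus $r_i<|x|<r_o$, with $\alpha=|\beta|=(r_i+r_o)^{1/(p-1)}$. That is exactly the regime of $i)$, and all the standing hypotheses hold. So the nonexistence and uniqueness assertions are genuinely restricted to the family $c_1=-R^2$, as the statement says. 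Uniqueness within that family already follows from the uniqueness of the positive roots $r_i(t)$, $r_o(t)$ together with the strict monotonicity of $g$, so nothing more is needed.
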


\noindent This result proves that the further assumption $a-b\le\left(\frac{p-1}{p}\right)(|\beta|^p-\alpha^p)$ is not restrictive for our problem.

\proof
Arguing as in the proof of Lemma \ref{lemma a<b}, after imposing the boundary conditions, we are looking for solutions of the form
$$u_R(x)=a-\int_{r_i}^{|x|}\left(\rho-\frac{R^2}{\rho}\right) ^\frac{1}{p-1}d\rho\qquad\text{in }\,\,\Omega_R=\{r_i<|x|<r_o\}$$
where $R>0$
$$r_i(R)=\frac{\alpha^{p-1}+\sqrt{\alpha^{2p-2}+4R^2}}{2}$$
$$r_o(R)=\frac{|\beta|^{p-1}+\sqrt{\beta^{2p-2}+4R^2}}{2}$$
and moreover
$$b=a-\int\limits_{r_i}^{r_o}\left(\rho-\frac{R^2}{\rho}\right) ^\frac{1}{p-1}d\rho.$$
Notice that $|R|\le r_i$, so there are no problems in the previous definitions. Moreover, $r_i<r_o$ if and only if $\alpha<|\beta|$, hence statement $i)$ follows at once.

From now on, let us assume that $\alpha<|\beta|$. We introduce the function $f:[0,+\infty)\to\R$ as follows
$$f(t):=a-b-\int\limits_{r_i(t)}^{r_o(t)}\left(\rho-\frac{t^2}{\rho}\right) ^\frac{1}{p-1}d\rho.$$
In this case, we can compute its value at $t=0$  
$$f(0)=a-b-\int_{\alpha^{p-1}}^{|\beta|^{p-1}}\rho^\frac{1}{p-1}d\rho=a-b-\frac{p-1}{p}(|\beta|^p-\alpha^p).$$
For what concerns the derivative of $f$ we have
$$f'(t)=-\left(r_o-\frac{t^2}{r_o}\right)^\frac{1}{p-1}r_o'+\left(r_i-\frac{t^2}{r_i}\right)^\frac{1}{p-1}r_i'+\int\limits_{r_i}^{r_o} \frac{2t}{(p-1)\rho}\left(\rho-\frac{t^2}{\rho}\right)^\frac{2-p}{p-1}d\rho.$$
Denoting by $G$ the primitive of $g(\rho,t):=\frac{2t}{(p-1)\rho}\left(\rho-\frac{t^2}{\rho}\right)^\frac{2-p}{p-1}$ (so that $\partial_\rho G=g$) and by $l_s(t):=\frac{s^{p-1}+\sqrt{s^{2p-2}+4t^2}}{2}$ we get
$$f'(t)=-\left(l_{|\beta|}-\frac{t^2}{l_{|\beta|}}\right)^\frac{1}{p-1}l_{|\beta|}'+G(l_{|\beta|},t)+\left(l_\alpha-\frac{t^2}{l_\alpha}\right)^\frac{1}{p-1}l_{\alpha}'-G(l_\alpha,t),$$
hence
$$f'(t)=h(\alpha,t)-h(|\beta|,t)$$
where $$h(s,t):=\left(l_s-\frac{t^2}{l_s}\right)^\frac{1}{p-1}l_s'-G(l_s,t).$$
So, in order to study the monotonicity of $f$ we study the monotonicity of $h$ in the first variable. We obtain
$$\frac{d}{ds} h(s,t)=-\left(l_s-\frac{t^2}{l_s}\right)^\frac{1}{p-1} \frac{(p-1)\,t\,s^{3p-4}}{(l_s^2-t^2)\,l_s\,(s^{2p-2}+4t^2)^{3/2}} \left[s^{2p-2}+\frac{2p}{p-1}t^2+s^{p-1}\sqrt{s^{2p-2}+4t^2}\right]$$
which is clearly negative when assuming $s,\,t>0$. Since by assumption $\alpha<|\beta|$, this proves that $f'(t)>0$.
If $a-b> \left(\frac{p-1}{p}\right)(|\beta|^p-\alpha^p)$, we get $f(0)>0$, and since $f$ is monotone increasing, then $\not\exists R>0$ such that $f(R)=0$, so our problem does not admit solutions of the form \eqref{sol mod decr}-\eqref{dom mod decr}.

On the other hand, in the case when $a-b\le\left(\frac{p-1}{p}\right)(|\beta|^p-\alpha^p)$, we have
$$f(0)\le0$$
and
$$\lim_{t\to+\infty}f(t)>0,$$
since the function $f$ is continuous and strictly increasing, then there exists a unique value $R\in(0,r_i)$ such that $f(R)=0$. 
\endproof

Notice that Lemma \ref{lemma a<b} and Lemma \ref{lemma a>b} cover in particular the case of the boundary value problems \eqref{sist interno} and \eqref{sist esterno}. For the sake of simplicity, we recall the results in the following two corollaries.

\begin{corollary}\label{lemma a<b caso beta=0}
If $\alpha$ and $M$ are real constants such that $\alpha<0$ and $0<M<\left(\frac{p-1}{2p}\right)|\alpha|^p$.
Then, there exist $R$ and $r_i$ real values, such that $0<r_i<R$ and the function 
$$u(x)=\int_{r_i}^{|x|} \left(\frac{R^2}{\rho}-\rho\right) ^\frac{1}{p-1}d\rho$$
solves problem \eqref{sist interno} in the circular annular domain $$\Omega=\{x\in\R^2\,|\,r_i<|x|<R\}.$$    
\end{corollary}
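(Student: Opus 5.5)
The plan is to obtain the corollary as the special case of Lemma \ref{lemma a<b} with
\[
a=0,\qquad b=M,\qquad \beta=0 .
\]
Problem \eqref{sist interno} is exactly problem \eqref{sist a b alpha beta} with these values. Here $\Gamma_{\rm i}=\Sigma_{\rm i}$ carries $u=0$ and $\partial_\nu u=\alpha$, while $\Gamma_{\rm o}=\Sigma$ carries $u=M$ and $\partial_\nu u=0$.

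First I check the hypotheses of Lemma \ref{lemma a<b}. We have $a=0<M=b$, $\alpha<0$ and $\beta=0\ge0$. The quantitative condition $\left(\frac{p-1}{2p}\right)(|\alpha|^p-\beta^p)>b-a$ becomes $\left(\frac{p-1}{2p}\right)|\alpha|^p>M$, which is precisely the assumption. The lemma then gives unique $R,r_i,r_o$ with $0<r_i<r_o\le R$ such that
\[
u(x)=\int_{r_i}^{|x|}\left(\frac{R^2}{\rho}-\rho\right)^{\frac1{p-1}}d\rho
\]
solves the problem on $\{r_i<|x|<r_o\}$.

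Next I identify $r_o$. Formula \eqref{r_out} with $\beta=0$ gives $r_o=\frac12\sqrt{4R^2}=R$. Equivalently, the fourth equation of \eqref{sistema:slu_rot} reads $r_o^2-R^2=0$. Hence the domain is $\{r_i<|x|<R\}$, and $0<r_i<R$ follows from $r_i<r_o$, which the lemma guarantees because $\beta=0<|\alpha|$.

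It remains to check the extra condition $0\le u\le M$ in \eqref{sist interno}. The integrand is positive on $(r_i,R)$, so $u$ is strictly increasing from $u(r_i)=0$ to $u(R)=M$. On $|x|=R$ the derivative is $u'(R)=(R^2/R-R)^{1/(p-1)}=0$, so the Neumann condition $\partial_\nu u=0$ on $\Sigma$ holds.

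The only subtle point is that the proof of Lemma \ref{lemma a<b} bounds $f$ near $t=0$ using $\beta^{p-2}$, which degenerates when $\beta=0$. I would redo that bound directly: $0\le\int_{r_i(t)}^{t}(t^2/\rho-\rho)^{1/(p-1)}d\rho\le t\cdot t^{1/(p-1)}\to0$, since $r_i(t)\le t$ and the integrand is at most $(t^2/\rho-\rho)^{1/(p-1)}\le t^{1/(p-1)}$ on $\rho\ge r_i(t)\sim t^2/|\alpha|^{p-1}$. I would double-check this last inequality (it uses $t^2/\rho-\rho\le t$ there, which holds by the defining equation of $r_i$ for small $t$). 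With $f(0^+)=-M<0$ and $f(+\infty)=\frac{p-1}{2p}|\alpha|^p-M>0$, continuity gives a zero of $f$, which completes the argument.
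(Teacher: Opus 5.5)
Your route is the paper's own: the corollary is Lemma~\ref{lemma a<b} with $a=0$, $b=M$, $\beta=0$, where \eqref{r_out} forces $r_o=R$. Your checks of the hypotheses, of the boundary conditions and of $0\le u\le M$ are correct.

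One correction concerns your optional re-derivation of $\lim_{t\to0^+}f(t)$. The inequality $t^2/\rho-\rho\le t$ on $[r_i(t),t]$ is false. The defining equation of $r_i$ gives $t^2/r_i(t)-r_i(t)=|\alpha|^{p-1}$, a fixed constant, so the bound $t^{1/(p-1)}$ on the integrand fails near the left endpoint. The correct bound is just as quick: $\rho\mapsto t^2/\rho-\rho$ is decreasing, so the integrand is at most $|\alpha|$ on $[r_i(t),t]$. Hence the integral is at most $|\alpha|\,t\to0$.

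The re-check is in any case unnecessary. The lemma is stated for $\beta\ge0$, and in its bound $\beta$ appears only as $1/\beta^{p-2}=\beta^{2-p}$, which vanishes at $\beta=0$. With $r_o(t)=t$, the corresponding term is $O(t^{p/(p-1)})=o(t^2)$.
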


\begin{corollary}\label{lemma a>b alfa=0}
Let $\beta$ and $M$ be real constants such that $M>\frac{p-1}{2p}\,|\beta|^p>0$.
\begin{itemize}
    \item[$i.$] If $M\ge\frac{p-1}{p}\,|\beta|^p$ problem \eqref{sist esterno} does not admit solutions of the form \eqref{sol mod decr}-\eqref{dom mod decr}.
    \item[$ii.$] If $\frac{p-1}{2p}\,|\beta|^p<M<\frac{p-1}{p}\,|\beta|^p$, then there exist $R$ and $r_o$ real values such that $0<R<r_o$ and the function 
    $$u(x)=M-\int_{R}^{|x|} \left(\rho-\frac{R^2}{\rho}\right) ^\frac{1}{p-1}d\rho$$
    solves problem \eqref{sist esterno} in the circular annular domain 
    $$\Omega=\{x\in\R^n\,|\,R<|x|<r_o\}.$$
\end{itemize}
\end{corollary}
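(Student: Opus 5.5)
The plan is to specialise Lemma \ref{lemma a>b} to problem \eqref{sist esterno}, exactly as Corollary \ref{lemma a<b caso beta=0} specialises Lemma \ref{lemma a<b}. The dictionary is: $a=M$ on the inner boundary component $\Sigma$, $\alpha=0$ there, and $b=0$ with Neumann datum $\beta<0$ on $\Sigma_{\rm o}$. The standing hypothesis $a-b>\frac{p-1}{2p}(|\beta|^p-\alpha^p)$ of Lemma \ref{lemma a>b} then reads $M>\frac{p-1}{2p}|\beta|^p$, which is what we assume. Also $\alpha=0<|\beta|$, so we are always in case $ii)$ of that lemma.

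For item $i.$, the threshold in case $ii)$ becomes $\frac{p-1}{p}(|\beta|^p-0)=\frac{p-1}{p}|\beta|^p$. Lemma \ref{lemma a>b} excludes radial solutions only when $M>\frac{p-1}{p}|\beta|^p$, so the equality case $M=\frac{p-1}{p}|\beta|^p$ needs separate treatment. Here I would go back to the function $f$ built in the proof of the lemma. For $\alpha=0$ one has $r_i(t)=t$, and
\[
f(0)=M-\frac{p-1}{p}|\beta|^p=0 .
\]
Since $f'>0$ for $t>0$, we get $f(t)>0$ for every $t>0$. So the only possible zero is $t=0$, i.e. $R=0$. This gives $r_i=0$, which is not an annulus: it is the disk carrying the $p$-Serrin profile, not an admissible solution of the form \eqref{sol mod decr}-\eqref{dom mod decr}.

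The one delicate point is that $f'>0$ was derived for $\alpha>0$. I would redo the derivative with $\alpha=0$ and $l_0(t)=t$. The boundary term at $\rho=r_i$ vanishes, because the integrand $(\rho-t^2/\rho)^{1/(p-1)}$ is zero at $\rho=t$. Hence $f'(t)=h(0,t)-h(|\beta|,t)$, where $h(0,t)$ now means $-G(t,t)$. The monotonicity of $h$ in $s$ still gives $f'>0$, by continuity in $s$ down to $s=0$ and the strict negativity of $\partial_s h$ for $s>0$. Equivalently, one can differentiate directly: $f'(t)=-(\cdots)r_o'+\int_t^{r_o}\frac{2t}{(p-1)\rho}(\cdots)^{\frac{2-p}{p-1}}d\rho$, and compare the two terms as in the lemma.

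For item $ii.$, suppose $\frac{p-1}{2p}|\beta|^p<M<\frac{p-1}{p}|\beta|^p$. Then $f(0)<0$ and $\lim_{t\to\infty}f(t)>0$, the latter shown as in Lemma \ref{lemma a>b}. By continuity and strict monotonicity there is a unique $R>0$ with $f(R)=0$. Then $r_i=l_0(R)=R$ and
\[
r_o=\frac{|\beta|^{p-1}+\sqrt{\beta^{2p-2}+4R^2}}{2}>R .
\]
The resulting function is $u=M-\int_R^{|x|}(\rho-R^2/\rho)^{1/(p-1)}d\rho$, and it satisfies all four conditions of \eqref{sist esterno}:
\begin{itemize}
\item $u=M$ and $u'=0$ on $|x|=R$;
\item $u=0$ on $|x|=r_o$, by the choice $f(R)=0$;
\item $\partial_\nu u=-(r_o-R^2/r_o)^{1/(p-1)}=\beta$ on $|x|=r_o$, by the defining quadratic for $r_o$;
\item $0\le u\le M$, since $u$ is decreasing.
\end{itemize}

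The main obstacle is the degenerate endpoint analysis in item $i.$: checking that the derivative computation of Lemma \ref{lemma a>b} survives $\alpha=0$, so that the equality case only produces $R=0$.
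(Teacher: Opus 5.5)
Your proposal is correct and follows the paper's route: the paper gives no separate argument and obtains the corollary by specialising Lemma \ref{lemma a>b} to $a=M$, $b=0$, $\alpha=0$. Your separate treatment of the borderline case $M=\frac{p-1}{p}|\beta|^p$ is a worthwhile addition: there $f(0)=0$ and $f'>0$, and since $f'>0$ survives $\alpha=0$ by continuity of $h(\cdot,t)$ at $s=0$, the only zero is at $R=0$, which gives the $p$-Serrin disk. This matters because Lemma \ref{lemma a>b}, read literally, would place that case in the existence regime, whereas the corollary correctly puts it in item $i.$
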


%--------------------------------------------------------------------

\section{Gradient estimates}\label{sez stime grad}

Given $(\Omega,u)$ a solution to problem \eqref{sist a b alpha beta}, from Proposition \ref{prop cond pohozaev} and Lemma \ref{lemma a<b} or Lemma \ref{lemma a>b} (depending on whether $a<b$ or $a>b$), we can associate to $(\Om,u)$ three positive real parameters $R,\,r_i,\,r_o$, functions of the Dirichlet data $a,\,b$ and of the Neumann data $\alpha,\,\beta$, that we will call \textit{comparison parameters}. Consequently, we can associate to $(\Om,u)$ the model solution $(\Om_R,u_R)$. 

In particular, if $(\Omega,u)$ is a solution to problem \eqref{nostro problema}, considering its restriction to $\Om_{\rm i}$, we get a solution to system \eqref{sist interno} for $M=u_{\rm max}$. Then, from Corollary \ref{cor cond poho} together with Corollary \ref{lemma a<b caso beta=0}, we can associate to $(\Om_{\rm i},u_{|\Om_{\rm i}})$ two positive real parameters $r_i$ and $R$ (functions of $u_{\rm max}$ and $\alpha$). Consequently, we can associate to $(\Om,u)$ restricted to $\Om_{\rm i}$, the model solution $(\Om_R,u_R)$. Analogously, using Corollary \ref{lemma a>b alfa=0}, we associate to $(\Om_{\rm o},u_{|\Om_{\rm o}})$ the two values $R$ and $r_o$ (functions of $u_{\rm max}$ and $\beta$). 

In this section, we compare the gradient of $(\Om,u)$ solution to problem \eqref{sist a b alpha beta} with the gradient of the associated ring-shaped model solution $(\Om_R,u_R)$.

\subsection{Case 1: internal subdomain.}

If $a<b$ and $(\Om,u)$ is a solution to problem \eqref{sist a b alpha beta}, then the comparison parameters are such that $(0<)\,r_i<r_o\le R$ and $u_R:[r_i,r_o]\longrightarrow\R$ is defined as
\[u_R(r)\,:=\,a+\,\int_{r_i}^r\left(\frac{R^2}{\rho}-\rho\right)^\frac{1}{p-1}d\rho.\]
Note that $u_R$ is strictly increasing and that $u_R\big([r_i,r_o]\big)=[a,b]$.
We define the function $\psi_i:[a,b]\to[r_i,r_o]$
as the inverse of $u_R$. So we get
\begin{equation}\label{psi' e psi''}
\dot\psi_i=\left(\frac{\psi_i}{R^2-\psi_i^2}\right)^\frac{1}{p-1}\qquad \text{and}\qquad \ddot\psi_i=\frac{2}{p-1}\dot\psi_i^{p+1} +\frac{1}{p-1}\frac{\dot\psi_i^2}{\psi_i}    
\end{equation}
The \textit{pseudo-radial function} $\Psi:\Om\to[r_i,r_o]$ is defined as
$$\Psi(x):=\psi_i\left(u(x)\right).$$
We introduce the following \textit{ansatz metric}
$$g=\frac{1}{\Psi^2}\,g_{\R^2}$$
which clearly is conformal to the Euclidean metric $g_{\R^2}=dr\otimes dr+r^2 d\theta\otimes d\theta$, and it coincides with the cylindrical metric when $\Psi(x)=r(x)$. We also introduce the \textit{pseudo-affine function} \begin{equation}
\label
{filogtri}
\varphi(x):=\log\left(\psi_i(u(x))\right),
\end{equation}
note that $|\nabla\varphi|_g=1$ in the radially symmetric case.
Denoting by $\nabla$ and $\Delta_g$ the Levi-Civita connection and the Laplace-Beltrami operator of $g$, respectively, and by $\Delta^{(g)}_p$ the $p$-Laplacian with respect to $g$, we can rephrase our problem in terms of the new metric $g$.

\begin{prop}\label{pb phi interno}
Let $a<b$ and $(\Om,u)$ be a solution to problem \eqref{sist a b alpha beta}. Then, the metric $g$ and the pseudo-affine function $\f$ satisfy the following system over $\Omega$
\begin{equation}\label{sist g e fi}
\begin{cases}
\Delta_p^{(g)}\f=2\psi_i\dot\psi_i^{p-1}\Big(|\N\f|^p_g-1\Big)\\
    \mathrm{Ric}_g=(\Delta_g \f)\,g,    
\end{cases}
\end{equation}
where the functions $\psi_i$ and $\dot\psi_i$ are meant to be evaluated at $u(x)$.
\end{prop}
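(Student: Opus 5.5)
The plan is a direct computation in two parts, one for each equation of \eqref{sist g e fi}. We use the standard conformal change formulas in dimension two and the ODE \eqref{psi' e psi''} satisfied by $\psi_i$. Write $g=e^{2w}g_{\R^2}$ with $w=-\log\Psi=-\f$. Everything below happens on $\Omega$, where ${\rm Crit}(u)\cap\Omega=\varnothing$ and $u$ is real analytic, so all derivatives are classical.

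\textbf{Ricci equation.} In two dimensions $\mathrm{Ric}_g=K_g\,g$. For $g=e^{2w}g_{\R^2}$ the Gauss curvature is $K_g=-e^{-2w}\Delta w$, where $\Delta$ is the Euclidean Laplacian. With $w=-\f$ this gives $K_g=\Psi^2\Delta\f$. Next, the conformal Laplacian in dimension $n=2$ is $\Delta_g h=e^{-2w}\big(\Delta h+(n-2)\langle\nabla w,\nabla h\rangle\big)=\Psi^2\Delta h$. Hence $\Delta_g\f=\Psi^2\Delta\f=K_g$, which is exactly $\mathrm{Ric}_g=(\Delta_g\f)\,g$. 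This step is purely geometric and uses neither the PDE nor the choice of $\psi_i$.

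\textbf{$p$-Laplace equation.} Use the formula $\Delta_p^{(g)}h=\mathrm{div}_g(|\nabla h|_g^{p-2}\nabla^g h)$. In dimension two the volume form is $\Psi^{-2}dx$ and $\nabla^g h=\Psi^2\nabla h$. Therefore
$$\Delta^{(g)}_p h=\Psi^{2}\,\mathrm{div}\big(\Psi^{2-p}|\nabla h|^{p-2}\Psi^{2}\,\Psi^{-2}\nabla h\big)=\Psi^2\,\mathrm{div}\big(\Psi^{2-p}|\nabla h|^{p-2}\nabla h\big).$$
Here we used $|\nabla h|_g=\Psi|\nabla h|$. Substitute $\f=\log\psi_i(u)$, so that $\nabla\f=(\dot\psi_i/\psi_i)\nabla u$. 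Then
$$\Psi^{2-p}|\nabla\f|^{p-2}\nabla\f=\psi_i\,\dot\psi_i^{\,p-1}\psi_i^{-p}\,\psi_i^{2-p}\cdots$$
More precisely, this vector field equals $F(u)\,|\nabla u|^{p-2}\nabla u$ with $F:=\psi_i^{2-p}(\dot\psi_i/\psi_i)^{p-1}=\psi_i^{3-2p}\dot\psi_i^{p-1}$. Expanding the divergence and using $\Delta_p u=-2$ gives
$$\Delta_p^{(g)}\f=\psi_i^2\big(F'(u)|\nabla u|^p-2F(u)\big).$$
From \eqref{psi' e psi''} we have $\dot\psi_i^{p-1}=\psi_i/(R^2-\psi_i^2)$, so $F=\psi_i^{4-2p}/(R^2-\psi_i^2)$. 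Differentiating in $u$ with the chain rule through $\dot\psi_i$ gives $F'$. Finally, rewrite $|\nabla u|^p=|\nabla\f|_g^p\,\psi_i^{0}\dot\psi_i^{-p}\cdots$ using $|\nabla\f|_g=\Psi|\nabla\f|=\dot\psi_i|\nabla u|$, that is, $|\nabla u|^p=\dot\psi_i^{-p}|\nabla\f|_g^p$.

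\textbf{Main obstacle.} The delicate step is the algebraic verification that
$$\psi_i^2F'\dot\psi_i^{-p}=2\psi_i\dot\psi_i^{p-1}\qquad\text{and}\qquad -2\psi_i^2F=-2\psi_i\dot\psi_i^{p-1}.$$
Together these yield the claimed right-hand side $2\psi_i\dot\psi_i^{p-1}(|\nabla\f|_g^p-1)$. If the exponent bookkeeping in $F$ turns out off, I would first double-check the conformal weights in $\Delta_p^{(g)}$ in dimension two. A good sanity check is the radial model: there $\Psi=|x|$, $\f=\log r$, and $|\nabla\f|_g=1$, so $\Delta^{(g)}_p\f$ must vanish. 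Indeed $\log r$ is affine for the cylindrical metric $dr^2/r^2+d\theta^2$, which fixes the constant term. Matching the coefficient of $|\nabla\f|_g^p$ then reduces to the identity $\frac{d}{du}\big(\psi_i^{4-2p}/(R^2-\psi_i^2)\big)=2\psi_i^{-1}\dot\psi_i^{2p-1}$. This follows from $\dot\psi_i^{p-1}(R^2-\psi_i^2)=\psi_i$ by differentiating once.
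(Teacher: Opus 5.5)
\textbf{The $p$-Laplace part has a wrong conformal weight, so the verification you describe fails as written.} Since $|\nabla h|_g=\Psi|\nabla h|$, one has $|\nabla h|_g^{p-2}=\Psi^{p-2}|\nabla h|^{p-2}$, not $\Psi^{2-p}|\nabla h|^{p-2}$. Combining this with $\nabla^g h=\Psi^2\nabla h$ and $dV_g=\Psi^{-2}dx$, the correct formula is
\[
\Delta_p^{(g)}h=\Psi^2\,\mathrm{div}\big(\Psi^{p-2}|\nabla h|^{p-2}\nabla h\big).
\]
With your weight you obtain $F=\psi_i^{3-2p}\dot\psi_i^{p-1}$. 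Both identities in your ``main obstacle'' paragraph are then false for $p\neq2$:
\begin{itemize}
\item $\psi_i^2F=\psi_i^{5-2p}\dot\psi_i^{p-1}\neq\psi_i\dot\psi_i^{p-1}$;
\item $\frac{d}{du}\big(\psi_i^{4-2p}/(R^2-\psi_i^2)\big)=2\psi_i^{-1}\dot\psi_i^{2p-1}$ fails in general; for $p=3/2$ it would force $R^2+\psi_i^2=2\psi_i$.
\end{itemize}
Your own radial sanity check would catch this. Your formula applied to $h=\log r$ with $\Psi=r$ gives $(4-2p)\,r^{4-2p}\neq0$, whereas the correct one gives $r^2\,\mathrm{div}(x/r^2)=0$. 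The lines ending in ``$\cdots$'' are also left unfinished.

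\textbf{Repair.} With the correct weight,
\[
F=\Psi^{p-2}(\dot\psi_i/\psi_i)^{p-1}=\psi_i^{-1}\dot\psi_i^{p-1}=(R^2-\psi_i^2)^{-1},
\]
so $-2\psi_i^2F=-2\psi_i\dot\psi_i^{p-1}$ at once. Using $(R^2-\psi_i^2)^{-1}=\psi_i^{-1}\dot\psi_i^{p-1}$,
\[
F'=2\psi_i\dot\psi_i/(R^2-\psi_i^2)^2=2\psi_i^{-1}\dot\psi_i^{2p-1}.
\]
Together with $|\nabla u|^p=\dot\psi_i^{-p}|\nabla\f|_g^p$, this gives exactly $\Delta_p^{(g)}\f=2\psi_i\dot\psi_i^{p-1}\big(|\nabla\f|_g^p-1\big)$.

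\textbf{Comparison with the paper.} Your Ricci argument is correct; it is the two-dimensional case of the conformal Ricci formula the paper invokes. Once the weight is fixed, your $p$-Laplace route genuinely differs from the paper's. The paper splits $\Delta_p u=\langle D|Du|^{p-2},Du\rangle+|Du|^{p-2}\Delta u$ and converts each term through the conformal change of the Hessian and of $\Delta_g\f$. You keep the divergence form throughout. This reduces the whole computation to the single scalar function $F(u)=(R^2-\psi_i^2)^{-1}$ and makes the role of the ODE for $\psi_i$ transparent.
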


\proof
Since $u$ solves problem \eqref{sist a b alpha beta}, we have
\begin{equation}\label{id p lapl}
-2=\Delta_p u=\mathrm{div}(|\D u|^{p-2}\,\D u)=\langle\D(|\D u|^{p-2}),\D u\rangle\,+\, |\D u|^{p-2}\Delta u. 
\end{equation}
We rewrite both terms on the right-hand side in terms of $\f$ and $g$. To simplify the notation, we write 
$\psi$ in place of $\psi_i$. For the first term, it holds
$$\langle\D(|\D u|^{p-2}),\D u\rangle=\frac{1}{\psi\dot\psi^{p-1}} \langle\N(|\N\f|^{p-2}),\N\f\rangle_g+(2-p)\frac{\ddot\psi}{\dot\psi^{p+1}}|\N\f|_g^p.$$
On the other hand, to deal with the second term we recall that for every $z\in C^\infty$ we have
$$\nabla_i\nabla_j z=\D_i\D_j z+\frac{1}{\psi}\bigg(\partial_i z\,\partial_j\psi+\partial_i\psi\,\partial_j z- \langle\D z,\D \psi\rangle g^{\R^2}_{ij} \bigg) $$
from which we get
$$\Delta_g\f=\psi\dot\psi\Delta u+\psi^2\langle\, \D\Big(\frac{\dot\psi}{\psi} \Big),\D u\rangle$$
isolating $\Delta u$ and rewriting in terms of the new metric, we finally get
$$\Delta u=\frac{1}{\psi\dot\psi}\Delta_g\f-\frac{1}{\dot\psi^2} \langle\, \D\Big(\frac{\dot\psi}{\psi} \Big),\N\f\rangle_g.$$
Plugging in \eqref{id p lapl}, we obtain
$$\Delta_p^{(g)}\f=2\psi\dot\psi^{p-1}\Big(|\N\f|^p_g-1\Big).$$
Exploiting the formula for the conformal change of the Ricci tensor, and  
the fact that $\mathrm{Ric}_{g_{\R^n}}\equiv 0$ we obtain
$$\mathrm{Ric}_{g}=(\Delta_g\f)\,g.$$
\endproof

Let us set 
\begin{equation}
\label{defWW_R}
W:=|\nabla u|^p\qquad\text{and}\qquad W_R:=|\nabla u_R|^p\circ\Psi=\left(\frac{R^2-\Psi^2}{\Psi}\right)^\frac{p}{p-1},
\end{equation}
and note that 
\begin{equation}\label{w su wr nabla phi}
    |\N\varphi|_g^p=\frac{W}{W_R}.
\end{equation}
Taking advantage of the previous equivalent reformulation of our problem, we obtain the following gradient estimates, we also provide our first rigidity result.
More precisely, since the proof is substantially different and relies on distinct assumptions, we distinguish between the case $\beta \neq 0$ and the case $\beta = 0$.

\begin{theorem}\label{stime gradiente}
Let us fix $a<b$ and $\beta\not=0$. Let $(\Om,u)$ be a solution to problem \eqref{sist a b alpha beta} and $R$ be the comparison parameter given by Lemma \ref{lemma a<b}. Then, 
\begin{equation}\label{W<=WR}
    W\le W_R,\quad\text{in }\Omega.
\end{equation}
Moreover, if there exists $x\in\Omega$ such that $W(x)=W_R(x)$, then $(\Om,u)$ coincides with $(\Om_R,u_R)$, up to translations.  
\end{theorem}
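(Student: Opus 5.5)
We would prove that $|\nabla\f|_g\le 1$ on $\Omega$, which is equivalent to \eqref{W<=WR} by \eqref{w su wr nabla phi}, using a maximum principle for a quantity built from $w:=|\nabla\f|_g^p$ (or $|\nabla\f|_g^2$). The strategy mirrors the linear comparison scheme. The key difference is that we replace the classical Bochner formula by the $p$-Bochner formula of \cite{Valtorta}, applied in the conformal metric $g$ to the equation $\Delta_p^{(g)}\f=2\psi_i\dot\psi_i^{p-1}(|\nabla\f|_g^p-1)$ from Proposition \ref{pb phi interno}.

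\textbf{Step 1: boundary values of $w$.} On $\Gamma_{\rm i}$ we have $|\nabla u|=|\alpha|=|\dot u_R(r_i)|$ and $\Psi=r_i$, so $w=1$. Likewise, on $\Gamma_{\rm o}$ we have $|\nabla u|=\beta=\dot u_R(r_o)$ and $\Psi=r_o$, so again $w=1$. The assumption $\beta\neq0$ is used precisely here. It guarantees that $W_R>0$ up to $\Gamma_{\rm o}$, i.e.\ $r_o<R$, so that $w$ is a well-defined continuous function on $\overline\Omega$ with boundary value $1$. Since ${\rm Crit}(u)\cap\Omega=\varnothing$, $u$ is analytic inside, and $\f$ is smooth with $\nabla\f\neq0$. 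The operator appearing in the $p$-Bochner formula is therefore uniformly elliptic on compact subsets, namely the linearized operator $L(\cdot)={\rm div}_g\big(|\nabla\f|_g^{p-2}(\mathrm{Id}+(p-2)\nu\otimes\nu)\nabla\cdot\big)$ with $\nu=\nabla\f/|\nabla\f|_g$.

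\textbf{Step 2: differential inequality.} We write the $p$-Bochner identity for $L(|\nabla\f|^p_g)$. This expresses it as $p|\nabla\f|_g^{p-2}\langle\nabla\Delta_p^{(g)}\f,\nabla\f\rangle_g$ plus a Hessian term, which is nonnegative by the refined Kato/Cauchy--Schwarz inequality, plus a Ricci term $p|\nabla\f|_g^{2p-2}{\rm Ric}_g(\nu,\nu)$. We then substitute ${\rm Ric}_g=(\Delta_g\f)g$, and express $\Delta_g\f$ through the equation; in dimension two, $\Delta_g\f$ is determined by $\Delta^{(g)}_p\f$ and $\nabla^2\f(\nu,\nu)$. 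We also substitute the gradient of the right-hand side $2\psi_i\dot\psi_i^{p-1}(w-1)$, using \eqref{psi' e psi''}. The goal is an inequality of the form
\[
L(w)+\langle X,\nabla w\rangle_g \;\ge\; c(x)\,(w-1)\qquad\text{in }\Omega,
\]
with $X$ a locally bounded vector field and $c\ge0$. Possibly we will need to multiply $w-1$ by a positive function of $\Psi$ to get the correct sign of $c$. Once this holds, the strong maximum principle for $L$ applies. It shows that $w$ cannot have an interior maximum greater than $1$, so $w\le1$, which is \eqref{W<=WR}. Moreover, if $w(x)=1$ at an interior point, then $w\equiv1$ by the strong maximum principle.

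\textbf{Step 3: rigidity.} If $w\equiv1$, equality in the Kato-type step forces $\nabla^2\f$ to have a specific rank-one structure, and then ${\rm Ric}_g$ together with the equation force $\nabla^2_g\f\equiv0$. Hence $(\Omega,g)$ is a flat cylinder with $\f$ the affine coordinate. Translating back to the Euclidean picture, the level sets of $u$ are concentric circles and $\Psi=|x-x_0|$, so $u=u_R(|x-x_0|)$.

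\textbf{Main obstacle.} We expect the hardest part to be Step 2. We must check that all first-order terms involving $\ddot\psi_i$ and the factor $\psi_i\dot\psi_i^{p-1}$ either combine into a drift term $\langle X,\nabla w\rangle$ or carry a favorable sign proportional to $w-1$. This is where the nonlinearity ($p\neq2$) genuinely enters. We would first attempt the computation with $w$ itself. If the zeroth-order coefficient has the wrong sign, we would instead test the modified quantity $\Psi^{k}(w-1)$ or $W-W_R$ directly, choosing $k$ to absorb the bad term.
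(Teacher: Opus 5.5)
Your proposal follows essentially the paper's proof. The paper applies the $p$-Bochner formula of \cite{Valtorta} to $\varphi$ in the metric $g$, uses ${\rm Ric}_g=(\Delta_g\varphi)g$ and the equation of Proposition~\ref{pb phi interno}, and obtains $P^{II}_\varphi(w)+a\langle\nabla w,\nabla\varphi\rangle_g+cw\ge0$ for $w=|\nabla\varphi|_g^p-1$; this $w$ vanishes on both boundary components because $\beta\neq0$ forces $r_o<R$, and the weak and strong maximum principles together with the vanishing of $\nabla^2\varphi$ give the estimate and the rigidity.

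The hedge in your Step~2 can be dropped: no weight is needed in this case. The zeroth-order coefficient is already $c=-2p\bigl[2\psi_i\dot\psi_i^{p-1}+(p-1)\psi_i^2\dot\psi_i^{p-3}\ddot\psi_i\bigr]|\nabla\varphi|_g^p\le0$, since $\psi_i,\dot\psi_i,\ddot\psi_i>0$. The auxiliary factor $\eta(\varphi)$ only becomes necessary in the cases $\beta=0$ and $a>b$.
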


\proof 
The main tool in this proof is the $p$-Bochner formula established in \cite{Valtorta}. Following the notation therein, we consider the differential operator
\begin{equation}
\label{p_II:def}
P^{II}_\f(f)\,:=\,
|\N\f|^{p-2}\Delta f+ (p-2)|\N\f|^{p-4}\N^2 f\,(\N\f,\N\f),
\end{equation}
which is the second-order operator appearing in the linearisation of the 
$p$-Laplacian at $\f$. Since $|\N\f|_g>0$ in $\Omega$, the $p$-Bochner formula \cite[Proposition 3.1.2]{Valtorta}
says that
\begin{equation}\label{p bochner fi}
\begin{aligned}
P^{II}_\f\Big(|\N\f|_g^p\Big)=p\,|\N \f|_g^{2p-4}&\Bigg\{ |\N\f|_g^{2-p}\bigg[\langle\N\Delta_p^{(g)} \f\,,\N \f\rangle_g -(p-2)\frac{\N^2 \f(\N\f,\N\f)}{|\N \f|_g^2}\Delta_p^{(g)} \f\bigg]\\
&+|\N^2\f|_g^2\,+\,p(p-2)\frac{\big(\N^2 \f(\N\f,\N\f)\big)^2}{|\N \f|_g^4}+\mathrm{Ric}_g(\N\f,\N\f) \Bigg\},
\end{aligned}
\end{equation}
throughout $\Omega$.
Using equations \eqref{sist g e fi} and setting 
\[
w\,:=\,|\N\f|_g^p-1,
\]
after some calculations, we obtain 
\begin{align*}
\mathrm{Ric}_g(\N\f,\N\f)
&\,=\,
2\psi\,\dot\psi^{p-1}|\N\f|_g^{4-p}\,w\,-\,\Big(\frac{p-2}{p}\Big) |\N\f|_g^{2-p}\langle\N w,\N\f\rangle_g,\\
\langle\N\big(\Delta_p^{(g)}\f\big),\N\f\rangle_g
&\,=\,
2\Big(\psi\,\dot\psi^{p-1}+(p-1)\psi^2\dot\psi^{p-3}\ddot\psi\Big)|\N\f|_g^2\,w\,+\, 2\psi\dot\psi^{p-1}\langle\N w,\N\f\rangle_g.
\end{align*}
Here and throughout the proof, the function $\psi$ stands for $\psi_i$ and is always meant, together with its derivatives, to be evaluated at $u(x)$.
Substituting into \eqref{p bochner fi} and using the identity
\begin{equation}\label{espr  hess fi}
\N^2\f(\N\f,\N\f)=\frac{1}{p}\,|\N\f|_g^{2-p}\langle\N w,\N\f\rangle_g,
\end{equation}
we get
\begin{equation}
\label{stima ell int}
P^{II}_\f(w)\,+\, a\langle\N w,\N\f\rangle_g
\,-\,2p\,\Big[2\psi\,\dot\psi^{p-1}+(p-1)\psi^2\dot\psi^{p-3}\ddot\psi\Big]|\N\f|_g^{p}\,w
=\, 
p|\nabla\f|_g^{2p-4}|\nabla^2\f|_g^2,
\end{equation}
where
\[a\,=\,-\left(2-p+2p\,\psi\,\dot\psi^{p-1}\right)\!|\N\f|_g^{p-2}
-2(2-p)\psi\,\dot\psi^{p-1}|\N\f|_g^{-2}w
-(p-2)|\N\f|_g^{-4}\langle\N w,\N\f\rangle_g.\]
Hence
$$P^{II}_\f(w)\,+\,a\,\langle\N w,\N\f\rangle_g +c\,w\,\ge0,$$
using the formula for $\ddot\psi$ in \eqref{psi' e psi''}, we get
$$c=-p\left[6\psi\dot\psi^{p-1}+4\psi^2\dot\psi^{2p-4}\right] |\N\f|_g^{p}$$
which is non-positive since $\psi>0$ and $\dot\psi\ge0$.
We can thus invoke the Weak Maximum Principle (see Remark \ref{obs max pr}), which gives
$$\sup_{\Om} w\le\sup_{\partial\Om}w^+.$$
To conclude, we have to study the behaviour of $w$ on $\partial \Om=\Gamma_{\rm i}\sqcup\Gamma_{\rm o}$.
However, the function $w$ is not a priori well defined on the whole boundary of $\Omega$, but only on $\Gamma_{\rm i}$. More precisely, recalling that
$\Psi_{|_{\Gamma_{\rm i}}}=\psi(a)=r_i$,
we have that
\begin{equation}\label{calc_sigma_in}
{|\N\f|_g}_{\big|_{\Gamma_{\rm i}}}=\dot\psi(a)|\D u|_{\big|_{\Gamma_{\rm i}}}=-\frac{\alpha\,\,r_i^\frac{1}{p-1}}{(R^2-r_i^2)^\frac{1}{p-1}}=1,
\end{equation}
so that $w=0$ on $\Gamma_{\rm i}$. On the other hand, $\Psi_{|_{\Gamma_{\rm o}}}=\psi(b)=r_o$, so
$${|\N\f|_g}_{\big|_{\Gamma_{\rm o}}}=
\dot\psi(b)|\D u|_{\big|_{\Gamma_{\rm o}}}=
\frac{\beta\,\,r_o^\frac{1}{p-1}}{(R^2-r_o^2)^\frac{1}{p-1}}$$
which is not well-defined in the case when $\beta=0$, being $R=r_o$.
Since we are assuming $\beta>0$, then $R>r_o$, hence $|\N\f|_g=1$ and $w=0$ on $\Gamma_{\rm o}$.
We can conclude that $w\le0$ in $\Omega$, namely $|\N\f|^p_g\le1$ in $\Omega$, and recalling the definition of the pseudo-affine function $\f$, this means $W\le W_R$ in $\Om$.

Now, suppose that there exists $x\in\Omega$ such that $W(x)=W_R(x)$. Hence, $w(x)=0$ and in turn $w=0$ in $\Om$, by the Strong Maximum Principle (see Remark \ref{obs max pr}). This fact implies that $|\N\f|_g\equiv 1$ in $\Omega$ and in turn that $|\nabla^2\f|_g\equiv0$ in $\Omega$, by \eqref{stima ell int}. Therefore, the second fundamental form $h$ of the level sets of $\f$ is null. Since $h$ is related with the second fundamental form $\tilde h$ of the level sets of $u$ - computed w.r.t. $g_{\R^2}$ - by the formula
\[
{h_{ij}}_{\big|_{\{\f=\log(\psi(c))\}}}={\frac{\tilde{h}_{ij}}{\psi}}_{\big |_{\{u=c\}}}\!\!-\frac{\dot\psi}{\psi^2}|{\rm D}u|\,{g_{ij}^{\R^2}}_{\big|_{\{u=c\}}},
\]
we obtain that all the level sets of $u$ are totally umbilic curves in $\R^2$. In turn,  being closed the level sets are isometric to circles, proving that $u$ is radially symmetric.
\endproof

The analogous gradient estimate in the case $\beta=0$ requires a reverse {\L}ojasiewicz inequality, according to the following definition. 

\begin{definition}[$\bar\gamma$-reverse {\L}ojasiewicz inequality]
\label{RRLI}
Let $(M,g)$ be a Riemannian manifold,
$f:M\to\R$, and let $\bar\gamma>0$.
We say that $f$ satisfies a \emph{$\bar\gamma$-reverse {\L}ojasiewicz inequality} at ${\rm MAX}(f)$ if for every $\gamma<\bar\gamma$ and for any neighbourhood $U$ of ${\rm MAX}(f)$,
there exists a real number $c>0$ such that
\[|\nabla f|(x)\le c\big[f_{\rm max}-f(x)\big]^\gamma,\]
for every $x\in U$.
\end{definition}

\noindent
We recall that, if $f$ is smooth, the reverse {\L}ojasiewicz inequality holds with $\bar\gamma=1/2$ (see \cite[Theorem 2.2]{BoChMa}).

\begin{theorem}\label{stime gradiente beta=0}
Assume $a<b$ and $\beta=0$. Let $(\Om,u)$ be a solution to problem \eqref{sist a b alpha beta} and $R$ the comparison parameter given by Lemma \ref{lemma a<b}. Assume further that $u$ satisfies a $\frac{1}{p^2}$-reverse {\L}ojasiewicz inequality at $\Gamma_\mathrm{o}$.
Then
\begin{equation*}
    W\le W_R,\quad\text{in }\Omega.
\end{equation*}
Moreover, if there exists $x\in\Omega$ such that $W(x)=W_R(x)$, then $(\Om,u)$ coincides with $(\Om_R,u_R)$, up to translations.  
\end{theorem}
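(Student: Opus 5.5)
The plan is to follow the proof of Theorem \ref{stime gradiente} verbatim in the interior, and to change only the treatment of the outer boundary. With $w=|\N\f|_g^p-1$, the computation based on the $p$-Bochner formula \eqref{p bochner fi} and on system \eqref{sist g e fi} is purely local and does not use $\beta$. It therefore still gives
\[
P^{II}_\f(w)+a\langle\N w,\N\f\rangle_g+c\,w\ge0 \quad\text{in }\Omega, \qquad c\le0 .
\]
On $\Gamma_{\rm i}$ the computation \eqref{calc_sigma_in} is unchanged, so $w=0$ there. The only new difficulty is at $\Gamma_{\rm o}$. There $\beta=0$ forces $r_o=R$, so $W_R=\big((R^2-\Psi^2)/\Psi\big)^{p/(p-1)}$ vanishes and $w$ has no a priori boundary value. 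The goal is to prove
\[
\limsup_{x\to\Gamma_{\rm o}}w(x)\le0 .
\]
The weak maximum principle on the exhaustion $\{u<b-\e\}$, with $\e\to0$, then yields $w\le0$, that is $W\le W_R$.

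\textbf{Boundary behaviour.} Set $s:=b-u$, which tends to $0$ at $\Gamma_{\rm o}$. First expand the model quantities in $s$. Near $r=R$ one has $u_R(R)-u_R(r)\sim c_p(R-r)^{p/(p-1)}$, because $(R^2/\rho-\rho)^{1/(p-1)}\sim(2(R-\rho))^{1/(p-1)}$. Hence $R-\Psi\sim C s^{(p-1)/p}$, and therefore
\[
W_R\sim C' (R-\Psi)^{p/(p-1)}\sim C'' s .
\]
So $W_R$ decays exactly linearly in $b-u$. Next use the $\frac1{p^2}$-reverse {\L}ojasiewicz inequality: for every $\gamma<\frac1{p^2}$,
\[
W=|\D u|^p\le c\, s^{p\gamma} .
\]
This alone gives only $W/W_R\lesssim s^{p\gamma-1}$, which blows up. So the reverse inequality cannot close the argument directly.

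\textbf{Auxiliary function.} I would use the reverse inequality only to control the growth of $w$, and compare with a perturbed supersolution. I would apply the maximum principle to
\[
w_\delta:=w-\delta\,G(u),
\]
where $G\ge0$ is chosen to blow up at $u=b$ faster than $W/W_R$, for instance $G(u)=(b-u)^{-\kappa}$ with $\kappa>1-p\gamma$. Then $w_\delta\to-\infty$ at $\Gamma_{\rm o}$, while $w_\delta\le0$ on $\Gamma_{\rm i}$. The hard part is to check that $L(G)\le0$, where $L$ is the operator $P^{II}_\f+a\langle\N\cdot,\N\f\rangle_g+c$, so that $L(w_\delta)\ge0$. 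To do this I would write $L(G(u))$ through the equation $\Delta_pu=-2$; here $P^{II}_\f$ applied to a function of $u$ reduces to a one-dimensional expression in $G',G''$ and $|\D u|$. I would then choose $\kappa$ so that the sign of $G''$ dominates. It is precisely here that the restriction to small $\gamma$ exponents, $\frac1{p^2}$, should enter: it is what makes an admissible $\kappa$ compatible with the $|\N\f|_g^{p-2}$ weights, which may be singular when $p<2$. This compatibility check is where I expect the main obstacle.

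If the barrier fails, the fallback is to study directly the ODE-type behaviour of $\f$ along the gradient flow near $\Gamma_{\rm o}$, which is a critical set of $u$. The aim would be to show that $|\N\f|_g\to$ a limit $\le1$, using the integrated Pohozaev-type identity together with the reverse inequality. Once $W\le W_R$ is established, the rigidity statement follows exactly as in Theorem \ref{stime gradiente}. If $w(x)=0$ at an interior point, the strong maximum principle gives $w\equiv0$ in $\Omega$. Then \eqref{stima ell int} gives $\N^2\f\equiv0$, so the level sets of $u$ are umbilic, and therefore $u$ is radial.
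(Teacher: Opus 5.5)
The part of your proposal that is new relative to Theorem \ref{stime gradiente} is exactly the part you leave open, so there is a genuine gap. The rest of your setup is right. The interior inequality for $w$ does not involve $\beta$, $w=0$ on $\Gamma_{\rm i}$, and $W_R\sim\frac{2p}{p-1}(b-u)$ at $\Gamma_{\rm o}$. The reverse {\L}ojasiewicz inequality alone only gives $W/W_R\lesssim(b-u)^{p\gamma-1}$, so some barrier or weight is unavoidable.

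\textbf{Why ``$L(G)\le0$'' is not a checkable condition.} It is not a condition on $G$ alone, because the first-order coefficient of $L$ is
\[
a=-\big(2-p+2p\psi\dot\psi^{p-1}\big)|\N\f|_g^{p-2}-2(2-p)\psi\dot\psi^{p-1}|\N\f|_g^{-2}w-(p-2)|\N\f|_g^{-4}\langle\N w,\N\f\rangle_g .
\]
This contains the unknowns $w$ and $\N w$, which may blow up at $\Gamma_{\rm o}$. The singular quantities there are $\psi\dot\psi^{p-1}=\Psi^2/(R^2-\Psi^2)$ and $c$. The weights $|\N\f|_g^{p-2}$ are harmless, since $|\N\f|_g>0$ in $\Om$.

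\textbf{What the argument actually requires.} You must work at a positive interior maximum $x_0$ of $w-\delta G$. There $\langle\N w,\N\f\rangle_g=\delta G'|\N\f|_g^2$ and $w>\delta G$, and you must show
\[
\delta P^{II}_\f(G)+a\,\delta G'|\N\f|_g^2+c\,w<0 .
\]
Since $P^{II}_\f(G)=(p-1)G''|\N\f|_g^p+G'\Delta^{(g)}_p\f$, this quantity contains the positive terms $2(p-1)\psi\dot\psi^{p-1}\delta G'w+(2-p)\delta^2(G')^2$.

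Take $G=(R^2-\Psi^2)^{-\mu}$, which is your $(b-u)^{-\kappa}$ with $\kappa=\frac{(p-1)\mu}{p}$. Consider the regime $\Psi\to R$, $w\approx\delta G\gg1$, which cannot be excluded a priori. There the quantity is, to leading order, $\frac{4R^4}{(R^2-\Psi^2)^2}(\delta G)^2\big(\mu^2-(2-p)\mu-p\big)$. So the barrier needs $\mu<\frac{2-p+\sqrt{p^2+4}}{2}$. Your boundary requirement $\kappa>1-p\gamma$ with $p\gamma<\frac1p$ forces $\mu>1$. This is where $\frac1{p^2}$ really enters: it is what makes $\mu$ just above $1$ admissible. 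Exhibiting such a $\mu$ and checking the sign on all of $\Om$ is the content of the proof, and neither the main plan nor the fallback does it.

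\textbf{How the paper closes this step.} It works multiplicatively: $F=\eta(\f)w$ with $\dot\eta/\eta=m\psi\dot\psi^{p-1}$, i.e.\ $\eta=(R^2-\Psi^2)^{-m/2}$. This is $w$ divided by a blowing-up function of your type, with $\mu=-m/2$.
\begin{itemize}
\item The paper rewrites \eqref{stima ell int} for $F$, keeping the Hessian term via $|\N^2\f|_g^2\ge\frac12(\Delta_g\f)^2$ and \eqref{lapl g fi}.
\item This gives $P^{II}_\f(F)+a\langle\N F,\N\f\rangle_g+cF\ge dF^2$, and the choice $m=m_1(p)$ makes $c\le0$ and $d\ge0$, so Remark \ref{obs max pr} applies directly.
\item Since $\eta\to0$ at $\Gamma_{\rm o}$, $F\to0$ there by Lemma \ref{lemma a1 new} and the reverse inequality. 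This uses $\frac12<1+\frac{(p-1)m_1}{2p}<\frac1p$, which is exactly your condition $\kappa>1-p\gamma$.
\end{itemize}

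\textbf{Your route can be completed.} With the coefficients of \eqref{stima ell int}, the maximum-point inequality holds on all of $\Om$ for $1<\mu<\frac{2-p+\sqrt{p^2+4}}{2}$. But that computation is the proof, and you must supply it. The rigidity part of your argument is fine.
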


\proof
As pointed out in the previous proof, if $\beta=0$, $w$ is not well-defined on $\Gamma_{\rm o}$, in particular, $\frac{\partial u}{\partial\nu}=0$ on $\Gamma_{\rm o}$ so the outer boundary component is a curve of maximum points.
To overcome this difficulty we introduce an auxiliary function $\eta$, that is we rewrite the equation \eqref{stima ell int} in terms of
\[F\,:=\, \eta(\f) w\]
where $\eta$ is a positive auxiliary function to be chosen appropriately.
First of all, it is easy to see that
\begin{align*}
P^{II}_\f(F) & = |\nabla\f|^{p-2}\Delta_g F+(p-2)|\nabla\f|^{p-4}\nabla^2 F(\nabla\f,\nabla\f)\\
&=\eta |\nabla\f|^{p-2}\Delta_g w+2\dot\eta|\nabla\f|^{p-2}\langle\nabla w,\nabla\f\rangle+\ddot\eta w|\nabla \f|^p+\dot\eta w|\nabla\f|^{p-2}\Delta_g \f\\
&\quad+(p-2)\eta|\nabla\f|^{p-4}\nabla^2 w(\nabla\f,\nabla\f)+2(p-2)\dot\eta|\nabla\f|^{p-2}\langle\nabla w,\nabla\f\rangle\\
&\quad+(p-2)\ddot\eta\, w|\nabla\f|^p+(p-2)\dot\eta\,w\,|\nabla\f|^{p-2}\nabla^2\f(\nabla\f,\nabla\f)\\
&=\eta P^{II}_\f(w)+2(p-1)\dot\eta|\nabla\f|^{p-2}\langle\nabla w,\nabla\f\rangle+(p-1)\ddot\eta w|\N\f|^p+ 2\psi\dot\psi^{p-1}\dot\eta w^2
\end{align*}
where in the last equality we used that 
\begin{equation}\label{lapl g fi}
\Delta_g\f=2\psi\dot\psi^{p-1}w|\nabla\f|^{2-p}-\frac{p-2}{p}|\nabla\f|^{-p}\langle\nabla w,\nabla\f\rangle    
\end{equation}
and \eqref{espr  hess fi}. Now, replacing $P_\f^{II}(w)$ with the expression provided in \eqref{stima ell int}, and rewriting in terms of $F$, after some calculations we finally get
\begin{align}\notag
&P^{II}_\f(F)=\left[2p\frac{\psi^2(3R^2-\psi^2)}{(R^2-\psi^2)^2}-\frac{\dot\eta}{\eta}\left(2-p+2p\frac{\psi^2}{R^2-\Psi^2}\right) +(p-1)\frac{\ddot\eta}\eta-2(p-1)\left(\frac{\dot\eta}\eta\right)^2\right]
|\N\f|_g^p \,F\\\notag
&\quad+\left[2-p+2p\frac{\psi^2}{R^2-\psi^2}+2(p-1)\frac{\dot\eta}{\eta}\right]|\N\f|^{p-2}\langle\N F,\N\f\rangle+p\eta|\N\f|^{2p-4}|\N^2\f|^2\\  \label{Pii}
&\quad-\frac{2(p-2)}{\eta}\left[\frac{\psi^2}{R^2-\psi^2}+\frac{\dot\eta}{\eta}\right] \frac{F}{|\N\f|^2}\langle\N F,\N\f\rangle+\left[2(p-1)\frac{\psi^2}{R^2-\psi^2}+(p-2)\frac{\dot\eta}{\eta}\right]\frac{\dot\eta}{\eta^2}\,F^2.
\end{align}
By exploiting the estimate $|\nabla^2\f|_g^2\ge\frac{1}{2}(\Delta_g\f)^2$ together with \eqref{lapl g fi}, we finally get
\begin{equation*}
\begin{aligned}
P^{II}_\f(F)&\ge\left[(p-1)\frac{\ddot\eta}{\eta}-2(p-1)\left(\frac{\dot\eta}{\eta}\right)^2-\left(2-p+2p\psi\dot\psi^{p-1}\right)\frac{\dot\eta}{\eta}+2p\psi^2\frac{3R^2-\psi^2}{(R^2-\psi^2)^2}\right]|\nabla\f|_g^p F\\
&+\left[2-p+2p\psi\dot\psi^{p-1}+2(p-1)\frac{\dot\eta}{\eta} \right]|\nabla\f|_g^{p-2}\langle\nabla F,\nabla\f\rangle_g\\
&+\frac{2(2-p)}{\eta}\left[2\psi\dot\psi^{p-1}+\frac{3p-2}{2p}\frac{\dot\eta}{\eta}\right]\,F\, \frac{\langle\nabla F,\nabla\f\rangle_g}{|\nabla\f|_g^2}+\frac{1}{\eta}\left[p-2+ \frac{(p-2)^2}{2p}\right]\frac{\langle\nabla F,\nabla\f\rangle_g^2}{|\nabla\f|_g^4}\\
&+\frac{1}{\eta}\left[2\psi\dot\psi^{p-1}\left((2p-3)\frac{\dot\eta}{\eta}+p\psi\dot\psi^{p-1}\right)+(p-2)\frac{3p-2}{2p}\left(\frac{\dot\eta}{\eta}\right)^2\right] F^2.
\end{aligned}   \end{equation*}
Let us now rewrite the above inequality as
\[
P^{II}_\f(F)+a\,\langle\nabla F,\nabla\f\rangle_g+ c\, F\ge d F^2,
\]
where
\begin{align*}
c\,&:=\,
-\left[(p-1)\left(\frac{\dot\eta}{\eta}\right)'-(p-1)\left(\frac{\dot\eta}{\eta}\right)^2-\left(2-p+2p\psi\dot\psi^{p-1}\right)\frac{\dot\eta}{\eta}+p\frac{2\psi^2(3R^2-\psi^2)}{(R^2-\psi^2)^2}\right]|\nabla\f|_g^p,\\
d\,&:=\,\frac{1}{2p\,\eta}\left[(p-2)(3p-2)\left(\frac{\dot\eta}{\eta}\right)^2+4p\,(2p-3)\psi\dot\psi^{p-1}\frac{\dot\eta}{\eta}+4p^2\, \psi^2\dot\psi^{2p-2}\right].
\end{align*}
Let us set 
\begin{equation}\label{ode eta}
\frac{\dot\eta(\f)}{\eta(\f)}=m\psi\dot\psi^{p-1},   
\end{equation}
for some $m\in\R$ to be chosen later, and note that, since $\psi(u(x))=\Psi(x)=e^{\f(x)}$, the above ODE is solved by $\eta(\f)=(R^2-e^{2\f})^{-\frac{m}{2}}$ (which is positive inside the domain $\Omega$).
Using \eqref{ode eta}, the expressions of the coefficients $c$ and $d$ reduce to
\begin{align*}
c\,&=\,\frac{\psi^2}{(R^2-\psi^2)^2}\left[\Big((p-1)m^2+(3p-2)m+2p\Big)\psi^2-\Big((3p-4)m+6p\Big) R^2\right]|\nabla\f|_g^p,\\
d\,&=\frac{1}{2p\,\eta}\Big[(p-2)(3p-2)m^2+4p\,(2p-3) m+4p^2\,\Big] \psi^2\dot\psi^{2p-2}.
\end{align*}
It turns out that $d\geq0$ if and only if $m_1(p)\leq m\leq m_2(p)$ where $m_1(p)=\frac{-2p}{2p-3+\sqrt{p^2-4p+5}}$ and $m_2(p)=\frac{-2p}{2p-3-\sqrt{p^2-4p+5}}$ (note that $2p-3+\sqrt{p^2-4p+5}>0$ while $2p-3-\sqrt{p^2-4p+5}<0$ for any $p\in(1,2)$).
Therefore, a sufficient condition for having both 
$d\geq 0$ and $c\leq0$ is 
\[
\begin{cases}
    m^2+\frac{3p-2}{p-1}m+\frac{2p}{p-1}\le0,\\
    (3p-4)m+6p\ge0,\\
    m\in[m_1(p)\,,\,m_2(p)].
\end{cases}
\]
This system proves to have a range of admissible values; in particular, it is satisfied by $m=m_1(p)$.
All in all, choosing
$\eta(\f)=(R^2-e^{2\f})^{-\frac{m_1(p)}{2}}$,
which in particular gives
\[\eta\,=\,(R^2-\psi^2)^{-\frac{m_1(p)}{2}},\]
we have that $F$ satisfies the elliptic inequality
\[P^{II}_\f(F)+a\,\langle\nabla F,\nabla\f\rangle_g+ c\, F\ge0,\]
with $c\le0$.
We can thus invoke the Weak Maximum Principle (see Remark \ref{obs max pr}) and deduce that
\[\sup_{\Om} F\le\sup_{\partial\Om}F^+.\]
Note that $\partial\Om=\Gamma_{\rm i} \sqcup\Gamma_{\rm o}$ and in view of \eqref{calc_sigma_in}, it holds 
$${F}_{|\Sigma_{\rm i}}=(R^2-r_i^2)^{-\frac{m_1}{2}}\left(|D u|^p\dot\psi^p-1\right)=(R^2-r_i^2)^{-\frac{m_1}{2}}\left(|\alpha|^p\bigg(\frac{r_i}{R^2-r_i^2}\bigg)^\frac{p}{p-1}-1\right)=0.$$
At the same time, recalling \eqref{w su wr nabla phi} and rewriting $R^2-\Psi^2$ as $\Psi W_R^{\frac{p-1}p}$, we have that at each $y\in\Gamma_{\rm o}$ it holds 
\[\lim_{\Omega\ni x\to y}\,F\,=\lim_{\Omega\ni x\to y}
(R^2-\Psi^2)^{-\frac{m_1}{2}}\left(\frac W{W_R}-1\right)\,=
\lim_{\Omega\ni x\to y}\Psi^{-\frac{m_1}{2}}\left(\frac{W}{W_R^{\frac{(p-1)m_1}{2p}+1}}-W_R^{-\frac{(p-1)m_1}{2p}}\right).\]
Using Lemma \ref{lemma a1 new} and the $\frac1{p^2}$-reverse {\L}ojasiewicz inequality, we have that
\begin{align*}
\lim_{\Omega\ni x\to y}\,\frac{W(x)}{W_R^{\frac{(p-1)m_1}{2p}+1}(x)}&=\lim_{\Omega\ni x\to y}\,\frac{|\nabla u|^p}{\left(\frac{2p}{p-1}(u_{\rm max}-u)\right)^{\frac{(p-1)m_1}{2p}+1}}\\
&\le 
\lim_{\Omega\ni x\to y}c_1\,(u_{\rm max}-u)^{\gamma-\frac{(p-1)m_1}{2p}-1}.
\end{align*}
Since 
$1<p<2$, then $\frac{p}{p-1}< m_1(p)<-2$ and in turn
\[
\frac12
\,<\,
\frac{(p-1)m_1}{2p}+1
\,<\,
\frac1p.
\]
Therefore, since the $\frac{1}{p^2}$-reverse {\L}ojasiewicz inequality allows us to choose above an admissible exponent
$\gamma<1/p$
such that
$\gamma-{\frac{(p-1)m_1}{2p}-1}>0$, we obtain
\[\lim_{x\to y}\frac{W(x)}{W_R^{\,1+\frac{p-1}{2p}m_1(p)}(x)}=0.\]
Moreover, since the exponent $-\frac{(p-1)m_1}{2p}>0$, we have that $W_R^{-\frac{(p-1)m_1}{2p}}\to0$ as we approach $\Gamma_{\rm o}$. We conclude that $F(x)\to0$ as $x\to y\in\Gamma_{\rm o}$. Hence, we have that $F\leq0$ in $\overline\Omega$, which in particular implies $|\N\f|_g\leq1$ in $\Om$. The conclusion of the proof follows the same line as that of Theorem \ref{stime gradiente}.
\endproof

\begin{obs}\label{obs max pr}
{\rm 
For some given functions $a$ and $c$ (possibly depending on $\varphi$), let us consider the linear second-order operator
$$L_\varphi(f) := P^{II}_\varphi(f)+a\,\langle \nabla f,\nabla \varphi\rangle_g + c\,f,\quad\text{in }\Om$$
where $P^{II}_\varphi$ is defined as in \eqref{p_II:def}, $p>1$, and we suppose that $\N\f\not=0$ in $\Om$. If $c\le0$, then \(L_\varphi\) satisfies a weak maximum principle. More precisely, if
\[
L_\varphi(f)\ge 0, \quad\text{ in }\Om
\]
then, under the standard regularity assumptions on the coefficients,
\begin{equation}\label{sup <=}
\sup_{\Omega} f
\le
\sup_{\partial\Omega}f^{+}.    
\end{equation}

The sketchy proof of this result is the following. Let us first address the case $c=0$. One readily sees that if $L_\f(f)>0$, $f$ cannot achieve an interior maximum in $\bar\Om$. Indeed, if $q \in{\rm  int}\,\Om$ is a maximum point for $f$, then 
$$\nabla f(q)=0, \qquad\text{and}\qquad \nabla^2 f(q)\le 0.$$
Note that if $\{x,y\}$ are normal coordinates at $q$, then
\[
P^{II}_\varphi(f)_{|_q}
\,=\,
|\nabla\varphi|^{p-2}\Big\{
\left[(p-1)\varphi_x^2+\varphi_y^2\right]f_{xx}+2(p-2)...\Big\},
\]
and the coefficient matrix is positive definite. Indeed, it is sufficient that
it is nonnegative to deduce that  
\[P^{II}_\varphi(f)(q)\le 0.\]
In turn, this inequality implies that
$L_\f(f)(q)\le 0$, contradicting $L_\f(f)>0$.
Then the maximum of $f$ in $\bar\Om$ is achieved on $\partial\Om$, that is 
$$\sup_{\Om} f\,=\,\sup_{\partial\Om} f.$$
The case where $L_\f(f)\ge0$ in $\Om$, follows by a standard perturbation argument (exploiting the fact that the matrix is indeed definite positive definite at the point where $\N\f\not=0$).
Let us suppose now $c<0$. By considering the subset $\Om^+$ of $\Om$ where $f>0$, we observe that if $L_\f(f)\ge0$ then 
$$P_\f^{II}(f)+a\langle\N f,\N\f\rangle\ge-cf\ge0,\qquad\text{in }\Om^+$$
and hence, reducing to the previous case, the maximum of $f$ in $\bar\Om^+$ must be achieved on $\partial\bar\Om^+$ and hence also on $\partial\Om$. This concludes the proof of \eqref{sup <=}.

Moreover, since under our assumption $1/\lambda$ is locally bounded in $\Om$
then if $c\le0$ the Strong Maximum Principle holds as well (see \cite[Theorem 3.5]{GilTru}).
}
\end{obs}

\subsection{Case 2: external subdomain.}\label{sottosez 5.2}

If $a>b$ and $(\Om,u)$ is a solution to problem \eqref{sist a b alpha beta}, then from Proposition \ref{prop cond pohozaev}, it follows that
$$\alpha\ge0,\qquad \beta<0\qquad \text{and}\qquad a-b>\frac{p-1}{2p}\big(|\beta|^p-\alpha^p\big).$$
So if $a-b\le \frac{p-1}{p}\big(|\beta|^p-\alpha^p\big)$, then using Lemma \ref{lemma a>b} we can associate to $(\Om,u)$ a ring-shaped model solution $(\Om_R,u_R)$, and in particular the comparison parameters are such that $0<\,R\,\le r_i<\,r_o$ and $u_R:[r_i,r_o]\longrightarrow\R$ is defined as
\[u_R(r)\,:=\,a-\int_{r_i}^r\left(\rho-\frac{R^2}{\rho}\right)^\frac{1}{p-1}d\rho.\]
Analogously to before, we introduce the function
$\psi_o:[b,a]\to[r_i,r_o]$ as the inverse of $u_R$ and we get
\begin{equation}\label{psi' e psi'' esterne}
\dot\psi_o=-\left(\frac{\psi_o}{\psi_o^2-R^2}\right)^\frac{1}{p-1}\qquad \text{and}\qquad \ddot\psi_o=-\frac{2}{p-1}(-\dot\psi_o) ^{p+1}+\frac{1}{p-1}\frac{\dot\psi_o^2}{\psi_o}.  
\end{equation}
Now we define on the domain $\Omega$ the \textit{pseudo-radial function}
$$\Psi(x):=\psi_o\left(u(x)\right)$$
and the \textit{pseudo-affine function} 
$$\varphi(x):=-\log\left(\psi_o(u(x))\right).$$
Notice that the choice $-\log$ in the exterior region is only a matter of convention, chosen for notational convenience. Replacing it with $\log$ would not affect the gradient estimate.\\

Following the same argument as before, we rephrase our problem in terms of the new metric $g$.

\begin{prop}\label{pb phi esterno}
Let $a>b$ and $(\Om,u)$ be a solution to problem \eqref{sist a b alpha beta} such that $a-b\le \frac{p-1}{p}\big(|\beta|^p-\alpha^p\big)$.
Then, the metric $g$ and the pseudo-affine function $\f$ satisfy the following system of differential equations over $\Omega$
\begin{equation}\label{sist g e fi outer}
\begin{cases}
    \Delta_p^{(g)}\f=2\psi(-\dot\psi)^{p-1}\Big(|\N\f|^p_g-1\Big)\\
    \mathrm{Ric}_g=-(\Delta_g \f)\,g    
\end{cases}.
\end{equation}    
\end{prop}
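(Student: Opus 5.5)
The plan is to use that we are in dimension two and that $g=\Psi^{-2}g_{\R^2}=e^{2\f}g_{\R^2}$, because $\f=-\log\Psi$. This is exactly where the convention $\f=-\log\psi_o(u)$ helps: the conformal factor is $e^{2\f}$ rather than $e^{-2\f}$, as it was for the inner case of Proposition \ref{pb phi interno}. Both identities in \eqref{sist g e fi outer} then become explicit conformal-change computations for the $p$-Laplacian and the Gauss curvature, combined with the ODE \eqref{psi' e psi'' esterne} for $\psi=\psi_o$. Throughout, $\psi,\dot\psi,\ddot\psi$ are evaluated at $u(x)$, and we use that $\dot\psi<0$.

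\textbf{Ricci identity.} For $g=e^{2w}g_{\R^2}$ in dimension two we have $\mathrm{Ric}_g=K_g\,g$ with $K_g=-e^{-2w}\Delta w$. Moreover $\Delta_g f=e^{-2w}\Delta f$, by the conformal invariance of the Laplacian in dimension two. Taking $w=\f$ gives $K_g=-\Delta_g\f$, hence $\mathrm{Ric}_g=-(\Delta_g\f)\,g$, which is the second equation. As a consistency check, the inner case $w=-\f$ reproduces the sign in \eqref{sist g e fi}.

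\textbf{Equation for $\Delta^{(g)}_p\f$.} Since $\sqrt{\det g}=e^{2\f}$, $g^{ij}=e^{-2\f}\delta^{ij}$ and $|\N f|_g=e^{-\f}|{\rm D}f|$, we get
\[
\Delta_p^{(g)}f=e^{-2\f}\,{\rm div}\big(e^{(2-p)\f}|{\rm D}f|^{p-2}{\rm D}f\big).
\]
Applied to $f=\f$ this yields
\[
\Delta_p^{(g)}\f=e^{-p\f}\big[\Delta_p\f+(2-p)|{\rm D}\f|^p\big].
\]
Next write $\f=h(u)$ with $h=-\log\psi$, so that $h'=-\dot\psi/\psi>0$. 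The chain rule gives
\[
\Delta_p\f=(h')^{p-1}\Delta_pu+(p-1)(h')^{p-2}h''|{\rm D}u|^p ,
\]
and we use $\Delta_pu=-2$. We also record $e^{-p\f}=\psi^p$ and $|\N\f|_g^p=\psi^p|{\rm D}\f|^p=(-\dot\psi)^p|{\rm D}u|^p$.

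\textbf{Collecting terms.} The zero-order term is $\psi^p(h')^{p-1}(-2)=-2\psi(-\dot\psi)^{p-1}$, which is precisely the "$-1$" part of the claimed right-hand side. The terms proportional to $|{\rm D}u|^p$ are
\[
\psi^p\Big[(p-1)(h')^{p-2}h''+(2-p)(h')^p\Big]|{\rm D}u|^p .
\]
Here $h''=-\ddot\psi/\psi+\dot\psi^2/\psi^2$, and we insert $\ddot\psi=-\tfrac{2}{p-1}(-\dot\psi)^{p+1}+\tfrac{1}{p-1}\tfrac{\dot\psi^2}{\psi}$ from \eqref{psi' e psi'' esterne}. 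This gives
\[
(p-1)h''=\frac{2(-\dot\psi)^{p+1}}{\psi}+(p-2)\frac{\dot\psi^2}{\psi^2},
\]
and the $(p-2)$ contribution cancels exactly against $(2-p)(h')^p$. What remains is $\psi^p(h')^{p-2}\cdot 2(-\dot\psi)^{p+1}\psi^{-1}|{\rm D}u|^p$. Simplifying, this equals $2\psi(-\dot\psi)^{p-1}(-\dot\psi)^p|{\rm D}u|^p=2\psi(-\dot\psi)^{p-1}|\N\f|_g^p$, and adding the zero-order term gives the first equation of \eqref{sist g e fi outer}. The only delicate point is tracking signs through $\dot\psi<0$ and $h'>0$ in this cancellation. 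The radial case ($\Psi=|x|$, $g$ the cylinder metric, $\f=-\log r$ affine, $|\N\f|_g=1$) serves as a check that both sides vanish.
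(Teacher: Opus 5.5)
Your proposal is correct. With $g=e^{2\varphi}g_{\R^2}$, the two-dimensional identities $\mathrm{Ric}_g=-(\Delta_g\varphi)\,g$ and $\Delta_p^{(g)}\varphi=e^{-p\varphi}\big[\Delta_p\varphi+(2-p)|\mathrm{D}\varphi|^p\big]$ hold. After the chain rule for $\varphi=-\log\psi_o(u)$ and the ODE for $\ddot\psi_o$, the $(p-2)\dot\psi^2/\psi^2$ contributions cancel exactly as you claim, leaving $2\psi(-\dot\psi)^{p-1}\big(|\nabla\varphi|_g^p-1\big)$.

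This is the same direct conformal-change computation the paper intends. The paper only refers back to the inner case, where $\langle\mathrm{D}|\mathrm{D}u|^{p-2},\mathrm{D}u\rangle$ and $|\mathrm{D}u|^{p-2}\Delta u$ are converted separately through the conformal Hessian formula; your divergence form of $\Delta_p^{(g)}$ organizes the same calculation more economically. The hypothesis on $a-b$ is needed only so that Lemma \ref{lemma a>b} provides $R$ and $\psi_o$, with $\Psi>r_i\ge R$ and hence $\dot\psi<0$ in $\Omega$.
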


Also in the ``decreasing case'' we obtain the same gradient estimates.

\begin{theorem}\label{stime gradiente esterne}
Assume $a>b$, $\alpha\neq0$, and $a-b\le \frac{p-1}{p}\big(|\beta|^p-\alpha^p\big)$.  
Let $(\Om,u)$ be the solution to problem \eqref{sist a b alpha beta} and $R$ the comparison parameter associated to $(\Omega,u)$ by Lemma \ref{lemma a>b}.
Then 
\begin{equation}\label{W<=WR ramo esterno}
    W\le W_R,\quad\text{in }\Omega.
\end{equation}
Moreover, if there exists $x\in\Omega$ such that $W(x)=W_R(x)$ then $(\Om,u)$ is equivalent to $(\Om_R,u_R)$.    
\end{theorem}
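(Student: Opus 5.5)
The proof will mirror that of Theorem \ref{stime gradiente}, working now with the data of Proposition \ref{pb phi esterno}. Write $\psi=\psi_o$, so that $\dot\psi<0$. Set $w:=|\N\f|_g^p-1$ and note that $|\N\f|_g^p=W/W_R$, with $W_R=\big((\Psi^2-R^2)/\Psi\big)^{\frac p{p-1}}$. Since ${\rm Crit}(u)\cap\Om=\varnothing$, we have $|\N\f|_g>0$ in $\Om$, so the $p$-Bochner formula \eqref{p bochner fi} applies.

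\emph{Step 1: the differential inequality.} Insert $\Delta_p^{(g)}\f=2\psi(-\dot\psi)^{p-1}w$ and $\mathrm{Ric}_g=-(\Delta_g\f)g$ into \eqref{p bochner fi}. Then use identity \eqref{espr  hess fi} to express $\Delta_g\f$ and $\N^2\f(\N\f,\N\f)$ in terms of $w$ and $\langle\N w,\N\f\rangle_g$.

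The sign change in the Ricci term is compensated by two facts. First, $\dot\psi$ is replaced by $-\dot\psi$. Second, the convention $\f=-\log\psi$ gives $\N\f=-\frac{\dot\psi}{\psi}\D u$.

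The outcome should be an identity analogous to \eqref{stima ell int}:
\[
P^{II}_\f(w)+a\langle\N w,\N\f\rangle_g+c\,w=p|\N\f|_g^{2p-4}|\N^2\f|_g^2\ge 0 .
\]
Here $c$ is built from $\psi(-\dot\psi)^{p-1}$ and from $\psi^2(-\dot\psi)^{p-3}\ddot\psi$. Using the expression for $\ddot\psi$ in \eqref{psi' e psi'' esterne}, one can rewrite $c$ as a combination of $\psi(-\dot\psi)^{p-1}=\psi^2/(\psi^2-R^2)$ and its square.

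\emph{Step 2: the sign of $c$ — the main obstacle.} The crucial check is that $c\le0$. In the interior case every term had a favourable sign. Here the term $\ddot\psi$ carries an extra minus sign in front of $(-\dot\psi)^{p+1}$, so cancellations must be tracked carefully.

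If the plain quantity $w$ does not give $c\le0$, I would fall back on the device of Theorem \ref{stime gradiente beta=0}. That is, take $F=\eta(\f)w$ with $\dot\eta/\eta=m\psi(-\dot\psi)^{p-1}$, which gives $\eta=(\psi^2-R^2)^{\mp m/2}$. Then choose $m$ so that $c\le0$ and the $F^2$-coefficient $d\ge0$. Since $\Psi\ge r_i> R$ (or $\Psi\ge R$ with equality excluded inside), $\eta$ is positive and bounded on $\overline\Om$. Hence the sign of $F$ on the boundary is the sign of $w$.

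\emph{Step 3: boundary values.} On $\Gamma_{\rm i}$ we have $\Psi=r_i$ and $|\D u|=\alpha$. Since $\alpha\ne0$, Lemma \ref{lemma a>b} gives $r_i>R$. The defining relation $r_i^2-\alpha^{p-1}r_i-R^2=0$ then yields $|\N\f|_g=(-\dot\psi(a))\alpha=1$, so $w=0$ there. This is exactly where the hypothesis $\alpha\neq0$ is used.

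On $\Gamma_{\rm o}$ we have $\Psi=r_o>R$ and $|\D u|=|\beta|$. The relation $r_o^2-|\beta|^{p-1}r_o-R^2=0$ gives $w=0$ there as well.

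\emph{Step 4: conclusion.} The weak maximum principle of Remark \ref{obs max pr} gives $w\le0$, that is $W\le W_R$. If $W(x)=W_R(x)$ at an interior point, the strong maximum principle forces $w\equiv0$. The identity of Step 1 then gives $\N^2\f\equiv0$. As in the proof of Theorem \ref{stime gradiente}, the level sets of $u$ are then totally umbilic closed curves, hence circles, and $(\Om,u)$ coincides with $(\Om_R,u_R)$ up to translation.
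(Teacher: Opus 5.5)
Your fallback in Step 2 is exactly the paper's proof. However, the proposal leaves open the step you yourself single out as decisive, and the route you put first does not work.

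Write $h:=\psi(-\dot\psi)^{p-1}=\Psi^2/(\Psi^2-R^2)$, viewed as a function of $\varphi=-\log\Psi$. Then $h'=2h^2-2h$, whereas in the interior case it was $2h^2+2h$. Since $\mathrm{Ric}_g=-(\Delta_g\varphi)g$, the zeroth-order coefficient for plain $w$ is $c=-2p(h'-h)|\nabla\varphi|_g^p=\frac{2p\Psi^2(\Psi^2-3R^2)}{(\Psi^2-R^2)^2}|\nabla\varphi|_g^p$. This is positive wherever $\Psi>\sqrt3\,R$, hence on all of $\Omega$ as soon as $r_i>\sqrt3\,R$. So the ``compensation'' claimed in Step 1 is only partial, and the identity with $c\le0$ that you expect for $w$ is false in general. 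The $\eta$-device is therefore mandatory, not optional; the paper says precisely this.

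What is then missing is an admissible $m$. With $\dot\eta/\eta=mh$ one finds
\[
c=\frac{\Psi^2}{(\Psi^2-R^2)^2}\Big[\big((p-1)m^2+(3p-2)m+2p\big)\Psi^2-\big((3p-4)m+6p\big)R^2\Big]|\nabla\varphi|_g^p ,
\]
which is formally the same expression as in the interior case. The $F^2$-coefficient $d$ comes from $\Delta_p^{(g)}\varphi=2hw$ and $|\nabla^2\varphi|_g^2\ge\frac12(\Delta_g\varphi)^2$. It does not see the sign of the Ricci term, so it is unchanged, and $d\ge0$ exactly for $m\in[m_1(p),m_2(p)]$.

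The quadratic $(p-1)m^2+(3p-2)m+2p$ vanishes at $m=-\frac{p}{p-1}$ and $m=-2$, and $(3p-4)m+6p>0$ on that interval. Since $m_1(p)\in(-\frac{p}{p-1},-2)$, the choice $m=m_1(p)$ gives $c\le 0\le d$. This is the paper's choice, with $\eta=(\Psi^2-R^2)^{-m_1(p)/2}$, which is positive and bounded on $\overline\Omega$ because $r_i>R$. Once this is supplied, your Steps 3 and 4 (boundary values, weak and strong maximum principle, $\nabla^2\varphi\equiv0$, level sets are circles) coincide with the paper's argument.
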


\proof
Building on the reasoning presented in the proof of Theorem \ref{stime gradiente beta=0} using, in this case, the conformal reformulation of the problem given by Proposition \ref{pb phi esterno}, we obtain the following elliptic inequality for the quantity $F=\eta(\f)\,w$ 
\begin{align}\notag
P^{II}_\f(F)&\ge\left[(p-1)\frac{\eta''}{\eta}-2(p-1)\left(\frac{\eta'}{\eta}\right)^2-\left(p-2+2p\psi(-\dot\psi)^{p-1}\right)\frac{\eta'}{\eta}+p\psi^2\frac{6R^2-2\psi^2}{(\psi^2-R^2)^2}\right]|\nabla\f|_g^p F\\ \notag
&+\left[p-2+2p\psi(-\dot\psi)^{p-1}+2(p-1)\frac{\eta'}{\eta} \right]|\nabla\f|_g^{p-2}\langle\nabla F,\nabla\f\rangle_g\\ \notag
&+\frac{2(2-p)}{\eta}\left[2\psi(-\dot\psi)^{p-1}+\frac{3p-2}{2p}\frac{\eta'}{\eta}\right]\,F\, \frac{\langle\nabla F,\nabla\f\rangle_g}{|\nabla\f|_g^2}+\frac{1}{\eta}\left[p-2+ \frac{(p-2)^2}{2p}\right]\frac{\langle\nabla F,\nabla\f\rangle_g^2}{|\nabla\f|_g^4}\\ \label{disug P^II F est}
&+\frac{1}{\eta}\left[2\psi(-\dot\psi)^{p-1}\left((2p-3)\frac{\eta'}{\eta}+p\psi(-\dot\psi)^{p-1}\right)+(p-2)\frac{3p-2}{2p}\left(\frac{\eta'}{\eta}\right)^2\right] F^2.
\end{align}    
We point out that in this case we can not argue as in the proof of Theorem \ref{stime gradiente} (where $F=w$), since we would obtain a coefficient $c$ not having definite sign. The rest of the proof is analogous to the proof of Theorem \ref{stime gradiente beta=0}. In this case, we set 
\begin{equation*}
\frac{\eta'(\f)}{\eta(\f)}=m\psi(-\dot\psi)^{p-1} 
\end{equation*}
and we rewrite the above inequality as follows
\begin{equation}\label{disug ell Feta nel caso esterno}
P^{II}_\f(F)+a\,\langle\nabla F,\nabla\f\rangle_g+ c\, F\ge d F^2.    
\end{equation}
A sufficient condition for having both 
$d\geq 0$ and $c\leq0$ is 
\[\begin{cases}
    m^2+\frac{3p-2}{p-1}m+\frac{2p}{p-1}\le0,\\
    (3p-4)m+6p\ge0,\\
    m\in[m_1(p)\,,\,m_2(p)]
\end{cases}\]
where 
\begin{equation}
\label{eq:m_1}
m_1(p)=\frac{-2p}{2p-3+\sqrt{p^2-4p+5}},
\qquad m_2(p)=\frac{-2p}{2p-3-\sqrt{p^2-4p+5}}. 
\end{equation}
We choose $m=m_1(p)$, so that
$$\eta(\f)=\big(e^{-2\f}-R^2\big)^{-\frac{m_1(p)}{2}}.$$
We conclude using the maximum principle argument and noticing that in this case $F$ is well-defined on both the two boundary components, in particular, $F=0$ on $\partial\Om=\Gamma_{\rm i}\sqcup\Gamma_{\rm o}$.
\endproof

\begin{theorem}\label{stime gradiente alfa=0}
Assume $a>b$, $\alpha=0$, and $a-b\le \frac{p-1}{p}|\beta|^p$.  
Let $(\Om,u)$ be the solution to problem \eqref{sist a b alpha beta}
and $R$ the comparison parameter given by Lemma \ref{lemma a>b}.
Assume further that $u$ satisfies a $\frac{1}{p^2}$-reverse {\L}ojasiewicz inequality at $\Gamma_{\rm i}$, according to Definition \ref{RRLI}. 
Then
\begin{equation*}
    W\le W_R,\quad\text{in }\Omega.
\end{equation*}
Moreover, if there exists $x\in\Omega$ such that $W(x)=W_R(x)$, then $(\Om,u)$ coincides with $(\Om_R,u_R)$, up to translations.  
\end{theorem}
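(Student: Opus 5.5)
The plan is to combine the conformal reformulation of Proposition \ref{pb phi esterno} with the auxiliary-weight argument from the proof of Theorem \ref{stime gradiente beta=0}. The inner boundary $\Gamma_{\rm i}$ now plays the role that $\Gamma_{\rm o}$ played there. Since $\alpha=0$, Lemma \ref{lemma a>b} gives $r_i(R)=\frac{\alpha^{p-1}+\sqrt{\alpha^{2p-2}+4R^2}}{2}=R$. Hence $\Psi\to R$ on $\Gamma_{\rm i}$, so $W_R=\big(\frac{\Psi^2-R^2}{\Psi}\big)^{\frac p{p-1}}$ vanishes there and $w=|\N\f|_g^p-1$ is not a priori defined on $\Gamma_{\rm i}$. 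Note also that $\Gamma_{\rm i}\subset{\rm MAX}(u)$, since $u=a=\max u$ and $\partial_\nu u=0$ there.

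First, I set $F=\eta(\f)\,w$ and use inequality \eqref{disug P^II F est}. I choose $\eta'/\eta=m_1(p)\,\psi(-\dot\psi)^{p-1}$, which gives $\eta(\f)=(e^{-2\f}-R^2)^{-\frac{m_1(p)}2}=(\Psi^2-R^2)^{-\frac{m_1(p)}2}$, positive in $\Omega$. By the computation in the proof of Theorem \ref{stime gradiente esterne}, this yields
\[
P^{II}_\f(F)+a\langle\N F,\N\f\rangle_g+cF\ge dF^2\ge 0,\qquad c\le 0 .
\]
The weak maximum principle of Remark \ref{obs max pr} (applicable because ${\rm Crit}(u)\cap\Om=\varnothing$) then gives $\sup_\Om F\le\sup_{\partial\Om}F^+$, provided $F$ has a continuous extension to $\partial\Om$.

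Next come the boundary values. On $\Gamma_{\rm o}$ we have $\Psi=r_o>R$, and the computation analogous to \eqref{calc_sigma_in}, using $r_o^2-|\beta|^{p-1}r_o-R^2=0$, gives $|\N\f|_g=1$, hence $F=0$. The main step is $\Gamma_{\rm i}$. I write $\Psi^2-R^2=\Psi W_R^{\frac{p-1}p}$, so that
\[
F=\Psi^{-\frac{m_1}2}\Big(W\,W_R^{-1-\frac{(p-1)m_1}{2p}}-W_R^{-\frac{(p-1)m_1}{2p}}\Big).
\]
The second term tends to $0$ because $-\frac{(p-1)m_1}{2p}>0$.

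For the first term, I need the outer analogue of Lemma \ref{lemma a1 new}: $W_R\sim\frac{2p}{p-1}(a-u)$ as $u\to a$. I will verify this directly, since $u_R$ has a nondegenerate $\frac p{p-1}$-type expansion at $r=R$. From $a-u_R(r)=\int_R^r(\rho-R^2/\rho)^{\frac1{p-1}}d\rho\approx \frac{p-1}{p}2^{\frac1{p-1}}(r-R)^{\frac p{p-1}}$ and $W_R\approx (2(r-R))^{\frac p{p-1}}$, the ratio is $\frac{2p}{p-1}\cdot\frac{2^{\frac1{p-1}}}{2^{\frac p{p-1}}}\cdots$, and I will check that the constant matches. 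The $\frac1{p^2}$-reverse {\L}ojasiewicz inequality at $\Gamma_{\rm i}$ gives $W\le c(a-u)^{\gamma}$ for every $\gamma<\frac1p$; the relevant quantity is $|\nabla u|^p\le c(a-u)^{p\gamma'}$ with $\gamma'<\frac1{p^2}$, as in the proof of Theorem \ref{stime gradiente beta=0}. Since $\frac12<1+\frac{(p-1)m_1}{2p}<\frac1p$, I can choose $\gamma$ with $\gamma-1-\frac{(p-1)m_1}{2p}>0$, so the first term tends to $0$. Thus $F\to0$ on $\Gamma_{\rm i}$, and $F\le0$ in $\Om$, that is, $W\le W_R$.

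Finally, rigidity. If $W(x)=W_R(x)$ at an interior point, then $F(x)=0$ is an interior maximum, and the strong maximum principle gives $F\equiv0$, so $|\N\f|_g\equiv1$. From the identity underlying \eqref{disug P^II F est}, the term $p\eta|\N\f|^{2p-4}|\N^2\f|^2$ must then vanish, so $\N^2\f\equiv0$. The level sets of $\f$ are therefore totally geodesic for $g$, and the conformal relation between second fundamental forms shows that the level sets of $u$ are totally umbilic closed curves, i.e.\ circles, exactly as in Theorem \ref{stime gradiente}. I expect the main obstacle to be the boundary behaviour at $\Gamma_{\rm i}$: matching the model asymptotics of $W_R$ near $r=R$ from the decreasing side, and checking that the exponent inequality for $m_1(p)$ still leaves room for the reverse {\L}ojasiewicz exponent.
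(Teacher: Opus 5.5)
Your proposal is correct and follows essentially the paper's route: the paper's proof simply reruns the argument of Theorem \ref{stime gradiente beta=0} with the outer inequality \eqref{disug P^II F est} and the weight $\eta=(\Psi^2-R^2)^{-m_1(p)/2}$, with the degenerate boundary now being $\Gamma_{\rm i}$ (where $r_i=R$ because $\alpha=0$), exactly as you do. The only blemish is your half-written ratio in the outer analogue of Lemma \ref{lemma a1 new}, which should read $2^{\frac p{p-1}}\big/\big(\tfrac{p-1}{p}\,2^{\frac1{p-1}}\big)=\tfrac{2p}{p-1}$; in any case, only the comparability of $W_R$ with $a-u$ near $\Gamma_{\rm i}$ is needed.
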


\proof
We argue as in the proof of Theorem \ref{stime gradiente beta=0}, exploiting the elliptic inequality \eqref{disug P^II F est}.
\endproof

\subsection{Auxiliary results.}

We collect here some tools that we have used in order to prove the gradient estimates in the case when one of the two boundary components of $\Omega$ is a curve of maximum points (see Theorem \ref{stime gradiente beta=0} and Theorem \ref{stime gradiente alfa=0}).

\begin{lemma}\label{lemma a1 new}
Let $(\Om,u)$ be a solution to problem \eqref{sist interno} (analogously for a solution to \eqref{sist esterno}). Let $R$ be the comparison parameter associated to $(\Omega,u)$.
Then, for every $y\in\Sigma$ it holds
$$\lim_{x\to y,\,\,x\in\Omega}\,\frac{W_R}{u_{\rm max}-u}=\frac{2p}{p-1}.$$  
The same result holds when we consider a solution to \eqref{sist esterno}, in this case, we have to assume that $u_{\rm max}< \frac{p-1}{p}|\beta|^p$. 
\end{lemma}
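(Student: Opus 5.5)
The strategy is a direct one-variable computation. The comparison function $W_R$ depends on $x$ only through $\Psi(x)=\psi_i(u(x))$, and $u_{\rm max}-u(x)$ can be rewritten as $u_R(R)-u_R(\Psi(x))$. So the statement reduces to the asymptotics of the explicit radial profile near the radius where its derivative vanishes.

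Consider first the interior problem \eqref{sist interno}. Corollary \ref{lemma a<b caso beta=0} gives $r_o=R$ and $u_R(R)=M=u_{\rm max}$. Since $u_R$ is a bijection $[r_i,R]\to[0,M]$ with inverse $\psi_i$, we have
\[
u_{\rm max}-u(x)=u_R(R)-u_R(\Psi(x))=\int_{\Psi(x)}^{R}\Big(\frac{R^2}{\rho}-\rho\Big)^{\frac1{p-1}}d\rho .
\]
By \eqref{defWW_R}, $W_R(x)=\big(\frac{R^2-\Psi^2}{\Psi}\big)^{\frac p{p-1}}$.

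Next, as $x\to y\in\Sigma$ we have $u(x)\to M$ by continuity of $u$ up to the boundary. Continuity of $\psi_i$ then gives $\Psi(x)\to R$. It therefore suffices to prove the one-variable limit
\[
\lim_{s\to R^-}\frac{\big(\frac{R^2-s^2}{s}\big)^{\frac p{p-1}}}{\int_s^R\big(\frac{R^2}{\rho}-\rho\big)^{\frac1{p-1}}d\rho}=\frac{2p}{p-1}.
\]
Both numerator and denominator vanish, so we apply l'H\^opital in $s$. Write $h(s)=\frac{R^2-s^2}{s}$. The derivative of the numerator is $\frac p{p-1}h(s)^{\frac1{p-1}}h'(s)$, with $h'(s)=-\frac{R^2}{s^2}-1\to-2$. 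The derivative of the denominator is $-h(s)^{\frac1{p-1}}$. The common factor $h^{\frac1{p-1}}$ cancels, and the ratio tends to $\frac p{p-1}\cdot 2=\frac{2p}{p-1}$.

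An equivalent check avoids l'H\^opital. Since $h(\rho)\sim 2(R-\rho)$, the integral is asymptotic to $2^{\frac1{p-1}}\frac{p-1}{p}(R-s)^{\frac p{p-1}}$, while the numerator is asymptotic to $2^{\frac p{p-1}}(R-s)^{\frac p{p-1}}$. Their quotient is again $\frac{2p}{p-1}$.

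For the exterior problem \eqref{sist esterno}, the extra hypothesis $u_{\rm max}<\frac{p-1}{p}|\beta|^p$ puts us in case ii of Corollary \ref{lemma a>b alfa=0}. This gives $0<R<r_o$, the comparison solution satisfies $u_R(R)=M$, and $\Sigma$ corresponds to the radius $R$, where $\rho-\frac{R^2}{\rho}$ vanishes. Here $\Psi=\psi_o(u)\to R^+$, and $W_R=\big(\frac{\Psi^2-R^2}{\Psi}\big)^{\frac p{p-1}}$, the analogue of \eqref{defWW_R} with $\dot\psi_o$ from \eqref{psi' e psi'' esterne}. Moreover
\[
u_{\rm max}-u=\int_R^{\Psi}\Big(\rho-\frac{R^2}{\rho}\Big)^{\frac1{p-1}}d\rho ,
\]
and the same computation, with $\frac{d}{ds}\frac{s^2-R^2}{s}\to 2$, yields the same limit $\frac{2p}{p-1}$.

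The argument has no real obstacle. The only points to verify are that $\Psi\to R$ as $x\to\Sigma$ (continuity of $u$ and $\psi$ up to the boundary) and that the model's maximum level coincides with $u_{\rm max}$. The latter holds by construction of the comparison parameters, and in the exterior case this is exactly where the hypothesis on $u_{\rm max}$ is needed.
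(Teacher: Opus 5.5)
Your proposal is correct and matches the paper's proof: you rewrite $u_{\rm max}-u$ as $\int_{\Psi}^{R}\big(\frac{R^2}{\rho}-\rho\big)^{\frac1{p-1}}d\rho$, reduce to a one-variable limit as $\Psi\to R^-$, and apply l'H\^opital's rule. Your explicit treatment of the exterior case, with $W_R=\big(\frac{\Psi^2-R^2}{\Psi}\big)^{\frac p{p-1}}$ and the model from Corollary \ref{lemma a>b alfa=0}\,ii, correctly fills in what the paper only states as analogous.
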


\proof
Recall that 
$$W_R=\left(\frac{R^2-\Psi^2}{\Psi}\right)^\frac{p}{p-1} \qquad\text{and}\qquad u(x)=\int_{r_i}^{\Psi(x)} \left(\frac{R^2}{\rho}-\rho\right)^\frac{1}{p-1}d\rho.$$
We have 
$$\frac{W_R}{u_{\rm max}-u}=\left(\frac{R^2-\Psi^2}{\Psi}\right)^\frac{p}{p-1}\frac{1}{\int\limits_{\Psi(x)}^R \left(\frac{R^2}{\rho}-\rho\right)^\frac{1}{p-1}d\rho},$$
hence using L'Hôpital's rule
$$\lim_{x\to y,\,\,x\in\Omega}\,\frac{W_R}{u_{\rm max}-u}=
\lim_{\xi\to R}\,\frac{\xi^{-\frac{p}{p-1}}(R^2-\xi^2)^\frac{p}{p-1}}{\int\limits_{\xi}^R \left(\frac{R^2}{\rho}-\rho\right)^\frac{1}{p-1}d\rho}
=R^{-\frac{p}{p-1}} \lim_{\xi\to R}\,\frac{-\frac{2p}{p-1}\,\xi\,(R^2-\xi^2)^{\frac{p}{p-1}-1} }{-\left(\frac{R^2}{\xi}-\xi\right)^\frac{1}{p-1}}=\frac{2p}{p-1}.$$
\endproof

Finally, the condition introduced in the following definition will be useful in dealing with solutions near their maximum points, for sufficiently small $p\in(1,2)$.

%-----------------------------------------------------------------------

\section{Characterization of $p$-Serrin solutions}\label{sez6}

In this section, let us consider problem \eqref{nostro problema} in the more general case where we do not require that $\Om$ is a ring-shaped domain. 
The above-mentioned result by Garofalo and Lewis \cite{GarofaloLewis} (in the spirit of the classical result due to Serrin \cite{Serrin}) states that, if the pair $(\Om,u)$ solves \eqref{nostro problema} and the normal derivative is constant on $\partial\Om$, then necessarily $\Om$ is a ball and $u$ is rotationally symmetric. 

Next, we prove a characterization of $p$-Serrin solutions in terms of the quantity $\max_{\partial\Om}\frac{|\nabla u|^p}{u_{\rm max}}$. 

\begin{theorem}\label{teo p serrin omega}
Let $\Omega$ be a bounded planar domain with smooth boundary, and let $u$ be the solution to problem \eqref{nostro problema} in $\Omega$. 
Assume that 
\begin{equation}\label{stima per p serrin}
\max_{\partial\Om}\frac{|\nabla u|^p}{u_{\rm max}}\le\frac{p}{p-1},
\end{equation} 
and that Assumption {\rm \blu{I}} holds. Assume further that $u$ satisfies a
$\frac{2-p}{p^2}$-reverse {\L}ojasiewicz inequality at ${\rm MAX}(u)$, according to Definition \ref{RRLI}. 
Then, $u$ is rotationally symmetric and $\Om$ is a disk.
\end{theorem}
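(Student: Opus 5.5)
The plan is to run the comparison scheme of Section \ref{sez stime grad}, now with the $p$-Serrin solution as the model. The comparison function is $\mathfrak u_0^\lambda$, with $\lambda$ fixed by $(\mathfrak u_0^\lambda)_{\rm max}=u_{\rm max}$, that is $\lambda^{\frac p{p-1}}\frac{p-1}{p}=u_{\rm max}$. On $B_\lambda$ the model satisfies the explicit identity
\[
|\nabla \mathfrak u_0^\lambda|^{p}=|x|^{\frac p{p-1}}=\frac p{p-1}\big(u_{\rm max}-\mathfrak u_0^\lambda\big).
\]
This suggests considering the P-function
\[
P:=|\nabla u|^p-\frac p{p-1}\big(u_{\rm max}-u\big),
\]
which vanishes identically for the model. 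Equivalently, we use the pseudo-radial function $\Psi=\psi(u)$, with $\psi$ the inverse of the radial profile $r\mapsto\frac{p-1}p(\lambda^{\frac p{p-1}}-r^{\frac p{p-1}})$, together with the metric $g=\Psi^{-2}g_{\R^2}$ and $\f=\log\Psi$. Then $|\N\f|_g^p=W/W_0$ with $W_0=\frac p{p-1}(u_{\rm max}-u)$, and the goal is to show $W\le W_0$ in $\Omega\setminus{\rm MAX}(u)$ with rigidity in the equality case.

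\textbf{Step 1: elliptic inequality.} On $\Omega\setminus{\rm MAX}(u)$ we have $\nabla u\neq0$ by Assumption I, so $u$ is analytic there. As in Proposition \ref{pb phi interno}, $\f$ should solve $\Delta^{(g)}_p\f=c(\psi)\,(|\N\f|_g^p-1)$ and ${\rm Ric}_g=\pm(\Delta_g\f)g$, with the coefficients now coming from the profile of $\mathfrak u_0$. Applying the $p$-Bochner formula \eqref{p bochner fi} to $w=|\N\f|^p_g-1$, or to $F=\eta(\f)w$ with $\eta$ solving an ODE of the type \eqref{ode eta}, should give
\[
P^{II}_\f(F)+a\langle\N F,\N\f\rangle_g+cF\ge dF^2
\]
with $c\le0$ and $d\ge0$. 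Choosing the exponent $m$ in $\eta=\Psi^{-m}$ (the analogue of $(R^2-\Psi^2)^{-m/2}$ when $R=0$) so that both signs hold is a repetition of the algebra in Theorem \ref{stime gradiente beta=0}. I expect the admissible range of $m$ to be what forces the exponent $\frac{2-p}{p^2}$.

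\textbf{Step 2: boundary behaviour and maximum principle.} On $\partial\Omega$ we have $u=0$, so $W_0=\frac p{p-1}u_{\rm max}$ and hypothesis \eqref{stima per p serrin} gives exactly $w\le0$ there. At ${\rm MAX}(u)$ both $W$ and $W_0$ vanish, so this is the delicate part. Writing
\[
F\sim\Psi^{-m}\Big(\frac{W}{W_0}-1\Big),
\]
and noting that $\Psi^{-m}$ is comparable to a power of $W_0\sim u_{\rm max}-u$, the term $\Psi^{-m}W/W_0$ should tend to $0$ by the $\frac{2-p}{p^2}$-reverse Łojasiewicz inequality $|\nabla u|\le c(u_{\rm max}-u)^\gamma$ for some $\gamma<\frac{2-p}{p^2}$ arbitrarily close to it. The term $\Psi^{-m}$ should tend to $0$ because the exponent has the right sign. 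This matching of exponents is where I expect the main obstacle: one must check that the admissible interval of $m$ from Step 1 yields a condition on $\gamma$ strictly below $\frac{2-p}{p^2}$. If $F=w$ alone works, with $c\le0$, the limit at ${\rm MAX}(u)$ would require $W/W_0\to$ something $\le1$, which is not available, so the weight $\eta$ is necessary. The weak maximum principle of Remark \ref{obs max pr} on the open set $\Omega\setminus{\rm MAX}(u)$ then gives $F\le0$, that is $W\le W_0$.

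\textbf{Step 3: rigidity.} Integrating the inequality $W\le W_0$ along gradient flow lines should give an equality somewhere. Concretely, from $|\nabla(u_{\rm max}-u)^{\frac{p-1}p}|\le C$ with the model constant, the distance from $\partial\Omega$ to ${\rm MAX}(u)$ is at least $\lambda$. On the other hand, a Pohozaev or area argument using \eqref{POHOZAEV} with $u=0$ on $\partial\Omega$ compares $|\Omega|$ with $\pi\lambda^2$. An alternative is to use the equation integrated on superlevel sets, $\int_{\{u=t\}}|\nabla u|^{p-1}=2|\{u>t\}|$, combined with $W\le W_0$ and the isoperimetric inequality, to force equality. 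Once equality $W=W_0$ holds at an interior point, the strong maximum principle gives $w\equiv0$, and then $\N^2\f\equiv0$ by the analogue of \eqref{stima ell int}. The level sets of $u$ are then totally umbilic closed curves, that is concentric circles, so $\Omega$ is a disk and $u=\mathfrak u_0^\lambda$ up to translation.
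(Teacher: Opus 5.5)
Your Steps 1 and 2 follow the paper's proof. The paper takes $\eta=\Psi^{-m_1(p)}$, i.e.\ $F=\Psi^{-m_1(p)-\frac p{p-1}}(W-W_0)$ with $m_1(p)$ as in \eqref{eq:m_1}, and obtains $c\le0$, $d=0$. It uses \eqref{stima per p serrin} on $\partial\Om$. At ${\rm MAX}(u)$ the reverse {\L}ojasiewicz exponent enters only through $1+\frac{p-1}{p}m_1(p)<\frac{2-p}{p}$, which follows from $m_1(p)<-2$.

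\textbf{The gap is Step 3.} None of the mechanisms you list can produce a point of $U=\Om\setminus{\rm MAX}(u)$ where $W=W_0$. They are all consequences of $W\le W_0$ and point the same way.
\begin{itemize}
\item The bound $|\N\Psi|=(W/W_0)^{1/p}\le1$ gives ${\rm dist}({\rm MAX}(u),\partial\Om)\ge\lambda$, hence $|\{u>t\}|\ge\pi\Psi(t)^2$ and $|\Om|\ge\pi\lambda^2$. The coarea-plus-isoperimetric argument gives the same thing.
\item The identity $2|\Om|=\int_{\partial\Om}|\N u|^{p-1}\le\lambda|\partial\Om|$ and the isoperimetric inequality are both lower bounds on $|\partial\Om|$.
\item \eqref{POHOZAEV} does not help. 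With $u=0$ on $\partial\Om$ and $\int_\Om W=2\int_\Om u$ it reads $\frac{p-1}{p}\int_{\partial\Om}W\langle X,\nu\rangle\,d\sigma=\frac{2(3p-2)}{p}\int_\Om u$. The pointwise bound \eqref{stima per p serrin} can be inserted only if $\langle X,\nu\rangle\ge0$. Even then it gives $\int_\Om u\le\frac{p}{3p-2}u_{\rm max}|\Om|$, which already follows from $W\le W_0$ by integrating the monotonicity of $U(t)$ in Proposition \ref{prop2.3new}. Weinberger's route works only because constant Neumann data make the boundary term computable.
\end{itemize}
Closing the argument would require a reverse inequality such as $|\Om|\le\pi\lambda^2$, and nothing in the hypotheses supplies one. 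Equality at points of ${\rm MAX}(u)$ does not feed the strong maximum principle either, since those points are not in $U$.

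\textbf{How the paper handles this step.} It asserts that $F\to0$ at ${\rm MAX}(u)$ gives $F\equiv0$ in $U$, and then runs the umbilicity argument of Theorem \ref{stime gradiente}. You were right not to take this for granted. ${\rm MAX}(u)$ is part of $\partial U$, and the inequality $P^{II}_\f(F)+a\langle\N F,\N\f\rangle_g+cF\ge0$ is available only in $U$. At ${\rm MAX}(u)$ the weight $\Psi^{-m_1(p)-\frac p{p-1}}$ and the metric $g$ degenerate, and ${\rm MAX}(u)$ sits at infinite $g$-distance. Neither the weak nor the strong maximum principle on $U$ excludes $F<0$ in $U$ with $F\to0$ at ${\rm MAX}(u)$. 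For $p=2$ the step is legitimate because $|\N u|^2+2u$ is subharmonic across critical points and so attains an interior maximum.

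\textbf{What is missing.} Both your proposal and the written proof need an argument that propagates the equality from ${\rm MAX}(u)$ into $U$. This could be a strong maximum principle or Hopf-type statement for $\frac{p-1}{p}|\N u|^p+u$ across ${\rm Crit}(u)={\rm MAX}(u)$, or a global identity reversing one of the inequalities above. As it stands, Step 3 does not establish the rigidity.
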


\proof
The proof is based on a maximum principle argument. 
First, consider the function 
$$F:=\Psi^{-m_1(p)-\frac{p}{p-1}}(W-W_0),$$
where 
$$W_0:=|\nabla u_0|^p\circ \Psi=\Psi^\frac{p}{p-1},$$
$m_1(p)$ is defined in \eqref{eq:m_1} and $\Psi(x):=\psi_o\big(u(x)\big)$ is defined as in Subsection \ref{sottosez 5.2} assuming $R=0$, in this case $\psi_o:[0,u_{\rm max}]\to [0,r_o]$ where $r_o=\left(\frac{p}{p-1} u_{\rm max}\right)^\frac{p-1}{p}$.  Notice that in what follows also $w$ and $\varphi$ are defined as in Subsection \ref{sottosez 5.2}.
The function $F$ satisfies the following elliptic inequality 
$$P^{II}_\f(F)+a\,\langle\nabla F,\nabla\f\rangle_g+cF\ge0,\qquad\text{in }\,\,U:=\Om\setminus {\rm MAX}(u)$$
To obtain it, we argue as in the proof of Theorem \ref{stime gradiente esterne} setting $R=0$.
In this case it holds $\dot\psi=-\psi^{-\frac{1}{p-1}}$, hence $\frac{\eta'(\f)}{\eta(\f)}=m\in \R$. In particular, we can fix $m=m_1(p)$, so that $F=\Psi^{-m_1} w=\Psi^{-m_1(p)-\frac{p}{p-1}}(W-W_0)$, and moreover $c\le0$ and $d=0$ in the inequality \eqref{disug ell Feta nel caso esterno}. 
Since
$$W-W_0=|\nabla u|^p+\frac{p}{p-1}u-\frac{p}{p-1}u_{\rm max}$$
by the weak maximum principle, we get
$$\max_{U} F=\max_{\partial U}F=\max\left\{\max_{\partial \Om}F\,,\, \max_{{\rm MAX}(u)}F\right\}.$$
Notice that since $u=0$ on $\partial\Om$
$$\max_{\partial\Om}F=\max_{\partial \Om}\Big[\Psi^{-m_1(p)-\frac{p}{p-1}} \Big( |\nabla u|^p-\frac{p}{p-1}u_{\rm max}\Big)\Big]\le 0$$
where in the last inequality we exploited the fact that $\Psi\ge0$ and assumption \eqref{stima per p serrin}.
On the other hand, 
$$F=\frac{W}{W_0^{1+\frac{p-1}{p}m_1}}-W_0^{-\frac{p-1}{p}m_1}$$
we want to study the limit of this quantity as it approaches ${\rm MAX}(u)$. 
For any $x\in\Om$ it holds $\frac{W_0}{u_{\rm max}-u}=\frac{\Psi^\frac{p}{p-1}}{\frac{p-1}{p}\Psi^\frac{p}{p-1}}=\frac{p}{p-1}$, 
so we get, for every $y\in{\rm MAX}(u)$,
\begin{equation}\label{lim W su W0 usa loj}
\lim_{\Omega\ni x\to y}\,\frac{W(x)}{W_0^{1+\frac{p-1}{p}m_1}(x)}=\lim_{\Omega\ni x\to y}\,\frac{|\nabla u|^p}{\left(\frac{p}{p-1}(u_{\rm max}-u)\right)^{1+\frac{p-1}{p}m_1}}
\leq 
\lim_{\Omega\ni x\to y}c\,(u_{\rm max}-u)^{\gamma-\frac{(p-1)m_1}{p}-1},
\end{equation}
where in the inequality
we have used the 
$\frac{2-p}{p^2}$-reverse {\L}ojasiewicz inequality.
It is easy to check that \(m_1(p)<-2\) for every $p\in(1,2)$, so that 
\[
\frac{p-1}{p}m_1(p)
+1
<\frac{2-p}{p}.\]
Hence, by the
$\frac{2-p}{p^2}$-reverse {\L}ojasiewicz inequality, we may choose above an admissible exponent 
$\gamma<\frac{2-p}p$ such that
$\gamma
-\frac{(p-1)m_1(p)}{p}-1>0$.
Consequently, 
\[
\lim_{x\to y}\frac{W(x)}{W_0^{\,1+\frac{p-1}{p}m_1(p)}(x)}=0.
\]
We have thus concluded that $F=0$ at ${\rm MAX}(u)$. In turn, 
\[
F=\Psi^{-m_1(p)-\frac{p}{p-1}}\bigg(|\nabla u|^p+\frac{p}{p-1}u-\frac{p}{p-1}u_{\rm max}\bigg)\equiv0 \quad\text{ in }\,U,
\]
and 
hence 
$$|\nabla u|^p+\frac{p}{p-1}u-\frac{p}{p-1}u_{\rm max}\equiv0 \quad\text{ in }\Om.$$
The fact that $W-W_0\equiv0$ in $\Om$ implies that $|\nabla\f|_g\equiv1$ in $\Om$, that is $w\equiv0$ in $\Om$. The remainder of the proof proceeds along the same lines as the rigidity argument in the proof of Theorem \ref{stime gradiente}. 
\endproof

We next define the auxiliary function $U$, whose monotonicity property will play a key role in the proof of Theorem \ref{teo2.1NEW}.

\begin{prop}\label{prop2.3new}
Let $(\Om,u)$ be a solution to \eqref{nostro problema} where $\Om$ is a bounded domain with smooth boundary. Assume that ${\rm Crit}(u)\setminus{\rm MAX}(u)=\varnothing$ and if $N$ connected component of $\Om\setminus{\rm MAX}(u)$ it holds
\[
\max_{\partial\Om\cap\overline N}\frac{|\nabla u|^p}{u_{\rm max}}\,\leq\,\frac p{p-1}.
\]
Then,
\begin{itemize}
\item[i)] the function $U:[0,u_\mathrm{max})\to\R$ defined as 
$$U(t):=\frac{1}{(u_{\rm max}-t)^{2\big(\frac{p-1}p\big)}}\int\limits_{\{u=t\}\cap \overline{N}} |\nabla u|^{p-1}d\sigma.$$
is non-increasing. 
\item[ii)] If $u$ satisfies a {\L}ojasiewicz-type inequality at ${\rm MAX}(u)$ (see Definition \ref{ass_Loja}), and $\mathcal{H}^{1}(\mathrm{MAX}(u)\cap\overline{N})>0$ then 
\begin{equation}\label{limU}
    \lim_{t\to u^-_\mathrm{max}}U(t)=+\infty.
\end{equation}
\end{itemize}
\end{prop}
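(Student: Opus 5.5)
Part i) is a Gauss--Green computation on the region between two level sets of $u$ inside $N$. Fix $0\le t_1<t_2<u_{\rm max}$ and set $N_{t_1,t_2}:=\{t_1<u<t_2\}\cap N$. Since there are no critical points in $N$, the level sets $\{u=t\}\cap\overline N$ for $t<u_{\rm max}$ are smooth curves, with unit normal $\pm\nabla u/|\nabla u|$. Integrating $\Delta_p u=-2$ over $N_{t_1,t_2}$ gives
\[
\int_{\{u=t_2\}\cap\overline N}|\nabla u|^{p-1}d\sigma-\int_{\{u=t_1\}\cap\overline N}|\nabla u|^{p-1}d\sigma=-2|N_{t_1,t_2}|.
\]
Here the flux through $\partial\Om\cap\overline N$ is the $t_1=0$ term, and the boundary of $N_{t_1,t_2}$ has no other pieces because $N$ is a connected component of $\Om\setminus{\rm MAX}(u)$. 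Hence $\Phi(t):=\int_{\{u=t\}\cap\overline N}|\nabla u|^{p-1}d\sigma$ satisfies $\Phi'(t)=-2\int_{\{u=t\}\cap \overline N}|\nabla u|^{-1}d\sigma$ by the coarea formula. Then
\[
U'(t)=(u_{\rm max}-t)^{-2\frac{p-1}{p}-1}\Big[2\tfrac{p-1}{p}\Phi(t)-2(u_{\rm max}-t)\int_{\{u=t\}\cap\overline N}\frac{d\sigma}{|\nabla u|}\Big].
\]
So it suffices to show $\frac{p-1}{p}|\nabla u|^{p}\le u_{\rm max}-u$ pointwise in $N$, because then $|\nabla u|^{p-1}\le \frac{p}{p-1}(u_{\rm max}-t)|\nabla u|^{-1}$ on each level set and the bracket is $\le0$.

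The pointwise bound is exactly $W\le W_0$, in the notation of the proof of Theorem \ref{teo p serrin omega}. I would obtain it by repeating that maximum principle argument on $N$ only, with $F=\Psi^{-m_1(p)-\frac p{p-1}}(W-W_0)$ and $U:=N$. On $\partial\Om\cap\overline N$ we have $F\le0$ by the hypothesis. The delicate point is the behaviour of $F$ at ${\rm MAX}(u)\cap\overline N$, where the statement of i) does not include a reverse {\L}ojasiewicz assumption. I would try to avoid needing $F\to0$ there by replacing the exponent $m_1$ with some $m\in[m_1,m_2]$ keeping $c\le0$. If possible, I would choose one such that $W_0^{-(p-1)m/p}\to0$ while the term $W/W_0^{1+(p-1)m/p}$ stays nonpositive in the limsup sense. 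This works using only the boundedness of $|\nabla u|$ ($u\in C^{1,\alpha}$) and $W\to0$ at ${\rm MAX}(u)$, provided $1+(p-1)m/p\le0$ is attainable. I expect this to be the main obstacle. If it fails, I would instead use a sliding comparison, $F_\varepsilon:=F-\varepsilon\Psi^{-k}$ with $k$ large, letting $\varepsilon\to0$. Alternatively, the paper may intend the gradient bound to come from Theorem \ref{teo p serrin omega}'s mechanism, in which case I would simply invoke it.

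For ii), by i) and $\Phi\ge0$ it suffices to show $\liminf_{t\to u_{\rm max}}U(t)>0$; but we need $+\infty$, so a lower bound on $\Phi$ is required. For $t$ close to $u_{\rm max}$, the level set $\{u=t\}\cap\overline N$ lies in $\mathscr U$ and, by smooth dependence and the coarea formula, its length is bounded below by roughly $\mathcal H^1({\rm MAX}(u)\cap\overline N)>0$, since the level curves accumulate on that set. The $p$-{\L}ojasiewicz inequality gives $|\nabla u|^{p-1}\ge c^{p-1}(u_{\rm max}-t)^{\gamma(p-1)}$ there. Therefore $U(t)\gtrsim (u_{\rm max}-t)^{(p-1)(\gamma-2/p)}\to+\infty$, because $\gamma<2/p$. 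The only care needed is the length lower bound, which I would get by projecting the level curve onto ${\rm MAX}(u)$ along gradient flow lines, which is $1$-Lipschitz up to small error near the set.
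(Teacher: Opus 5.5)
\textbf{There is a gap in part (i): the pointwise gradient bound is not established under the proposition's hypotheses.}

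The surrounding structure is fine. Your reduction of (i) to $\frac{p-1}{p}|\nabla u|^p\le u_{\rm max}-u$ on $N$, followed by the flux/coarea computation, matches the paper, which writes your formula for $U'$ as $\mathrm{div}\big(|\nabla u|^{p-2}\nabla u\,(u_{\rm max}-u)^{-2(p-1)/p}\big)\le0$. Your (ii) is the paper's {\L}ojasiewicz-plus-length argument. None of your three options proves the pointwise bound itself.

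\emph{Option A is impossible.} In the $R=0$ setting of the proof of Theorem~\ref{teo p serrin omega} one has $\psi(-\dot\psi)^{p-1}=1$ and $\eta'/\eta=m$. The coefficients of the inequality for $F=\eta(\varphi)w$ from the proof of Theorem~\ref{stime gradiente esterne} then become
\[
c=\big[(p-1)m^2+(3p-2)m+2p\big]\,|\nabla\varphi|_g^p,\qquad d=\frac{(p-2)(3p-2)m^2+4p(2p-3)m+4p^2}{2p\,\eta}.
\]
So $c\le0$ if and only if $m\in[-\frac p{p-1},-2]$, and $d\ge0$ if and only if $m\in[m_1,m_2]$. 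Moreover $m_1>-\frac p{p-1}$ for every $p\in(1,2)$, which is equivalent to $(p-2)^2>0$. On the admissible range $[m_1,-2]$ you therefore always have $1+\frac{p-1}pm\ge1+\frac{p-1}pm_1>0$. The paper takes $m_1$ precisely because it minimizes this exponent.

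\emph{Option B fails.} One computes $P^{II}_\varphi(e^{k\varphi})=k\,e^{k\varphi}\big(\Delta^{(g)}_p\varphi+k(p-1)|\nabla\varphi|_g^p\big)$, whose leading term in $k$ is positive. So $\Psi^{-k}=e^{k\varphi}$ is a subsolution, not the supersolution that subtracting $\varepsilon\Psi^{-k}$ would need.

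\emph{Option C is the paper's route.} The paper applies the maximum principle on $N_\varepsilon$ ``as in the proof of Theorem~\ref{teo p serrin omega}''. As you suspected, letting $\varepsilon\to0$ there uses the $\frac{2-p}{p^2}$-reverse {\L}ojasiewicz inequality at ${\rm MAX}(u)$. That hypothesis is present in Theorem~\ref{teo2.1NEW}, where the proposition is applied, but not in the proposition itself.

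\textbf{The missing idea is the excluded endpoint $m=-\frac p{p-1}$.} There $F=W-W_0=P-\frac p{p-1}u_{\rm max}$, with $P:=|\nabla u|^p+\frac p{p-1}u$ the classical $P$-function. Its subsolution property must be checked directly. The step $|\nabla^2\varphi|_g^2\ge\frac12(\Delta_g\varphi)^2$ behind the inequality above is exactly what produces $d<0$ at this value.

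Work in the frame $\nu=\nabla u/|\nabla u|$, $\tau\perp\nu$, with $A=u_{\nu\nu}$, $B=u_{\nu\tau}$, and use the equation to eliminate $u_{\tau\tau}=-2|\nabla u|^{2-p}-(p-1)A$. The $p$-Bochner formula \eqref{p bochner fi} for $u$ in the flat metric, together with $P^{II}_u(u)=\Delta_pu=-2$, gives
\[
P^{II}_u(P)=p\,|\nabla u|^{2p-4}\Big[2B^2+2(p-1)^2(A-A_-)(A-A_+)\Big],\qquad A_-=-\frac{|\nabla u|^{2-p}}{p-1},\quad A_+=\frac{(3-2p)\,|\nabla u|^{2-p}}{(p-1)^2}.
\]
Here $A-A_-=\partial_\nu P/\big(p|\nabla u|^{p-1}\big)$, so the possibly negative quadratic term is a multiple of $\partial_\nu P$. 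Hence $P^{II}_u(P)-\langle b,\nabla P\rangle\ge0$ for some $b$ locally bounded on $\{\nabla u\neq0\}$, and the maximum principle applies to $P$ on $N_\varepsilon$.

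Since $\nabla u$ is continuous and vanishes on the compact set ${\rm MAX}(u)\subset\Omega$, you get $\max_{\{u=u_{\rm max}-\varepsilon\}}P\to\frac p{p-1}u_{\rm max}$. Combined with the boundary hypothesis, this yields $P\le\frac p{p-1}u_{\rm max}$ on $N$ with no {\L}ojasiewicz-type assumption. After that, your part (i) goes through verbatim.

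For (ii), note that (i) makes $U$ monotone, so its limit exists. A positive $\limsup$ of $\mathcal H^1(\{u=t\}\cap\overline N)$ therefore suffices, and that is all the paper asserts. Your flow-line projection still has to be made precise, but the paper gives no more detail at that point.
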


\proof 
The function $U$ is well-defined, because $|\nabla u|$ is 
bounded in $\overline\Omega$ and the level sets of $u$ have locally finite $\mathcal{H}^1$-measure
An application of the maximum principle yields the following  estimate 
$$|\nabla u|^p+\frac{p}{p-1}u-\frac{p}{p-1}u_{\rm max}\le 0,\quad \text{ on }N$$
(this follows by applying the maximum principle on $N_\e:=N\cap\{u\le u_{\rm max}-\e\}$ for $\e\in\R$ arbitrary small as in the proof of Theorem \ref{teo p serrin omega} and then letting $\e\to0^+$, finally recalling that $\Psi>0$ on $N$).
We have that
$$\mathrm{div}\left(\frac{|\nabla u|^{p-2}\nabla u}{(u_{\rm max}-u)^\frac{2p-2}{p}}\right)=\frac{2p-2}{p(u_{\rm max}-u)^\frac{3p-2}{p}}\Big(|\nabla u|^p+\frac{p}{p-1}u-\frac{p}{p-1}u_{\rm max}\Big)\le0$$
Integrating by parts on $\{t_1\le u\le t_2\}\cap\overline{N}$ together with the definition of $U$ gives
$$U(t_2)-U(t_1)\le0$$
which gives the desired monotonicity and therefore point $i)$.

From the {\L}ojasiwicz Inequality and the compactness of $\mathrm{MAX}(u)$, we obtain that for $t$ sufficiently close to $u_\mathrm{max}$
\begin{equation}\label{U>=}
U(t)\ge\frac{C}{(u_\mathrm{max}-t)^{(p-1)\left(\frac{2}{p}-\gamma\right)}}\,\Big|\{u=t\}\cap\overline{N}\Big|,  
\end{equation}
where $C>0$ and $\gamma<\frac{2}{p}$. 
Finally, since $\mathcal{H}^1(\mathrm{MAX}(u)\cap\overline{N})>0$ it holds 
\begin{equation}\label{limsup>0}
\limsup\limits_{t\to u^-_\mathrm{max}}\big|\{u=t\}\cap \overline{N}\big|>0,
\end{equation}
and the conclusion follows.
\endproof

We are now in the position to prove Theorem \ref{teo2.1NEW}, which is a localized version of Theorem \ref{teo p serrin omega}.

\proof[Proof of Theorem \ref{teo2.1NEW}]
We are under the assumptions of 
Proposition \ref{prop2.3new}, so from the monotonicity of $U$, we get that
$$\lim\limits_{t\to u^-_\mathrm{max}}U(t)\le U(0)=\frac{1}{ {u_\mathrm{max}}^\frac{2p-2}{p}}\int\limits _{\partial\Omega\cap\overline{N}} |\nabla u|^{p-1}d\sigma\le\frac{1}{ {u_\mathrm{max}}^{\frac{p-1}{p}}} \left(\frac{p}{p-1}\right)^\frac{p-1}{p}\big|\partial\Omega\cap\overline{N}\big|<+\infty.$$
It follows that $\mathcal{H}^1(\mathrm{MAX}(u)\cap\overline{N})=0$, otherwise $\lim_{t\to u^-_\mathrm{max}}U(t)=+\infty$ which would lead to a contradiction.
The fact that $\mathcal{H}^1(\mathrm{MAX}(u)\cap \overline{N})=0$ implies that $\Omega\setminus\mathrm{MAX}(u)$ is connected, and hence $\partial\Omega\cap \overline{N}=\partial\Omega$ this fact coupled with Theorem \ref{teo p serrin omega}, allows us to conclude.
\endproof

%-----------------------------------------------------------------

\section{Length bounds for $\Gamma_{\rm i}$ and $\Gamma_{\rm o}$}\label{sez7}

The gradient estimates obtained in the previous section play a key role in establishing bounds for the lengths of the inner and outer boundary components of the domain $\Omega$. Since the method used to derive these bounds fails when $|\nabla u|=0$, we separately provide
the length bounds for the curve of maximum pints $\Sigma$.

Notice that from Proposition \ref{prop cond pohozaev}, if $u$ solves \eqref{sist a b alpha beta} and $a<b$ then we have  $\alpha<0$ and $\beta\ge0$, hence $|\nabla u|$ cannot vanish on $\Gamma_{\rm i}$. Analogously, if $u$ solves \eqref{sist a b alpha beta} and $a>b$, then $\alpha\ge0$ and $\beta<0$. So, we will treat separately the case $\beta=0$ when $a<b$, and $\alpha=0$ when $a>b$ (see Proposition \ref{prop bound derivata norm nulla} below). These estimates can be fruitfully exploited to obtain our main rigidity results.

\begin{prop}\label{prop length bnds Gamma_i e out}
Let $(\Om,u)$ be a solution to problem \eqref{sist a b alpha beta} such that $a<u<b$ and $\beta\not=0$.
Then it holds 
\begin{equation}\label{stime lungh}
|\Gamma_{\rm i}|\le2\pi\,r_i\qquad\text{and}\qquad |\Gamma_{\rm o}| \ge2\pi\,r_o,
\end{equation}
where $r_i$ and $r_o$ are the two values given by Lemma \ref{lemma a<b}.

The same estimates also hold when $b<u<a$. In this case we assume that $\alpha\not=0$ and $a-b\le\left(\frac{p-1}{p}\right)(|\beta|^p-\alpha^p)$ (using Lemma \ref{lemma a>b} to determine $r_i$ and $r_o$).
\end{prop}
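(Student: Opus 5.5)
The plan is to turn the gradient estimate $W\le W_R$ of Theorem \ref{stime gradiente} (case $a<u<b$, $\beta\neq0$) and Theorem \ref{stime gradiente esterne} (case $b<u<a$) into curvature bounds on the boundary components, and then into length bounds via Gauss--Bonnet. The estimate is sharp on both boundary components. Indeed $w=|\nabla\f|_g^p-1\le0$ in $\Omega$ and $w=0$ on $\Gamma_{\rm i}\sqcup\Gamma_{\rm o}$, so the outward normal derivative of $w$ is nonnegative there. Equivalently, on each $\Gamma$ we get $\partial_\nu(W-W_R)\ge0$ with $\nu$ the exterior normal.

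First I will express $\partial_\nu W$ on a boundary component through the curvature of the level set. Since $u$ is constant on $\Gamma$ and $\nabla u\neq0$ there, the equation $\Delta_p u=-2$ splits as $(p-1)|\nabla u|^{p-2}\nabla^2u(\mathbf n,\mathbf n)+|\nabla u|^{p-1}\kappa=-2$. Here $\mathbf n=\nabla u/|\nabla u|$ and $\kappa$ is the curvature of $\{u=c\}$ with respect to $-\mathbf n$ (the sign convention is fixed so that circles have $\kappa=1/r$ when $\Gamma$ is oriented as the boundary of the region it encloses). The normal derivative of $W=|\nabla u|^p$ is $p|\nabla u|^{p-1}\nabla^2u(\mathbf n,\mathbf n)$, so
\[
\partial_{\mathbf n}W=\frac{p}{p-1}\Big(-2-|\nabla u|^{p-1}\kappa\Big).
\]
For the model, the same identity holds for $W_R$ viewed as a function of $u$ through $\Psi$, with $\kappa$ replaced by $1/\Psi$. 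On $\Gamma$ one has $|\nabla u|=|\nabla u_R|\circ\Psi$ (the constants $|\alpha|$, $|\beta|$), so the term $-2$ cancels. Equivalently, $W_R=W_R(u)$ gives $\partial_{\mathbf n}W_R=W_R'(u)|\nabla u|$, and the ODE for $u_R$ identifies $W_R'(u)|\nabla u|$ with $\frac{p}{p-1}(-2-|\nabla u|^{p-1}/\Psi)$.

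Taking into account the orientation of $\mathbf n$ relative to $\nu$ on each component, the inequality $\partial_\nu(W-W_R)\ge0$ becomes a pointwise curvature bound. For $a<u<b$, the gradient points into $\Omega$ on $\Gamma_{\rm i}$, which gives $\kappa\ge 1/r_i$ for the curvature of $\Gamma_{\rm i}$ as the boundary of $A_{\rm int}$. On $\Gamma_{\rm o}$ the gradient points outward, which gives $\kappa\le 1/r_o$. For $b<u<a$ the roles of the normals swap together with the sign of $\dot\psi_o$, and one obtains the same two inequalities. Checking these signs carefully is where I expect most of the work to lie.

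Finally, each $\Gamma$ is a smooth simple closed curve, so Gauss--Bonnet gives $\int_\Gamma\kappa\,d\sigma=2\pi$. On $\Gamma_{\rm i}$ this yields $2\pi\ge|\Gamma_{\rm i}|/r_i$, that is $|\Gamma_{\rm i}|\le2\pi r_i$. On $\Gamma_{\rm o}$ it yields $2\pi\le|\Gamma_{\rm o}|/r_o$, that is $|\Gamma_{\rm o}|\ge2\pi r_o$, which proves \eqref{stime lungh}.

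I expect the main obstacle to be justifying the boundary differentiation, since $W$ must be $C^1$ up to $\Gamma$. Near $\Gamma$ we have $\nabla u\neq0$, so the equation is uniformly elliptic and smooth there, and $u$ is smooth up to the smooth boundary by classical elliptic regularity. The other delicate point is consistent bookkeeping of signs for the two orientations in the decreasing case.
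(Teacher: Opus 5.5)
Your proposal is correct and is essentially the paper's proof. The paper compares the first-order Taylor expansions of $W$ and $W_R$ along the normal segment at $q\in\Gamma_{\rm i}$ (resp.\ $\Gamma_{\rm o}$), eliminates $\nabla^2u(\nabla u,\nabla u)$ using the curvature and $\Delta_pu=-2$, uses $W(q)=W_R(q)$ and $W\le W_R$ to get $k\le -1/r_i$ (resp.\ $k\le 1/r_o$), and integrates over the curve; this is exactly your $\partial_\nu(W-W_R)\ge0$ argument. One small slip: both $\partial_{\mathbf n}W$ and $\partial_{\mathbf n}W_R$ should carry an extra factor $|\nabla u|$, but it is positive and cancels in the comparison.
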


\proof
We start proving the length bounds in the case when $a<u<b$.
Let us consider $q\in\Gamma_{\rm i}$. We compute the Taylor expansion of both $W$ and $W_R$ along the half-line $\gamma(s)=q+\frac{\nabla u}{|\nabla u|}(q) s$
$$W\left(\gamma(s)\right)=W(q)+\langle\,\nabla W(q)\,,\,\frac{\nabla u}{|\nabla u|}(q)\,\rangle s+o(s).$$
Recall now that the curvature $k$ of $\Gamma_{\rm i}$ with respect to the exterior unit normal $-\frac{\nabla u}{|\nabla u|}$ is 
$$k(q)=\frac{\nabla^2 u\big|_q(\nabla u, \nabla u)}{|\nabla u|^3(q)}-\frac{\Delta u(q)}{|\nabla u|(q)}\,=\,\frac{2}{|\nabla u|^{p-1}(q)}+(p-1)\frac{\nabla^2 u\big|_q(\nabla u,\nabla u)}{|\nabla u|^3}$$
where in the last equality we exploited the fact that $-\Delta_p u=2$, so isolating the hessian we get 
$$\nabla^2 u\big|_q(\nabla u,\nabla u)=\left(k(q)-\frac{2}{|\nabla u|^{p-1}(q)}\right)\frac{|\nabla u|^3(q)}{p-1}.$$ 
Using this identity in the expansion for $W$, we obtain 
\begin{align}\notag
W\left(\gamma(s)\right)&=W(q)+\frac{p}{p-1}|\nabla u|^p\left(k(q)-\frac{2}{|\nabla u|^{p-1}}\right)\,s+o(s)\\ 
&=W(q)-\frac{p}{p-1}W(q)\left(\frac{2}{W(q)^\frac{p-1}{p}}-k(q)\right)\,s+o(s). \label{taylor W}
\end{align}
On the other hand, since $\nabla W_R=-\frac{p}{p-1}\left(\frac{\Psi^2+R^2}{\Psi^2}\right)\nabla u$ (notice that we are dealing with the case when $a<u<b$ so 
$\Psi(x)=\psi_i\left(u(x)\right)$ as defined above) we obtain
\begin{equation}\label{taylor Wc}
W_R(\gamma(s))=W_R(q)-\frac{p}{p-1}\left(\frac{\Psi^2+R^2}{\Psi^2}\right)|\nabla u|\,(q)\,s+o(s).  
\end{equation}
Since $\partial_\nu u=\alpha$ on $\Gamma_{\rm i}$, it holds $W(q)=|\nabla u|^p(q)=|\alpha|^p$, and since $\Psi(q)=r_i$ then $W_R(q)=\left(\frac{R^2-r_i^2}{r_i}\right)^\frac{p}{p-1}=|\alpha|^p$ (where in the last equality we used that $R^2-r_i^2=|\alpha|^{p-1}r_i$). We are now in the position to compare the two expressions \eqref{taylor W} and \eqref{taylor Wc}. By exploiting the gradient estimates given by Theorem \ref{stime gradiente}, we finally get
$$k(q)\le -\frac{1}{\Psi(q)}=-\frac{1}{r_i}$$
and integrating on $\Gamma_{\rm i}$
$$-2\pi\le-\frac{|\Gamma_{\rm i}|}{r_i}.$$

We can argue similarly for $\Gamma_{\rm o}$. Let us consider $q\in\Gamma_{\rm o}$, since in this case the exterior unit normal is $\frac{\nabla u}{|\nabla u|}$ we take $\gamma(s)=q-\frac{\nabla u}{|\nabla u|}(q) s$ and we have 
$$k=-\frac{\nabla^2 u(\nabla u, \nabla u)}{|\nabla u|^3}+\frac{\Delta u}{|\nabla u|}\,=\,-\frac{2}{|\nabla u|^{p-1}}-(p-1)\frac{\nabla^2 u(\nabla u,\nabla u)}{|\nabla u|^3}.$$
Using the gradient estimate and the fact that $W(q)=W_R(q)=\beta^p$ we obtain 
$$k(q)\le\frac{1}{r_o}$$
and hence 
$$2\pi\le\frac{|\Gamma_{\rm o}|}{r_o}.$$

We can recover the same argument as before also in the case $b<u<a$,  recall that in this case $\Psi(x)=\psi_o\left(u(x)\right)$ and 
$W_R=\left(\frac{\Psi^2-R^2}{\Psi^2}\right)^\frac{p}{p-1}$.
\endproof

Observe that when $|\nabla u|=0$ on a boundary component, namely, it is a curve of maximum points, some of the terms in the previous proof are not well-defined. So a different approach is needed to prove length bounds for the outer boundary component $\Sigma$ when considering a solution to system \eqref{sist interno}, and for the inner boundary component (still denoted by $\Sigma$) when considering a solution to system \eqref{sist esterno}. Indeed, we will take advantage of some asymptotic expansions for $|\N u|^{p-1}$ and $|\dot u_R|^{p-1}$ when approaching the set of maximum points $\Sigma$. Notice that in these cases, further (but reasonable) assumptions are needed.

%--------------------------------------------------------------------

\section{Length bounds for $\Sigma$}\label{sez8}

In this section, we obtain the corresponding length bounds for the curve of maximum points $\Sigma$. The proof of this result is based on the asymptotic expansions for $V=|\nabla u|^{p-1}$ and $V_R:=|\dot u_R|^{p-1}$ about $\Sigma$, which we can compare using the previous gradient estimates. To keep the presentation cleaner, we postpone to Subsection \ref{subsec stime}, the details about the expansions for $V$ and $V_R$ at $\Sigma$.\\

Let $(\Om,u)$ be a solution to problem \eqref{sist interno} (the case where $(\Om,u)$ solves system \eqref{sist esterno} is analogous). Recall that in this case $\partial\Om=\Sigma_{\rm i}\sqcup\Sigma$. Let $t$ be the distance function from $\Sigma$, that is, given $x\in\Om$, $t(x):=\mathrm{dist}(x,\Sigma)$, so we can define the corresponding local coordinates $(t,\theta)$. We introduce the function $v_R$, that is the model solution $u_R$ seen as a function of the distance $ t$
$$v_R( t):=u_R(R- t)=\int_{r_i}^{R- t}\left(\frac{R^2}{\rho}-\rho\right)^\frac{1}{p-1}d\rho.$$
and also the functions
$$V_R(x):=|\dot u_R|^{p-1}_{|\Psi(u(x))}\qquad\text{and}\qquad V=|\nabla u|^{p-1},$$
in what follows, with a slight abuse of notation, we denote by $V_R( t,\theta)$ the quantity $V_R(x( t,\theta))=|\dot u_R|^{p-1} _{|\Psi(u( t,\theta))}$. 

We obtain the following length bounds for the curve $\Sigma$.

\begin{prop}\label{prop bound derivata norm nulla}
Let $(\Om,u)$ be a solution to problem \eqref{sist interno}. Let us assume that the asymptotic expansion \eqref{ass_esp} holds for $u$ at $\Sigma$. 
Then it holds 
$$|\Sigma|\ge 2\pi R.$$
Analogously, if $(\Om,u)$ solves problem \eqref{sist esterno}, assuming further that $u_{\rm max}\le \frac{p-1}{p}|\beta|^p$, we have that
$$|\Sigma|\le 2\pi R.$$
Where the parameter $R$ is given by Lemma \ref{lemma a<b} and Lemma \ref{lemma a>b}, respectively.
\end{prop}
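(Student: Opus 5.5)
The idea is to derive a pointwise curvature bound for $\Sigma$ from the gradient estimate $W\le W_R$, and then integrate it. Since $|\nabla u|=0$ on $\Sigma$, the first-order Taylor argument of Proposition \ref{prop length bnds Gamma_i e out} degenerates. We therefore compare the asymptotic expansions in the distance $t$ of $V=|\nabla u|^{p-1}$ and $V_R=|\dot u_R|^{p-1}\circ\Psi$, which are the expansions deferred to Subsection \ref{subsec stime}. The gradient estimate $W\le W_R$ is available from Theorem \ref{stime gradiente beta=0} in the interior case and from Theorem \ref{stime gradiente alfa=0} in the exterior case. It is equivalent to $V\le V_R$, because $W=V^{p/(p-1)}$ and $W_R=V_R^{p/(p-1)}$. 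The reverse {\L}ojasiewicz hypothesis needed there follows from \eqref{ass_esp}, as observed in Remark \ref{II implica no bnd su p}.

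\textbf{Step 1 (expansion of $V$).} Set $q=\frac p{p-1}$. Using \eqref{ass_esp} in the coordinates $(t,\theta)$, I would compute $|\nabla u|^2=u_t^2+(1-\kappa t)^{-2}u_\theta^2$, where $\kappa$ is the curvature of $\Sigma$. The $o_1$ remainder makes the $\theta$-derivatives of lower order, so the $u_\theta$ term is of order $t^{2q}$ and negligible. This gives
\[
V=(q|a|)^{p-1}t+c_1(\theta)\,t^2+o(t^2).
\]
Next I would plug \eqref{ass_esp} into $\Delta_p u=-2$, written as $u_{tt}(p-1)|u_t|^{p-2}-\kappa|u_t|^{p-2}u_t+\dots=-2$. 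Matching the order-$t^0$ terms should give $(p-1)q(q-1)|a|(q|a|)^{p-2}=2$, which fixes $|a|$ independently of $\theta$; I expect this to be the content of Proposition \ref{lemma7.1}. The first coefficient of $V$ should then be $2$, mirroring $V_R\sim 2t$. Matching the order-$t^1$ terms should express $b$, and hence $c_1$, through $\kappa(\theta)$, giving something like $V=2t+c\,\kappa(\theta)t^2+o(t^2)$ with an explicit constant $c$.

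\textbf{Step 2 (expansion of $V_R$).} Recall $V_R=\frac{R^2-\Psi^2}{\Psi}$ and $\Psi=\psi_i(u)$. I would expand $R-\Psi$ in terms of $u_{\max}-u$ by inverting $v_R$. Then I would substitute the expansion of $u_{\max}-u$ from \eqref{ass_esp}, so that $V_R=2\tau+c'\tau^2/R+\dots$, where $\tau=R-\Psi$ is in turn expanded in $t$. Its $t^2$ coefficient involves $1/R$ and the ratio $b/|a|$, and the latter again involves $\kappa$.

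\textbf{Step 3 (comparison and integration).} From $V\le V_R$, the order-$t$ terms agree, so the $t^2$ coefficients must satisfy $c_1(\theta)\le c_1^R(\theta)$. After the $\kappa$-dependent pieces are collected, this should reduce to $\kappa(\theta)\le 1/R$, with curvature measured with respect to the normal exterior to $\Omega_{\rm i}$. Since $\Sigma$ is a smooth closed simple curve, $\int_\Sigma\kappa\,d\sigma=2\pi$, so $2\pi\le|\Sigma|/R$. The exterior case works the same way with $\psi_o$: the sign conventions flip, giving $\kappa\ge 1/R$ and hence $|\Sigma|\le 2\pi R$. The hypothesis $u_{\max}\le\frac{p-1}p|\beta|^p$ is used only to ensure, via Lemma \ref{lemma a>b} and Lemma \ref{lemma a1 new}, that the model and the gradient estimate exist.

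The main obstacle is Step 1 and Step 2 done carefully. One has to verify that the $t^2$ coefficients really isolate $\kappa$ with a sign that does not cancel, and that the $o_1$ remainders, including the mixed $\partial_t\partial_\theta$ control, are enough to push all errors to $o(t^2)$ uniformly in $\theta$. Particular care is needed with the non-integer powers $t^{q}$ and $t^{q+1}$ when raising $|u_t|$ to the power $p-1$.
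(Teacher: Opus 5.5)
Your proposal is correct and follows the paper's argument. The gradient estimate gives $W\le W_R$, equivalently $V\le V_R$. Comparing the $t^2$ coefficients in $V=2t+k\,t^2+o(t^2)$ and $V_R=2t+\bigl[\tfrac{1}{2p-1}k+\tfrac{2(p-1)}{2p-1}\tfrac1R\bigr]t^2+o(t^2)$ yields $k\le 1/R$, and you then integrate the curvature, with the signs flipped in the exterior case. Your Step~2 route via $V_R=2\tau+\tau^2/R+O(\tau^3)$, $\tau=R-\Psi$, does reproduce this coefficient.

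One caution for Step~1: the $o_1$ hypothesis gives no control on $\partial_t^2$ of the remainder, so matching orders in the non-divergence form of $\Delta_p u=-2$ is not directly justified. Work in divergence form instead, as the paper does with the field $X$. For example, integrate the flux of $|\nabla u|^{p-2}\nabla u$ over thin strips $\{0<t<s,\ \theta_1<\theta<\theta_2\}$; this uses only the first derivatives that $o_1$ controls.
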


\proof
Let $(\Om,u)$ a solution to problem \eqref{sist interno}. From the gradient estimate given by Theorem \ref{stime gradiente beta=0}, we have that $V\big(x(t,\theta)\big)\le V_R\big(x(t,\theta)\big)$). So, exploiting the Taylor expansions for $V$ and $V_R$ as $t\to0^+$  given by Proposition \ref{lemma7.1} and Proposition \ref{prop stima VR} (both below), we get
$$k(\theta)\le\frac{1}{R}$$
where $k(\theta)$ is the curvature of $\Sigma$ at the point $(0,\theta)$ computed with respect to the exterior unit normal to $\Om$. The desired length bound follows immediately. 

When considering problem \eqref{sist esterno}, 
by proceeding with the same argument (use Proposition \ref{prop stima VR esterna}), we have that the curvature of $\Sigma$ is less or equal to $-\frac{1}{R}$. Since in this case the curve $\Sigma$ is oriented clockwise, integrating over $\Sigma$, we get $-2\pi\le -\frac{|\Sigma|}{R}$.
\endproof

\subsection{Asymptotic estimates at $\Sigma$}\label{subsec stime}

We explicitly deal with the case where $u$ is a solution to problem \eqref{sist interno} (the case where $u$ solves system \eqref{sist esterno} is analogous, so we provide the corresponding results, but we omit the details). 
The vector field
\begin{equation}\label{vfX}
X:=|\N u|^{p-1}\left(\frac{\nabla u}{|\nabla u|}+\nabla  t\right),
\end{equation}
defined over $\Omega\setminus\Sigma$,
plays a crucial role in the subsequent analysis. We will denote by
$$X^t:=\langle X,\frac{\partial}{\partial t}\rangle\qquad\text{ and }\qquad X^\theta=\langle X,\frac{\partial}{\partial\theta}\rangle.$$

We obtain a Taylor expansion for $V$ at $\Sigma$, namely as $t\to0^+$. 

\begin{prop}\label{lemma7.1}
Let us assume that the asymptotic expansion \eqref{ass_esp} holds for $u$ at $\Sigma$. Then
\begin{equation}\label{eq:coeff_expl}
a(\theta)=-\left(\frac{p-1}{p}\right)2^\frac{1}{p-1},\qquad\quad b(\theta)
=-\left(\frac1{2p-1}\right)2^\frac{2-p}{p-1}\,\,k(\theta),
\end{equation}
and as $t\to 0^+$
\begin{equation}\label{ff5}
V_{|( t,\theta)}=2 t+ k(\theta)\, t^2+o( t^2).
\end{equation}
\end{prop}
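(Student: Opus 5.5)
We plan to plug the expansion \eqref{ass_esp} into the equation $\Delta_p u=-2$, written in the Fermi coordinates $(t,\theta)$ adapted to $\Sigma$, and match powers of $t$. Set $q=\frac{p}{p-1}$. In these coordinates the Euclidean metric is $dt^2+(1-k(\theta)t)^2d\theta^2$, with the sign convention that $\Delta t=-\frac{k}{1-kt}$ for the curvature $k$ of $\Sigma$ computed with respect to the exterior normal of $\Om$. We will write
\[
\Delta_p u=\pa_t\big(|\N u|^{p-2}u_t\big)+\frac{-k}{1-kt}|\N u|^{p-2}u_t+\frac{1}{1-kt}\pa_\theta\Big(\frac{|\N u|^{p-2}u_\theta}{1-kt}\Big).
\]
The $o_1$ convention in \eqref{ass_esp} is precisely what is needed for this computation.

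\textbf{Step 1: expand the derivatives.} From \eqref{ass_esp} we have $u_t=qa\,t^{q-1}+(q+1)b\,t^{q}+o(t^{q})$. We also have $u_\theta=O(t^q)$ and $\pa_t u_\theta=O(t^{q-1})$, so $|u_\theta|^2/(1-kt)^2=O(t^{2q})$ is negligible compared with $u_t^2\sim t^{2q-2}$. Since $u_t<0$ near $\Sigma$ (we will check $a<0$), this gives
\[
|\N u|=|u_t|\,(1+O(t^2)),\qquad V=|\N u|^{p-1}=(q|a|)^{p-1}t\Big(1+\frac{(q+1)b}{qa}(p-1)\,t+o(t)\Big),
\]
using $(q-1)(p-1)=1$. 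Write $V=A t+Bt^2+o(t^2)$, where $A=(q|a|)^{p-1}$ and $B=(p-1)\frac{(q+1)b}{qa}A$. Then $|\N u|^{p-2}u_t=-V(1+O(t^2))$.

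\textbf{Step 2: match orders.} The angular term is
\[
\pa_\theta\big(|\N u|^{p-2}u_\theta\big)\sim \pa_\theta\big(t^{(q-1)(p-2)}\,t^{q}\big)=O(t^{q+(q-1)(p-2)})=O(t^{2}),
\]
since $(q-1)(p-2)=\frac{p-2}{p-1}$ and $q+\frac{p-2}{p-1}=2$. This is of smaller order than the constant terms we match. Strictly, we would control $\pa_\theta$ of $|\N u|^{p-2}$ through $\pa_\theta a$, $\pa_\theta b$ and the $o_1$ remainder, and this is where the mixed-derivative hypothesis $\pa^2_{t\theta}f=o(t^{\ell-1})$ is used.

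The equation then reads $-\pa_t V+kV+O(t)=-2$. At order $t^0$ this gives $A=2$. It follows that $a$ is constant and negative, namely $q|a|=2^{\frac1{p-1}}$, that is $a=-\frac{p-1}{p}2^{\frac1{p-1}}$. At order $t^1$ we need the next term of $\pa_tV$, namely $2Bt$, against $kAt=2kt$, together with the angular contribution.

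\textbf{Step 3: the main obstacle.} The delicate point is showing that the angular term contributes nothing at order $t$. Since $a$ is now known to be constant, $u_\theta=b'(\theta)t^{q+1}+o(t^{q+1})$, so
\[
|\N u|^{p-2}u_\theta=O\big(t^{\frac{p-2}{p-1}+q+1}\big)=O(t^3),
\]
and its $\theta$-derivative is $o(t)$. With this in hand, the order-$t$ balance becomes $-2B+2k=0$, giving $B=k$. This proves \eqref{ff5}, $V=2t+kt^2+o(t^2)$.

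\textbf{Step 4: recover $b$.} Solving $B=(p-1)\frac{(q+1)b}{qa}\,2=k$ with the value of $a$ found above, and using $q+1=\frac{2p-1}{p-1}$, we obtain
\[
b=\frac{k\,qa}{2(p-1)(q+1)}=-\frac{1}{2p-1}\,2^{\frac{2-p}{p-1}}\,k(\theta),
\]
as stated in \eqref{eq:coeff_expl}. The only real care needed throughout is tracking how the $o_1$ remainders propagate through $|\N u|^{p-2}$, which is a nonsmooth power since $p<2$. Because $u_t$ is bounded away from $0$ relative to $t^{q-1}$, these factorizations are legitimate.
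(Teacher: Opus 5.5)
This is essentially the paper's argument. The paper rewrites the equation as $-2=-\partial_t V+kV+{\rm div}X$ with $X=V\bigl(\nabla u/|\nabla u|+\nabla t\bigr)$, which is exactly your split into the radial part, the correction coming from $|u_t|/|\nabla u|-1$, and the tangential divergence. It then reads off $\partial_tV(0,\theta)=2$ and the $t^2$-coefficient $k$ (by differentiating the identity in $t$, where you match the order-$t$ terms) and solves for $a,b$ as you do. One caveat, shared with the paper: the tangential term contains $u_{\theta\theta}$, i.e.\ $\partial_\theta^2$ of the remainder, which the $o_1$ convention does not control (the mixed-derivative hypothesis only handles $u_{t\theta}$), so to make Steps 2--3 rigorous you should pair the equation with a test function $\phi(\theta)$, integrate by parts in $\theta$ so that only $|\nabla u|^{p-2}u_\theta$ enters, and integrate in $t$ rather than differentiate the expansion of $V$; your Step 2 bound $O(t^2)$ already makes Step 3 superfluous.
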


\proof
Expansion \eqref{ass_esp} easily gives in particular that
\begin{align*}
\frac{\pa u}{\pa t}&=\frac{p}{p-1}\, a(\theta)\,\,t^{\frac1{p-1}}
+o\left(t^{\frac1{p-1}}\right),\\
\frac{\pa u}{\pa\theta}
&=a'(\theta)t^{\frac p{p-1}}+b'(\theta)\,t^{\frac{p}{p-1}+1}+o \left(t^{\frac p{p-1}+1}\right). 
\end{align*}
From this, we get
\[|\nabla u|^2=\left(\frac{\pa u}{\pa t}\right)^2+\frac1{\varphi^2} \left(\frac{\pa u}{\pa\theta}\right)^2
=a(\theta)^2\left(\frac p{p-1}\right)^2 t^{\frac2{p-1}}\,\left[1+\frac{2(2p-1)}{p}\frac{b}{a}t+o(t)\right],
\]
recall that we denote by $\Sigma_t(\cdot):=x( t,\cdot)$ and $\varphi( t,\theta):=|\dot\Sigma_ t(\theta)|$, so the metric can be written as $g={\rm d} t^2+\varphi^2( t,\theta){\rm d}\theta^2$. 
This in turn implies that
$$V=\left(|\nabla u|^2\right)^\frac{p-1}{2}=|a(\theta)|^{p-1}\left(\frac p{p-1}\right)^{p-1}\,t\,\left[1+\frac{(p-1)(2p-1)}{p}\frac{b}{a}\,t\,+o(t)\right]$$
and hence
\begin{equation}\label{basic_X^t_0}
\frac{\partial V}{\partial t}\bigg|_{(0,\theta)}=|a(\theta)|^{p-1}\left(\frac p{p-1}\right)^{p-1},\qquad X^t = |\nabla u|^{p-2}\frac{\pa u}{\pa t}+V= o(t).
\end{equation}
We have that
$${\rm div}_g X=\frac{\partial X^t}{\partial t }+\frac{1}{\varphi}X^t \frac{\partial \varphi}{\partial t }+\frac{\partial X^\theta}{\partial \theta }+\frac{1}{\varphi} X^\theta \frac{\partial \varphi}{\partial\theta}.$$
Since $|\nabla u|^{p-1}\to 0$ as $ t\to0^+$, and $|\frac{\partial}{\partial t}\big|_{ t=0}\varphi ( t,\theta)|=|k(\theta)|$, we get that 
$$\lim_{ t\to0^+} {\rm div} X_{|( t,\theta)}= \lim_{ t\to0^+}\frac{\partial X^ t}{\partial  t} \bigg|_{( t,\theta)},$$
and from the second identity in \eqref{basic_X^t_0} we obtain
\begin{equation}
\label{Mess_0}
\lim_{t\to0^+}{\rm div X}_{|(t,\theta)}=0.
\end{equation}
Now, we observe that 
$$\begin{aligned}
-2=\Delta_p u&=\mathrm{div}\left(|\nabla u|^{p-1}\frac{\nabla u}{|\nabla u|}\right)=\langle\nabla V,\frac{\nabla u}{|\nabla u|}\rangle + V\mathrm{div}\left(\frac{\nabla u}{|\nabla u|}\right)\\ 
&=-\frac{\partial V}{\partial t}- V\Delta t+\mathrm{div} \left[V\left(\frac{\nabla u}{|\nabla u|}+\nabla t\right)\right]\\
&=-\frac{\partial V}{\partial t}- V\Delta t+\mathrm{div} X,
\end{aligned}$$
so that
\begin{equation}\label{ff2}
-2=-\frac{\partial V}{\partial t}\bigg|_{(t,\theta)}+ V_{|(t,\theta)} k(t,\theta)+\mathrm{div} X_{|( t,\theta)}
\end{equation}
From \eqref{Mess_0} we therefore get
\begin{equation}\label{Mess_1}  
\frac{\partial V}{\partial t}\bigg|_{(0,\theta)}=2.
\end{equation}
This identity coupled with the first identity in \eqref{basic_X^t_0} and with the fact that $a(\theta)<0$ gives the explicit expression for the coefficient $a(\theta)$ stated in \eqref{eq:coeff_expl}.
We now have
\begin{align*}
\frac{\pa u}{\pa t}
&=-2^{\frac 1{p-1}}
t^{\frac1{p-1}}+b(\theta)
\left(\frac{2p-1}{p-1}\right)t^{\frac p{p-1}}
+o\left(t^{\frac p{p-1}}\right),\\
\frac{\pa u}{\pa\theta}
&=b'(\theta)t^{\frac{2p-1}{p-1}}+o\left(t^{\frac{2p-1}{p-1}}\right).
\end{align*}
Hence,
\[|\nabla u|^2=(2t)^{\frac2{p-1}}\left[
1-2^{\frac{p-2}{p-1}}b(\theta)\left(\frac{2p-1}{p-1}\right)t+o(t)\right],\]
and in turn
\[V=2t\left[1-2^{-\frac1{p-1}}b(\theta)\left(2p-1\right)t+o(t)\right].\]
Therefore, we obtain, after a straightforward computation, that
\begin{equation}\label{eq:basic_X^t}
X^t=|\nabla u|^{p-2}\frac{\pa u}{\pa t}+V=o(t^2).
\end{equation}
from \eqref{eq:basic_X^t} we get
\begin{equation}
\label{Mess_2}
\lim_{t\to0^+}\frac{\partial}{\partial t}{\rm div X}_{|(t,\theta)}=0.
\end{equation}
By differentiating \eqref{ff2} with respect to $ t$, one gets
$$0=-\frac{\partial^2 V}{\partial^2  t}+ k(t,\theta)\frac{\partial V}{\partial t}+ V_{|(t,\theta)}\frac{\partial k(t,\theta)}{\partial t}
+\frac{\partial}{\partial  t}\mathrm{div} X,$$
and using \eqref{Mess_1}, \eqref{Mess_2}, and that $V(0,\theta)=0$, we get
\begin{equation}\label{ff4}
\frac{\partial^2 V}{\partial t^2}\bigg|_{{(0,\theta)}}=2k(\theta).   
\end{equation}
Finally, by using relations \eqref{Mess_1} and \eqref{ff4}, we get the desired expansion for $V$.
We finally combine the expansion for $V$, the identity $V=\left(\left(\frac{\partial u}{\partial t}\right)^2+\frac{1}{\varphi^2}\left(\frac{\partial u}{\partial \theta}\right)^2\right)^\frac{p-1}{2}$, and the known value for $a(\theta)$, to obtain that
\begin{equation}\label{ff16}
\frac{\partial u}{\partial t}=-2^\frac{1}{p-1}t^\frac{1}{p-1}-\frac{1}{p-1} 2^{\frac{1}{p-1}-1}k(\theta)t^{\frac{1}{p-1}+1}+o(t).   
\end{equation}
Using l'Hopital's rule and \eqref{ff16}, we have
$$\lim_{t\to0^+}\frac{u_{\rm max}-u-\frac{p-1}{p}2^\frac{1}{p-1} t^\frac{p}{p-1}}{t^{\frac{p}{p-1}+1}}=\frac{1}{2p-1}2^\frac{2-p}{p-1}k(\theta),$$
which gives the explicit expression for $b(\theta)$.
\endproof

The following lemma provides the asymptotic expansion of the function $\psi(u)$ around $u=u_{\rm max}$. Recall that $\psi$ is the pseudo-radial function defined in Section \ref{sez stime grad}.

\begin{lemma}\label{espansione phi}
If the asymptotic expansion \eqref{ass_esp} holds for $u$ at $\Sigma$, then the function $\psi$ is such that
\begin{align*}
\psi(u)-R=&-2^{-\frac{1}{p}}\left(\frac{p}{p-1}\right)^\frac{p-1}{p}(u_{\rm max}-u)^\frac{p-1}{p}\\
&+2^{-1-\frac{2}{p}}\frac{1}{2p-1} \left(\frac{p}{p-1}\right)^{2\big(\frac{p-1}{p}\big)}\frac{1}{R}(u_{\rm max}-u)^{2\big(\frac{p-1}{p}\big)}+o\bigg((u_{\rm max}-u)^{2\big(\frac{p-1}{p}\big)}\bigg)
\end{align*}
\end{lemma}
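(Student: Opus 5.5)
The statement concerns only the model profile: $\psi=\psi_i$ is the inverse of $u_R$ on $[r_i,R]$, and $u_R(R)=u_{\rm max}=M$. So I will expand $u_R$ near $r=R$ and then invert that expansion. The asymptotic hypothesis \eqref{ass_esp} on $u$ enters only through the identification $M=u_{\rm max}$ and the fact that $u(x)\to u_{\rm max}$ as $x\to\Sigma$; there the argument of $\psi$ tends to $u_{\rm max}$, which is the regime being expanded. Throughout I write $q=\frac p{p-1}$, $s:=R-\psi(u)\ge0$ and $\delta:=u_{\rm max}-u$.

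\textbf{Step 1 (expansion of the model near its maximum).} Since $u_R(\psi(u))=u$, we have
\[
\delta=\int_{R-s}^{R}\Big(\frac{R^2}{\rho}-\rho\Big)^{\frac1{p-1}}d\rho .
\]
Substitute $\tau=R-\rho$. Then $\frac{R^2}{\rho}-\rho=\frac{\tau(2R-\tau)}{R-\tau}=2\tau\big(1+\frac{\tau}{2R}+O(\tau^2)\big)$, and raising to the power $\frac1{p-1}$ gives the integrand
\[
(2\tau)^{\frac1{p-1}}\Big(1+\frac{\tau}{2(p-1)R}+O(\tau^2)\Big).
\]
Integrating term by term over $[0,s]$ yields
\[
\delta=A\,s^{q}\big(1+B\,s+O(s^2)\big),\qquad A=2^{\frac1{p-1}}\frac{p-1}{p},\qquad B=\frac{p}{2(p-1)(2p-1)R}.
\]
The remainder is uniform because the integrand is a smooth function of $\tau$ multiplied by $\tau^{1/(p-1)}$.

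\textbf{Step 2 (inversion).} Since $s\mapsto\delta$ is increasing with $\delta\sim As^q$, we have $s\to0$ as $\delta\to0$. Taking the $\frac1q$-th power of the expansion gives
\[
s=(\delta/A)^{\frac1q}\Big(1-\frac Bq\,s+o(s)\Big).
\]
Substituting the leading order $s\sim(\delta/A)^{1/q}$ into the correction term gives
\[
s=(\delta/A)^{\frac{p-1}{p}}-\frac Bq\,(\delta/A)^{2\frac{p-1}{p}}+o\big(\delta^{2\frac{p-1}{p}}\big).
\]

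\textbf{Step 3 (constants).} Compute $A^{-\frac{p-1}p}=2^{-\frac1p}\big(\frac p{p-1}\big)^{\frac{p-1}p}$. Similarly,
\[
\frac Bq\,A^{-2\frac{p-1}p}=2^{-1-\frac2p}\,\frac1{2p-1}\Big(\frac p{p-1}\Big)^{2\frac{p-1}p}\frac1R .
\]
Since $\psi(u)-R=-s$, these two coefficients give exactly the stated expansion.

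The only delicate point is the bookkeeping of the first-order correction in Step 1, i.e. getting the coefficient $\frac1{2(p-1)R}$ and the integration factor $\frac{p-1}{2p-1}$ right. Justifying the inversion is routine, since $\delta(s)$ is an explicit monotone function with a power-type expansion.
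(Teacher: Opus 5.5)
Your proposal is correct and follows essentially the paper's route. The paper also proves the lemma by a direct computation on the explicit model profile: it evaluates two successive limits of $\psi(u)-R$ against powers of $u_{\rm max}-u$, and your expansion of $u_R$ near $r=R$ followed by inversion of the series is a more explicit version of that calculation. Your constants $A$ and $B$ and the two final coefficients all check out, and you are right that \eqref{ass_esp} enters only through $\psi(u_{\rm max})=R$.
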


\proof
By straightforward calculations, we have
\begin{equation}\label{ff8}
\lim_{u\to u_{\rm max}} \frac{\psi(u)-\psi(u_{\rm max})}{[u_{\rm max}-u]^\frac{p-1}{p}} = -\left(\frac{p}{p-1}\right)^\frac{p-1}{p}2^{-\frac{1}{p}}.    
\end{equation}
For the second term, we compute 
$$\lim_{u\to u_{\rm max}} \frac{\psi(u)-R+\left(\frac{p}{p-1}\right)^\frac{p-1}{p}2^{-\frac{1}{p}}(u_{\rm max}-u)^\frac{p-1}{p}}{(u_{\rm max}-u)^\alpha}=2^{-1-\frac{2}{p}}\frac{1}{2p-1} \left(\frac{p}{p-1}\right)^{2\frac{p-1}{p}}\frac{1}{R}$$
notice that now $u=v_R(t)$, we must have $\alpha=2\frac{p-1}{p}$. The desired expansion follows at once.
\endproof

We are now in a position to introduce the Taylor expansion of $V_R( t,\theta)$ near $ t=0$, which is the following.

\begin{prop}\label{prop stima VR}
Assume that \eqref{ass_esp} holds. Then, as $t\to0^+$ we have
\begin{equation*}
{V_R}_{|(t,\theta)}=2t+\left[\frac{1}{2p-1}k(\theta)+\frac{2(p-1)}{2p-1}\frac{1}{R}\right] t^2+o(t^2)
\end{equation*}
\end{prop}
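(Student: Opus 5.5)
The plan is to write $V_R$ as an explicit function of the pseudo-radial function. Then expand it in two stages, first in $u_{\rm max}-u$ and then in $t$, using Lemma \ref{espansione phi} and the coefficients of Proposition \ref{lemma7.1}.

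\textbf{Step 1: reduction to a function of $\Psi$.} By construction $\dot u_R(r)=\big(\tfrac{R^2}{r}-r\big)^{\frac1{p-1}}$, so
\[
V_R=\frac{R^2-\Psi^2}{\Psi},\qquad \Psi=\psi(u(t,\theta)).
\]
Setting $s:=R-\Psi\ge0$, which tends to $0$ as $t\to0^+$, we get
\[
V_R=\frac{s(2R-s)}{R-s}=2s+\frac{s^2}{R}+O(s^3).
\]
So it suffices to show that
\[
s=t+\Big[\frac{k(\theta)}{2(2p-1)}-\frac{1}{2(2p-1)R}\Big]t^2+o(t^2).
\]
Substituting this gives $2t+\big[\tfrac{k}{2p-1}-\tfrac{1}{(2p-1)R}+\tfrac1R\big]t^2+o(t^2)$, which is exactly the claimed coefficient $\tfrac{k}{2p-1}+\tfrac{2(p-1)}{(2p-1)R}$.

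\textbf{Step 2: expansion of $u_{\rm max}-u$ in $t$.} By \eqref{ass_esp} together with the explicit coefficients \eqref{eq:coeff_expl},
\[
u_{\rm max}-u=A\,t^{q}+B\,t^{q+1}+o(t^{q+1}),\qquad A=\tfrac{p-1}{p}2^{\frac1{p-1}},\quad B=\tfrac{1}{2p-1}2^{\frac{2-p}{p-1}}k(\theta),
\]
with $q=\tfrac p{p-1}$. Raising to the power $\tfrac{p-1}{p}$ gives
\[
(u_{\rm max}-u)^{\frac{p-1}p}=A^{\frac{p-1}p}\,t\Big(1+\tfrac{p-1}{p}\tfrac BA\,t+o(t)\Big).
\]
Here $\tfrac{p-1}{p}\tfrac BA=\tfrac{k}{2(2p-1)}$. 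Squaring the same expansion, $(u_{\rm max}-u)^{2\frac{p-1}p}=A^{2\frac{p-1}p}t^2+o(t^2)$.

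\textbf{Step 3: substitution into Lemma \ref{espansione phi}.} Lemma \ref{espansione phi} gives $s=c_1(u_{\rm max}-u)^{\frac{p-1}p}-c_2(u_{\rm max}-u)^{2\frac{p-1}p}+o(\cdot)$, with
\[
c_1=2^{-\frac1p}\big(\tfrac p{p-1}\big)^{\frac{p-1}p},\qquad c_2=\tfrac{2^{-1-\frac2p}}{(2p-1)R}\big(\tfrac p{p-1}\big)^{2\frac{p-1}p}.
\]
One checks that $c_1A^{\frac{p-1}p}=1$ and $c_2A^{2\frac{p-1}p}=\tfrac{1}{2(2p-1)R}$, which yields the expansion of $s$ required in Step 1.

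\textbf{Main obstacle.} The main point to watch is the uniformity of the $o(t^2)$ remainders in $\theta$ when composing expansions. Lemma \ref{espansione phi} is an expansion in the single variable $u$, while the $o_1$ remainder in \eqref{ass_esp} is uniform in $\theta$ because $\Sigma$ is compact. I would therefore argue that the composition of an $o\big((u_{\rm max}-u)^{2\frac{p-1}p}\big)$ term with $u_{\rm max}-u\sim At^q$ is $o(t^2)$ uniformly in $\theta$. Everything else is bookkeeping of the constants above.
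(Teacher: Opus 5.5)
Your proposal is correct. It passes through the same intermediate fact as the paper, namely $R-\Psi=t+\frac{k(\theta)-R^{-1}}{2(2p-1)}\,t^2+o(t^2)$, but it obtains this fact differently.

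The paper works with derivatives. It computes $\partial_tV_R$ and $\partial_t^2V_R$ at $t=0$ by the chain rule, writing $\partial_t^2(\psi\circ u)=\ddot\psi\,(\partial_tu)^2+\dot\psi\,\partial_t^2u$. The two terms carry singular parts $\pm\frac{1}{(p-1)t}$ that must cancel. Moreover, as written, the second term uses an expansion of $\partial_t^2u$, which goes beyond what the $o_1$ convention in \eqref{ass_esp} controls.

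You instead compose expansions of functions. You use the exact identity $V_R=\frac{s(2R-s)}{R-s}$, the one-variable expansion of Lemma \ref{espansione phi}, and only the zeroth-order content of \eqref{ass_esp} together with the coefficients of Proposition \ref{lemma7.1}. This avoids derivatives of $u$ and the singular cancellation entirely. It produces exactly the pointwise expansion needed in Proposition \ref{prop bound derivata norm nulla}. The paper's route additionally gives the existence of $\partial_t^2V_R$ at $t=0$, but this is not used downstream.

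As for your ``main obstacle'': no uniformity in $\theta$ is needed. The statement is pointwise in $\theta$, and Lemma \ref{espansione phi} concerns the fixed explicit function $\psi$. So for each $\theta$ the remainder $o\big((u_{\rm max}-u)^{2(p-1)/p}\big)$ is $o(t^2)$ simply because $u_{\rm max}-u\sim A\,t^{p/(p-1)}$. Also, compactness of $\Sigma$ by itself would not turn a pointwise remainder into a uniform one, so that justification should be dropped. If uniformity were ever needed, it would have to be part of the hypothesis.
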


\proof
It holds 
$$\frac{\partial V_R}{\partial t}\bigg|_{t=0}=\frac{\partial|\dot u_R|^{p-1}}{\partial\rho}\bigg|_{\rho=R} \frac{\partial (\psi\circ u)}{\partial t}\bigg|_{t=0}=-2\,\,\frac{\partial (\psi\circ u)}{\partial t}\bigg|_{t=0}.$$
Now,
\begin{equation}\label{ff7}
\frac{\partial(\psi\circ u)}{\partial t}\bigg|_{t=0}= \lim_{t\to0}\frac{\psi(u_{\rm max})-\psi(u( t,\theta))}{[u_{\rm max}-u( t,\theta)]^\frac{p-1}{p}} \,\frac{[u_{\rm max}-u( t,\theta)]^\frac{p-1}{p}}{ t}=-1   
\end{equation}
due to \eqref{ff8} and \eqref{ass_esp}; so
$$\frac{\partial V_R}{\partial t}\bigg|_{t=0}=2.$$
Regarding the second derivative, we have
\begin{align*}
&\frac{\partial^2 V_R}{\partial t^2}\bigg|_{t=0}=\lim_{t\to0}\frac{\frac{\partial V_R}{\partial t}(t,\theta)-\frac{\partial V_R}{\partial t}(0,\theta)}{t}\\
&=\lim_{t\to0}\frac{\frac{\partial }{\partial\rho} \big|_{\rho=\psi(u)}\big(|\dot u_R|^{p-1}\big)\frac{\partial}{\partial t}(\psi\circ u)-\frac{\partial }{\partial\rho} \big|_{\rho=R}\big(|\dot u_R|^{p-1}\big)\frac{\partial}{\partial t}\big|_{t=0}(\psi\circ u)}{t}\\
&=\lim_{t\to0}\frac{\partial (\psi\circ u)}{\partial t}
\frac{\frac{\partial }{\partial\rho} \big|_{\rho=\psi(u)}\big(|\dot u_R|^{p-1}\big)-\frac{\partial }{\partial\rho} \big|_{\rho=R}\big(|\dot u_R|^{p-1}\big)}{t}+\lim_{t\to0}\frac{\frac{\partial}{\partial t}(\psi\circ u)-\frac{\partial}{\partial t}\big|_{t=0}(\psi\circ u)}{t} \,\,\frac{\partial }{\partial\rho} \bigg|_{\rho=R}|\dot u_R|^{p-1}\\
&=\frac{\partial(\psi\circ u)}{\partial t}\bigg|_{t=0} \,\,\frac{\partial^2 |\dot u_R|^{p-1}}{\partial\rho^2} \bigg|_{\rho=R}\,\,\lim_{u\to u_{\rm max}}\frac{\psi(u)-R}{(u_{\rm max}-u)^\frac{p-1}{p}}\,\lim_{t\to0}\left(\frac{u_{\rm max}-u}{t^\frac{p}{p-1}}\right)^\frac{p-1}{p}\,\,-\,2\frac{\partial^2(\psi\circ u)}{\partial t^2}\bigg|_{t=0}\\
&=(-1)\,\cdot\frac{2}{R}\,\cdot(-1)\,-\,2\,\frac{\partial^2(\psi\circ u)}{\partial t^2}\bigg|_{t=0}
\end{align*}
where in the last equality, we used \eqref{ff7}, \eqref{ff8},  and \eqref{ass_esp}. Moreover, we have
$$\frac{\partial^2(\psi\circ u)}{\partial t^2}= \frac{\partial^2 \psi}{\partial u^2}\left(\frac{\partial u}{\partial t}\right)^2+ \frac{\partial\psi}{\partial u}\,\frac{\partial^2 u}{\partial t^2}.$$
From Lemma \ref{espansione phi} we get the expansion of $\frac{\partial^2 \psi}{\partial u ^2}$ in terms of $(u_{\rm max}-u)$, then we exploit \eqref{ass_esp} to obtain the expansion in terms of powers of $t$ and also to compute $\frac{\partial u}{\partial t}$, we finally get
$$\frac{\partial^2 \psi}{\partial u^2}\left(\frac{\partial u}{\partial t}\right)^2=\frac{1}{p-1}t^{-1}+\frac{1}{(p-1)(2p-1)}\left[\frac{3}{2}k(\theta)+\frac{p-2}{R}\right]+O(t).$$
Arguing in the same way, we obtain
$$\frac{\partial\psi}{\partial u}\frac{\partial^2 u}{\partial t^2}=-\frac{1}{p-1}t^{-1}-2^{-1}\frac{2p^2-p-1}{(p-1)^2(2p-1)}k(\theta)+\frac{1}{(p-1)(2p-1)}\frac{1}{R} +O(t).$$
Therefore 
$$\frac{\partial^2(\psi\circ u)}{\partial t^2}= -\frac{1}{2p-1}k(\theta)+\frac{1}{2p-1}\frac{1}{R}+O(t),$$
and 
$$\frac{\partial^2 V_R}{\partial t^2}\bigg|_{t=0}=\frac{2}{R}-2\left[-\frac{1}{2p-1}k(\theta)+\frac{1}{2p-1}\frac{1}{R}\right]=\frac{2}{2p-1}k(\theta)+\frac{4}{R}\frac{p-1}{2p-1}$$
\endproof

When considering $u$ to be a solution to problem \eqref{sist esterno}, the model solution $u_R$ seen as a function of the distance $t$ is the following
$$v_R=\int_R^{R+t}\left(\rho-\frac{R^2}{\rho}\right)^\frac{1}{p-1}d\rho$$
hence in this case the expansion for $V_R$ is a little different.

\begin{prop}\label{prop stima VR esterna}
Let $(\Om,u)$ a solution to problem \eqref{sist esterno}.
Assume that $u_{\rm max}\le \frac{p-1}{p}|\beta|^p$ and moreover that \eqref{ass_esp} holds. Then, as $t\to0^+$ we have
\begin{equation*}
{V_R}_{|(t,\theta)}=2t+\left[\frac{1}{2p-1}k(\theta)-\frac{2(p-1)}{2p-1}\frac{1}{R}\right] t^2+o(t^2)
\end{equation*}
\end{prop}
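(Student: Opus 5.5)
We will mirror the proof of Proposition \ref{prop stima VR}, with $\psi_o$ in place of $\psi_i$ and with the outer model profile $v_R(t)=u_{\rm max}-\int_R^{R+t}\big(\rho-\frac{R^2}{\rho}\big)^{\frac1{p-1}}d\rho$. The assumption $u_{\rm max}\le\frac{p-1}{p}|\beta|^p$ is used only to guarantee, through Corollary \ref{lemma a>b alfa=0}, that the comparison parameter $R$ exists. In this case $\Psi=\psi_o(u)$ with $\psi_o(u_{\rm max})=R$, and $\Psi$ increases away from $\Sigma$. Since $V_R=|\dot u_R|^{p-1}\circ\Psi$ and $|\dot u_R|^{p-1}(\rho)=\rho-\frac{R^2}{\rho}$ on $[R,r_o]$, we have
\begin{equation*}
V_R=\Psi-\frac{R^2}{\Psi}.
\end{equation*}
The derivatives needed at $\rho=R$ are therefore $\partial_\rho|\dot u_R|^{p-1}=2$ and $\partial^2_\rho|\dot u_R|^{p-1}=-\frac{2}{R}$. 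Compared with the inner case, the first derivative is the same up to orientation and the second one changes sign. This sign change is what turns $+\frac{2(p-1)}{2p-1}\frac1R$ into $-\frac{2(p-1)}{2p-1}\frac1R$.

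\textbf{Step 1: expansion of $\psi_o$.} We prove the analogue of Lemma \ref{espansione phi}. Inverting $u_{\rm max}-u_R(\rho)=\int_R^\rho(s-R^2/s)^{\frac1{p-1}}ds$ near $\rho=R$, and using $s-R^2/s=2(s-R)-(s-R)^2/R+\dots$, we expect
\begin{equation*}
\psi_o(u)-R=2^{-\frac1p}\Big(\frac p{p-1}\Big)^{\frac{p-1}p}(u_{\rm max}-u)^{\frac{p-1}p}+C_o\,\frac1R\,(u_{\rm max}-u)^{2\frac{p-1}p}+o\big((u_{\rm max}-u)^{2\frac{p-1}p}\big).
\end{equation*}
The constant $C_o$ should have the same modulus as in Lemma \ref{espansione phi}, with the sign dictated by the orientation.

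\textbf{Step 2: first derivative of $V_R$.} We plug in the expansion \eqref{ass_esp} with the coefficients of Proposition \ref{lemma7.1}. These coefficients hold verbatim here, since that proof is local at $\Sigma$ and uses only the equation together with $k$, the curvature with respect to the exterior normal of $\Omega$. This gives
\begin{equation*}
\partial_t(\psi_o\circ u)\big|_{t=0}=1,\qquad \partial_tV_R\big|_{t=0}=2\cdot1=2.
\end{equation*}

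\textbf{Step 3: second derivative of $V_R$.} Differentiating $V_R=G(\psi_o\circ u)$ with $G(\rho)=\rho-R^2/\rho$ gives
\begin{equation*}
\partial_t^2V_R\big|_{t=0}=G''(R)\big(\partial_t(\psi_o\circ u)\big)^2+G'(R)\,\partial_t^2(\psi_o\circ u)\big|_{t=0}=-\frac{2}{R}+2\,\partial_t^2(\psi_o\circ u)\big|_{t=0}.
\end{equation*}
To compute $\partial_t^2(\psi_o\circ u)=\psi_o''(u)(\partial_tu)^2+\psi_o'(u)\,\partial_t^2u$, we proceed in order:
\begin{itemize}
\item differentiate the expansion of Step 1 twice in $u$;
\item take $\partial_tu$ and $\partial_t^2u$ from \eqref{ass_esp}, with $a$, $b$ as in \eqref{eq:coeff_expl}; the $o_1$ remainder convention is exactly what justifies this;
\item check that the divergent $\pm\frac{1}{p-1}t^{-1}$ terms cancel, as they do in Proposition \ref{prop stima VR}.
\end{itemize}
The expected outcome is
\begin{equation*}
\partial_t^2(\psi_o\circ u)\big|_{t=0}=\frac{1}{2p-1}k+\frac{1}{2p-1}\frac1R,
\end{equation*}
which gives
\begin{equation*}
\partial_t^2V_R\big|_{t=0}=\frac{2}{2p-1}k-\frac{2}{R}+\frac{2}{(2p-1)R}=\frac{2}{2p-1}k-\frac{4(p-1)}{(2p-1)R}.
\end{equation*}
Taylor's formula then yields the claimed expansion.

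\textbf{Main obstacle.} The delicate point is the sign bookkeeping in Step 3. Three orientation effects enter:
\begin{itemize}
\item $\psi_o$ is now increasing in $t$;
\item $k$ is measured with respect to the exterior normal of $\Omega_{\rm o}$, which on $\Sigma$ points toward $\Omega_{\rm i}$;
\item $G''(R)<0$.
\end{itemize}
These must combine so that the $k$-coefficient stays $\frac{1}{2p-1}$ while the $\frac1R$ coefficient flips. I would verify this by checking the radial case $u=u_R$, where the true $V=V_R$ and Proposition \ref{lemma7.1} gives coefficient $k$ with $k=-1/R$. Indeed $\frac{1}{2p-1}\big(-\frac1R\big)-\frac{2(p-1)}{2p-1}\frac1R=-\frac1R$, which confirms the signs.
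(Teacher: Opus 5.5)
Your proposal is correct and follows the paper's route: the paper omits this proof as the analogue of Proposition \ref{prop stima VR}, and your values $G'(R)=2$, $G''(R)=-2/R$, $\partial_t(\psi_o\circ u)|_{t=0}=1$ and $\partial_t^2(\psi_o\circ u)|_{t=0}=\frac{1}{2p-1}\big(k+\frac1R\big)$ are the right ones (the constant $C_o$ of your Step 1 is exactly the positive constant of Lemma \ref{espansione phi}, sign included, and this is what produces the $+\frac{1}{(2p-1)R}$). One caution: the $o_1$ convention controls $\partial_t$, $\partial_\theta$ and $\partial_t\partial_\theta$ of the remainder but not $\partial_t^2$, so rather than computing $\partial_t^2u$ it is cleaner to compose the zeroth-order expansion of $u_{\rm max}-u$ from \eqref{ass_esp} with the expansion of $\psi_o$ and with $G(R+h)=2h-h^2/R+O(h^3)$, which gives $\Psi=R+t+\frac{1}{2(2p-1)}\big(k+\frac1R\big)t^2+o(t^2)$ and hence the claimed $t^2$-coefficient without any second derivatives of $u$.
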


%------------------------------------------------------------------

\section{The rigidity results}\label{sez9}

We are now in the position to prove our main rigidity results. 

\proof[Proof of Theorem \ref{teo rigidità a<b e a>b}]
We prove the result in the case $a<u<b$ (the case $b<u<a$ can be proven in the same way). This is the case where $\nabla u$ does not vanish on $\partial\Om$, so it is simpler. In what follows, let $r_{i}$ and $r_o$ be the comparison parameters associated to the solution $(\Omega,u)$. We apply the divergence theorem, since $-\Delta_p u=2$, we get
\begin{align*}
\int_{\Om} \frac{2}{R^2-\Psi^2}d\mu&=-\int_{\Om} \frac{\Delta_p u}{R^2-\Psi^2}d\mu\\
&=\int_{\Om}\langle\,\nabla\left(\frac{1}{R^2-\Psi^2}\right)\, ,\, |\nabla u|^{p-2}\nabla u\,\rangle\,d\mu-\int_{\partial{\Om}}\frac{ |\nabla u|^{p-2}\langle\nabla u,\nu\rangle}{R^2-\Psi^2}\,d\sigma\\
&=\int_{\Om}\frac{2\Psi^\frac{p}{p-1}}{(R^2-\Psi^2)^\frac{2p-1}{p-1}}
|\nabla u|^p d\mu+\int_{\Gamma_{\rm i}}\frac{|\nabla u|^{p-1}}{R^2-\Psi^2}\,d\sigma-\int_{\Gamma_{\rm o}}\frac{|\nabla u|^{p-1}}{R^2-\Psi^2}\,d\sigma
\end{align*}
where $\nu$ denotes the exterior unit normal to $\partial\Omega$. Recalling that $W=|\nabla u|^p$ and $W_R=\left(\frac{R^2-\Psi^2}{\Psi}\right)^\frac{p}{p-1}$ we get
$$\int_{\Om}\frac{2}{R^2-\Psi^2}-\frac{2\Psi^\frac{p}{p-1}}{(R^2-\Psi^2)^\frac{2p-1}{p-1}}|\N u|^p\,d\mu\,=\,\int_{\Omega}\frac{2\Psi^\frac{p}{p-1}}{(R^2-\Psi^2)^\frac{2p-1}{p-1}}(W_R-W)\,d\mu.$$
We observe that $\frac{|\nabla u|^{p-1}}{R^2-\Psi^2}=\frac{1}{\Psi}\left(\frac{W}{W_R}\right)^\frac{p-1}{p}$ and by construction $W=W_R$ on $\Gamma_{\rm i}$ and $\Gamma_{\rm o}$, so 
\begin{equation}\label{ug proof rig non null}
\int\limits_{\Omega}\frac{2\Psi^\frac{p}{p-1}}{(R^2-\Psi^2)^\frac{2p-1}{p-1}}(W_R-W)\,d\mu\,=\,\frac{|\Gamma_{\rm i}|}{r_i}-\frac{|\Gamma_{\rm o}|}{r_o}.  
\end{equation}
Since by Theorem \ref{stime gradiente} it holds $W_R-W\ge0$ in $\Omega$, from the previous identity we obtain that 
$$\frac{|\Sigma_{\rm i}|}{r_i}-\frac{|\Sigma|}{R}\ge0.$$
Now, we observe that by Proposition \ref{prop length bnds Gamma_i e out}, we have 
$$\frac{|\Sigma_{\rm i}|}{r_i}-\frac{|\Sigma|}{R}\le0,$$
hence equality holds. Coming back to \eqref{ug proof rig non null}, this implies that $W\equiv W_R$ in $\Omega$, so using Theorem \ref{stime gradiente} we can conclude that our solution $(\Omega,u)$ is equivalent to $(\Omega_R,u_R)$.
\endproof

Next we deal with the rigidity result for a solution to problem \eqref{sist interno}, or problem \eqref{sist esterno}, namely the case where either $\alpha=0$ or $\beta=0$.

\begin{theorem}\label{teo rigidità caso beta nullo}
Let $(\Omega,u)$ be a solution to problem \eqref{sist interno}, or a solution to problem \eqref{sist esterno}, in this case assume also $M<\frac{p-1}{p}|\beta|^p$. Assume that the asymptotic expansion \eqref{ass_esp} holds for $u$ at $\Sigma$.

Then, $\Om$ is a circular annular domain, and the solution $u$ is rotationally symmetric.
\end{theorem}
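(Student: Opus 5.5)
The plan follows the scheme of the proof of Theorem \ref{teo rigidità a<b e a>b}. The difference is that on $\Sigma$ the boundary term must be computed through the asymptotics of Section \ref{sez8}, since $|\nabla u|$ vanishes there. I treat the inner problem \eqref{sist interno} in detail; the outer one is symmetric.

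\textbf{Setting up the model and the integral identity.} Corollary \ref{cor cond poho} and Corollary \ref{lemma a<b caso beta=0} give comparison parameters $r_i<R$ and the pseudo-radial function $\Psi=\psi_i(u)$, with $\Psi=r_i$ on $\Sigma_{\rm i}$ and $\Psi\to R$ at $\Sigma$. The expansion \eqref{ass_esp} implies the reverse {\L}ojasiewicz inequality (Remark \ref{II implica no bnd su p}). Hence Theorem \ref{stime gradiente beta=0} applies and yields $W\le W_R$ in $\Omega$, with rigidity in case of equality at one point.

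Since $1/(R^2-\Psi^2)$ blows up at $\Sigma$, I would apply the divergence computation of the proof of Theorem \ref{teo rigidità a<b e a>b} on $\Omega_\varepsilon:=\Omega\cap\{t>\varepsilon\}$. This gives
\[
\int_{\Omega_\varepsilon}\frac{2\Psi^{\frac p{p-1}}}{(R^2-\Psi^2)^{\frac{2p-1}{p-1}}}(W_R-W)\,d\mu=\frac{|\Sigma_{\rm i}|}{r_i}-\int_{\{t=\varepsilon\}}\frac{1}{\Psi}\Big(\frac{V}{V_R}\Big)\langle \tfrac{-\nabla u}{|\nabla u|},\nu_\varepsilon\rangle\, d\sigma .
\]
Here I used $|\nabla u|^{p-1}/(R^2-\Psi^2)=\Psi^{-1}V/V_R$ and $W=W_R$ on $\Sigma_{\rm i}$. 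The left-hand integrand is nonnegative, so monotone convergence lets $\varepsilon\to0$ on that side.

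\textbf{Boundary term at $\Sigma$ (main obstacle).} Propositions \ref{lemma7.1} and \ref{prop stima VR} give $V/V_R\to1$ uniformly in $\theta$ as $t\to0^+$, and $\Psi\to R$. The $o_1$ control in \eqref{ass_esp} gives $u_\theta/u_t=O(t)$, so $\nabla u/|\nabla u|\to-\nabla t$ and the angle factor tends to $1$. The length of $\{t=\varepsilon\}$ tends to $|\Sigma|$ by smoothness of $\Sigma$. Therefore the boundary integral converges to $|\Sigma|/R$, and
\[
0\le\int_{\Omega}\frac{2\Psi^{\frac p{p-1}}}{(R^2-\Psi^2)^{\frac{2p-1}{p-1}}}(W_R-W)\,d\mu=\frac{|\Sigma_{\rm i}|}{r_i}-\frac{|\Sigma|}{R}.
\]
The point needing care is that these limits are uniform in $\theta$. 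This is precisely what the expansion \eqref{ass_esp} and the non-degeneracy $a(\theta)<0$ from Proposition \ref{lemma7.1} provide.

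\textbf{Conclusion.} The curvature argument of Proposition \ref{prop length bnds Gamma_i e out} on $\Sigma_{\rm i}$ uses only $\alpha\neq0$ and the gradient estimate, so it still gives $|\Sigma_{\rm i}|\le2\pi r_i$. Proposition \ref{prop bound derivata norm nulla} gives $|\Sigma|\ge2\pi R$. Thus the right-hand side is $\le0$, hence equal to $0$. Since the weight is positive in $\Omega$, this forces $W\equiv W_R$, and the rigidity part of Theorem \ref{stime gradiente beta=0} shows that $(\Omega,u)$ is the model up to translation.

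For \eqref{sist esterno} the argument is the same, with the following changes. We use $\Psi=\psi_o(u)$ and the parameters of Corollary \ref{lemma a>b alfa=0}; this is where $M<\frac{p-1}p|\beta|^p$ is needed. The gradient estimate is Theorem \ref{stime gradiente alfa=0}. The expansion is Proposition \ref{prop stima VR esterna}, and the length bounds are $|\Sigma|\le2\pi R$ and $|\Gamma_{\rm o}|\ge2\pi r_o$. The resulting identity is $0\le|\Sigma|/R-|\Gamma_{\rm o}|/r_o\le0$, and the conclusion follows in the same way.
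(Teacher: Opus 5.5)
Your argument for \eqref{sist interno} is fine and is essentially the paper's Case 1. Truncating along $\{t=\varepsilon\}$ instead of along level sets of $u$ only costs you the angle factor, which you handle. The outer case, however, is not symmetric: your claimed inequality $0\le |\Sigma|/R-|\Gamma_{\rm o}|/r_o$ has the wrong sign.

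Redo the computation in $\Omega_{\rm o}$ with $h=1/(\Psi^2-R^2)$ and $\Psi=\psi_o(u)$. Since $\dot\psi_o<0$, $\nabla h$ is again a positive multiple of $\nabla u$. Moreover $2h=\omega W_R$, with $\omega=2\Psi^{\frac p{p-1}}(\Psi^2-R^2)^{-\frac{2p-1}{p-1}}>0$. On the zero level $\Sigma_{\rm o}$ one has $\partial_\nu u=-|\nabla u|$, while on the truncating curve near $\Sigma$ one has $\partial_\nu u=+|\nabla u|$, exactly as in the inner case. Hence
\[
\int_{\Omega}\omega\,(W_R-W)\,d\mu=\frac{|\Sigma_{\rm o}|}{r_o}-\frac{|\Sigma|}{R}.
\]
In both cases the zero-level component enters with a plus sign and $\Sigma$ with a minus sign. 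Theorem~\ref{stime gradiente alfa=0} then gives $|\Sigma_{\rm o}|/r_o-|\Sigma|/R\ge0$. But the length bounds $|\Sigma_{\rm o}|\ge 2\pi r_o$ and $|\Sigma|\le 2\pi R$ give the \emph{same} inequality, so nothing is forced.

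In the inner case the argument closes only because there the bounds go the other way: $|\Sigma_{\rm i}|\le 2\pi r_i$ and $|\Sigma|\ge 2\pi R$. Changing the weight does not help either. Among functions of $u$, $h$ is (up to a constant factor) the only one that produces an identity of the form $\int(\cdot)(W_R-W)$.

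This is precisely the obstruction the paper points out at the start of Case 2. The missing idea is an integral inequality with the opposite orientation, which the paper builds in four steps:
\begin{enumerate}
\item It starts from the Pohozaev identity \eqref{POHOZAEV} and chooses an auxiliary $\varphi(u)$ solving \eqref{ode fi}.
\item After one integration by parts, $\int_\Omega(4u-\frac{2-p}{p}W)\,d\mu$ becomes $\int_\Omega(\dot\varphi-\frac{2-p}{p})(W-W_R)\,d\mu$ plus boundary terms.
\item It bounds the area $|A_{\rm int}|$ enclosed by $\Sigma$ through the isoperimetric inequality and $|\Sigma|\le 2\pi R$, namely $|A_{\rm int}|\le |\Sigma|^2/(4\pi)\le R|\Sigma|/2$. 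Here $u_{\max}>\frac{p-1}{2p}|\beta|^p$ makes the coefficient of $|A_{\rm int}|$ negative, so this upper bound is usable.
\item It uses the flux identity $2|\Omega|=|\beta|^{p-1}|\Sigma_{\rm o}|$ and calibrates the constants on the model.
\end{enumerate}
The result is
\[
0\ge\int_\Omega\Big(\dot\varphi-\frac{2-p}{p}\Big)(W-W_R)\,d\mu\ge r_oS\,\Big(\frac{|\Sigma_{\rm o}|}{r_o}-\frac{|\Sigma|}{R}\Big)\ge0,
\]
with $r_oS\ge0$ and $\dot\varphi>\frac{2-p}{p}$ for a suitable integration constant. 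Now the length bounds point the right way, and $W\equiv W_R$ follows, after which Theorem~\ref{stime gradiente alfa=0} gives the symmetry.
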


\proof
\textbf{Case 1: $(\Omega,u)$ is a solution to problem \eqref{sist interno}.}
Since $\nabla u$ vanishes on the boundary of $\Omega$ (in particular, it vanishes on $\Sigma$), we apply the divergence theorem in $\Om_\delta:=\Om\setminus\{x\in\Om\,|\,u(x)>u_{\rm max}-\delta\}$, and proceeding as in the previous proof, we get
$$\int_{\Om_\delta} \frac{2}{R^2-\Psi^2}d\mu=\int_{\Om_\delta}\frac{2\Psi^\frac{p}{p-1}}{(R^2-\Psi^2)^\frac{2p-1}{p-1}}
|\nabla u|^p d\mu+\int_{\Sigma_{\rm i}}\frac{|\nabla u|^{p-1}}{R^2-\Psi^2}\,d\sigma-\int_{\Sigma_\delta}\frac{|\nabla u|^{p-1}}{R^2-\Psi^2}\,d\sigma$$
where $\nu$ denotes the exterior unit normal to $\partial\Omega_\delta$, and $\Sigma_\delta:=\{x\in\Omega| u(x)=u_{\rm max}-\delta\}$. We get
$$\int_{\Omega_\delta}\frac{2}{R^2-\Psi^2}-\frac{2\Psi^\frac{p}{p-1}}{(R^2-\Psi^2)^\frac{2p-1}{p-1}}|\N u|^p\,d\mu\,=\,\int_{\Omega_\delta}\frac{2\Psi^\frac{p}{p-1}}{(R^2-\Psi^2)^\frac{2p-1}{p-1}}(W_R-W)\,d\mu.$$
hence
$$\int\limits_{\Omega_\delta}\frac{2\Psi^\frac{p}{p-1}}{(R^2-\Psi^2)^\frac{2p-1}{p-1}}(W_R-W)\,d\mu\,=\,\frac{|\Gamma_{\rm i}|}{r_i}-\int_{\Sigma_\delta}\frac{|\nabla u|^{p-1}}{R^2-\Psi^2}\,d\sigma,$$
Moreover, exploiting Proposition \ref{lemma7.1} and Proposition \ref{prop stima VR}, we have
$$\frac{|\nabla u|^{p-1}}{R^2-\Psi^2}=\frac{1}{\Psi}\frac{V}{V_R}\to\frac{1}{\Psi}=\frac{1}{R}, \qquad\text{as }\delta\to0^+,$$
hence in the limit as $\delta\to0^+$
\begin{equation}\label{formula w lungh}
\int\limits_{\Omega}\frac{2\Psi^\frac{p}{p-1}}{(R^2-\Psi^2)^\frac{2p-1}{p-1}}(W_R-W)\,d\mu\,=\,\frac{|\Sigma_{\rm i}|}{r_i}-\frac{|\Sigma|}{R}.
\end{equation}
Since by Theorem \ref{stime gradiente beta=0} it holds $W_R-W\ge0$ in $\Omega$, from \eqref{formula w lungh} we obtain that 
$$\frac{|\Sigma_{\rm i}|}{r_i}-\frac{|\Sigma|}{R}\ge0.$$
Notice that the assumptions of Theorem \ref{stime gradiente beta=0} are in force because the asymptotic expansion \eqref{ass_esp} implies that a 
$\frac{1}{p^2}$-reverse {\L}ojasiewicz inequality holds for $u$
at $\Gamma_{\rm o}=\Sigma$.
Now, we observe that, by Proposition \ref{prop length bnds Gamma_i e out} and Proposition \ref{prop bound derivata norm nulla}, $$\frac{|\Sigma_{\rm i}|}{r_i}-\frac{|\Sigma|}{R}\le0,$$
so that equality holds.
Coming back to \eqref{formula w lungh}, this implies that $W\equiv W_R$ in $\Omega$, so using Theorem \ref{stime gradiente beta=0} we can conclude that our solution $(\Omega,u)$ is equivalent to $(\Omega_R,u_R)$.

\textbf{Case 2: $(\Omega,u)$ is a solution to problem \eqref{sist esterno}.}
We obtain the same rigidity result also in the `external' case.
Notice that the argument employed to prove the result in Case 1 does not work anymore now, since we cannot obtain two opposite inequalities as before; we overcome such difficulty, taking advantage of the previous Pohozaev identity and the isoperimetric inequality. 
By plugging the boundary conditions into the Pohozaev identity \eqref{POHOZAEV}, we get the following identity 
\begin{equation}\label{poho 2}
\int\limits_{\Omega}\left(4u-\frac{2-p}{p}W\right)d\mu=
2\,\frac{p-1}{p}|\beta|^p|A|-4u_{\rm max}|A_\mathrm{int}|.
\end{equation}
Where $\Omega=A\setminus A_{\rm int}$ and $\partial A=\Sigma_{\rm o}$, $\partial A_{\rm int}=\Sigma$.
We proceed by writing the RHS of the previous equality more suitably for our purposes. Let us consider the function $\varphi$
such that 
\begin{equation}\label{ode fi}
\frac{2\varphi-4u}{\dot\varphi-\frac{2-p}{p}}=W_R.
\end{equation}
Solving the ODE \eqref{ode fi}, we get that the function $\varphi$ has the following form
\begin{equation}\label{espressione phi}
\f(u)=2u+\frac{3p-2}{p}\,\frac{1}{\Psi^2-R^2}\,\left(\int_{R}^{\psi(u)}\rho^{-\frac{1}{p-1}}(\rho^2-R^2)^\frac{p}{p-1}\,d\rho+c\right)
\end{equation}
where $k$ is a constant we will fix later.
Using the divergence theorem in the LHS of \eqref{poho 2}, we obtain
\begin{align}\notag
\int\limits_{\Omega}\bigg(4u-&\frac{2-p}{p}W\bigg) d\mu =\int\limits_{\Omega}2\varphi+4u-2\varphi-\frac{2-p}{p}W d\mu\\\notag
&=\int\limits_{\Omega}\dot\f\,W+4u-2\varphi-\frac{2-p}{p}W d\mu-\int\limits_{\partial\Om}\f|\nabla u|^{p-2}\langle\nabla u,\nu\rangle d\sigma\\ \label{rhs poho2}
&=\int\limits_{\Omega}\left(\dot\f-\frac{2-p}{p}\right)\left(W-\frac{2\f-4u}{\dot\f-\frac{2-p}{p}}\right)d\mu+ |\beta|^{p-1}\f_o|\Sigma_{\rm o}|
\end{align}
where by $\f_o$ we denote the value of $\f$ on $\Sigma_{\rm o}$. Finally, from \eqref{poho 2} and \eqref{rhs poho2} and using the fact that $|A|=|\Omega|+|A_{\rm int}|$, we get that
$$\int\limits_{\Omega}\left(\dot\f-\frac{2-p}{p}\right)\left(W-W_R\right)d\mu
=4\left(\frac{p-1}{2p}|\beta|^p-u_{\rm max}\right)|A_{\rm int}|
+2\frac{p-1}{p}|\beta|^p|\Omega|-|\beta|^{p-1}\f_o|\Sigma_{\rm o}|.$$
In what follows, we use the following relations: $|A_{\rm int}|\le \frac{R\,|\Sigma|}{2}$ which comes from the isoperimetric inequality and Proposition \ref{prop bound derivata norm nulla}, 
and $2|\Om|=|\beta|^{p-1}|\Sigma_{\rm o}|$ which is a consequence of the divergence theorem together with the fact that $-\Delta_p u=2$. Recalling that $u_{\rm max}>\left(\frac{p-1}{2p}\right)|\beta|^p$ (see Corollary \ref{cor cond poho}) we obtain
\begin{equation}\label{ineq tri+sro}
\int\limits_{\Omega}\Big(\dot\f-\frac{2-p}{p}\Big)\left(W-W_R\right)d\mu\ge\, T\,|\Sigma|\,+\,S\,|\Sigma_{\rm o}|
\end{equation}
where 
$$T:=\left[\Big(2u_{\rm max}-\frac{p-1}{p}|\beta|^p\Big)R\right]$$
and
$$S:=\left[|\beta|^{p-1}\Big(\frac{p-1}{p}|\beta|^p-\f_o\Big)\right].$$
Now, we observe that if $u$ is a ring-shaped model solution, we have an identity in \eqref{ineq tri+sro} (note that $|A_{\rm int}|= \frac{R\,|\Sigma|}{2}$ in the rigid case), and moreover $W=W_R$, which yields that 
$$T\,R\,+\,S\,r_o=0.$$
Since the values of both $T$ and $S$ do not depend on the choice of a model solution, we can use this relation in the general framework, where we do not know a priori if $u$ is a ring-shaped solution. In particular, we have that
$$\int\limits_{\Omega}\Big(\dot\f-\frac{2-p}{p}\Big)\left(W-W_R\right)d\mu\ge\, T\,|\Sigma|\,+\,S\,|\Sigma_{\rm o}|\,=\, \left(\frac{|\Sigma_{\rm o}|}{r_o}-\frac{|\Sigma|}{R}\right)r_o S.$$
If we are able to find a function $\f$ of the form \eqref{espressione phi} such that 
\begin{itemize}
    \item[i)] $\dot\f-\frac{2-p}{p}>0$
    \item[ii)] $r_o S\ge0$
\end{itemize} 
then we are done, since it holds
$$0\ge \int\limits_{\Omega}\Big(\dot\f-\frac{2-p}{p}\Big)\left(W-W_R\right)d\mu\ge \left(\frac{|\Sigma_{\rm o}|}{r_o}-\frac{|\Sigma|}{R}\right)r_o S \ge 0.$$
This implies that $W=W_R$ in $\Om$ and by Theorem \ref{stime gradiente alfa=0} we get the rotational symmetry of the solution. In order to conclude the proof, we have to verify that there exists $k\in\R$ such that $\f$ satisfies both $i)$ and $ii)$. 
By computing explicitly the derivative of $\f(u)$ we observe that i) holds if and only if
\begin{equation}\label{cond i}
c\ge-\int_{R}^{\psi(u)}\rho^{-\frac{1}{p-1}}(\rho^2-R^2)^\frac{p}{p-1}\,d\rho.
\end{equation}
On the other hand, exploiting the fact that $TR=-Sr_o$, that is 
$$\frac{p-1}{p}|\beta|^p r_o^2-2u_{\rm max}R^2=\frac{3p-2}{p}\int_{R}^{r_o}\rho^{-\frac{1}{p-1}}(\rho^2-R^2)^\frac{p}{p-1}\,d\rho,$$
we have that ii) is verified if and only if
$$c\le \frac{p}{3p-2}R^2\left[-\frac{p-1}{p}|\beta|^p+2u_{\rm max}\right]=:Q.$$
Since $u_{\rm max}>\left(\frac{p-1}{2p}\right)|\beta|^p$, then 
$Q>0.$ A suitable choice for $c$ proves to be $c=0$, which concludes the proof.
\endproof

\begin{obs}\label{no isl}
{\rm 
One can actually prove a sort of No-Island-type result.
More specifically, under the assumptions of Theorem \ref{TEOREMA rigidita}, if $(\Omega,u)$ solves problem \eqref{nostro problema}, then the curve $\Sigma$ divides the domain $\Omega$ into two ring-shaped domains, that is
$$\Omega=\Omega_{\rm i}\sqcup\Sigma\sqcup\Om_{\rm o},$$ 
where $\Omega_{\rm i}$ denotes the inner region of $\Om$ having boundary $\partial\Om_{\rm i}=\Sigma\sqcup\Sigma_{\rm i}$, and $\Om_{\rm o}$ the outer one such that $\partial\Om_{\rm o}=\Sigma\sqcup\Sigma_{\rm o}$. 
The sketch of the proof is the following. Let us assume by contradiction that the smooth closed curve $\Sigma$ defines a region $\Omega'$ which does not contain the inner boundary component $\Sigma_\mathrm{i}$, then we have
$$\begin{cases}
    \Delta_p u=-2\qquad&\text{in }\Omega'\\
    u=u_\mathrm{max} &\text{on }\Sigma.
\end{cases}$$
Let us consider $v_1=u_\mathrm{max}$ and $v_2=u$, clearly $v_1$ is a $p$-harmonic function while $v_2$ is a weak supersolution to the $p$-Laplacian, we can apply a Comparison Principle (see e.g. \cite[Theorem 2.15]{Lindq}) obtaining that $u_\mathrm{max}\le u$ in $\Omega'$, hence it follows that $u\equiv u_\mathrm{max}$ in $\Omega'$, which gives a contradiction. We have thus proven that if $\Sigma$ is a nontrivial closed curve, then it divides the domain $\Omega$ into two annular-type subdomains (the closure of one containing the inner boundary component $\Sigma_\mathrm{i}$ and the closure of the other one containing the outer one $\Sigma_\mathrm{o}$). Finally, self-intersections of the curve $\Sigma$ can be ruled out by invoking a maximum principle.
}
\end{obs}

\proof[Proof of Theorem \ref{TEOREMA rigidita}]
As pointed out in Remark \ref{no isl}, the curve $\Sigma$ divides the domain $\Omega$ into two ring-shaped domains: the inner region $\Omega_{\rm i}$ and the outer one $\Om_{\rm o}$.
Since $|\nabla u|$ is locally constant on $\partial\Om$, without loss of generality, we can assume that $\partial_\nu u=\alpha$ on $\Sigma_{\rm i}$ and $\partial_\nu u=\beta$ on $\Sigma_{\rm o}$, where $\alpha$ and $\beta$ real constants. 

Notice that the asymptotic expansion \eqref{ass_esp}allows us to apply Theorem \ref{teo rigidità caso beta nullo} and Theorem \ref{teo2.1NEW}, in view of Remark \ref{II implica no bnd su p}.

Since $(\Om,u)$ is a solution to problem \eqref{nostro problema}, then $u_{|\Om_{\rm i}}$ solves system \eqref{sist interno} when $M=u_{\rm max}$. We can apply Theorem \ref{teo rigidità caso beta nullo}, and we obtain that $(\Om_{\rm i},u_{|\Om_{\rm i}})$ is rotationally symmetric. We claim that $u_{\rm max}<\frac{p-1}{p}|\nabla u|^p$ on $\Sigma_{\rm o}$. 
Indeed, if $u_{\rm max}\ge\frac{p-1}{p}|\nabla u|^p$ on $\Sigma_{\rm o}$, then from Theorem \ref{teo2.1NEW} we obtain that $\Omega$ must be a disk, which yields a contradiction. Under the condition $u_{\rm max}<\frac{p-1}{p}|\nabla u|^p$ on $\Sigma_{\rm o}$, we can apply Theorem \ref{teo rigidità caso beta nullo}, proving that the solution is radially symmetric. We finally observe that the two portions of radial solutions we have found can be glued, and in particular, up to translations and rescalings, the solution has the form \eqref{model_annular_slus}-\eqref{model annular domain}.
\endproof

%---------------------------------------------------------------------

\begin{ackn}

The authors would like to thank S.Borghini, L. Mazzieri, and A. Pinamonti for useful discussions during the preparation of this work. They are also grateful to E. Pagani and A. Valli for valuable suggestions concerning the derivation of the equation from a laminar flow model and for bringing the useful reference \cite{SerrinBook} to their attention.
The authors are members of the Gruppo Nazionale per l'Analisi Matematica, la Probabilit\`a e le loro Applicazioni (GNAMPA), which is part of the Istituto Nazionale di Alta Matematica (INdAM). They gratefully acknowledge the support by the MUR PRIN-2022JJ8KER grant ``Contemporary perspectives on geometry and gravity''. C.B. has been partially funded by the GNAMPA project CUPE5324001950001 and MUR Department of Excellence project MatMod@TOV.
\end{ackn}

%------------------------------------------------------------------

\bibliographystyle{acm}

\end{document}